\documentclass[10pt,oneside]{amsart}
\linespread{1.2}

\usepackage{latexsym}
\usepackage{amsfonts,amsmath,amssymb,indentfirst,color,tikz}
\usepackage[english]{babel}
\usepackage[all]{xy}
\usepackage[pdftex]{hyperref}

\newcommand{\bdism}{\begin{displaymath}}
\newcommand{\edism}{\end{displaymath}}
\newcommand{\cc}{\mathbb{C}}

\newcommand{\qq}{\mathbb{Q}}
\newcommand{\zz}{\mathbb{Z}}

\newcommand{\pp}{\mathbb{P}}

\DeclareMathOperator{\PU}{\mathrm{PU}}

\newtheorem{theorem}{Theorem}[section]
\newtheorem{proposition}[theorem]{Proposition}
\newtheorem{corollary}[theorem]{Corollary}
\newtheorem{lemma}[theorem]{Lemma}
\newtheorem{remark}[theorem]{Remark}

\newtheorem{fact}[theorem]{Fact}
\newtheorem*{claim}{Claim}

\setcounter{tocdepth}{1}

\address{Mathematics Section - 11 Strada Costiera, ICTP, Trieste 34151, Italy} \email{ldicerbo@ictp.it}
\address{Department of Mathematics 1805 N. Broad Street, Philadelphia, PA 10122, USA} \email{mstover@temple.edu}

\author{Luca F. Di Cerbo, Matthew Stover}

\title{\bf Classification and arithmeticity of toroidal compactifications with $3\overline{c}_{2}=\overline{c}^{2}_{1} = 3$}

\begin{document}

\begin{abstract}
We classify the minimum volume smooth complex hyperbolic surfaces that admit smooth toroidal compactifications, and we explicitly construct their compactifications. There are five such surfaces and they are all arithmetic, i.e., they are associated with quotients of the ball by an arithmetic lattice. Moreover, the associated lattices are all commensurable. The first compactification, originally discovered by Hirzebruch, is the blowup of an Abelian surface at one point. The others are bielliptic surfaces blown up at one point. The bielliptic examples are new, and are the first known examples of smooth toroidal compactifications birational to bielliptic surfaces.
\end{abstract}
\maketitle

\tableofcontents

\markboth{Right}{\textbf{CLASSIFICATION OF TOROIDAL COMPACTIFICATIONS}}

\section{Introduction}

\pagenumbering{arabic}

A classical and important problem in algebraic geometry is to classify surfaces of general type with given numerical invariants. For some aspects of this fascinating and long-standing problem we refer to the recent survey \cite{BCP}. A situation that has attracted much recent interest is the case $c_1^2 = 3 c_2 = 9$, where $c_1^2$ is the self-intersection of the canonical bundle and $c_2$ the Euler number of the underlying surface. This problem is particularly interesting because of its connection with low-dimensional geometry. In fact, Hirzebruch proportionality, Yau's solution to the Calabi conjecture \cite{Yau} and the work of Miyaoka \cite{Miyaoka} imply that this is equivalent to classifying the minimum volume quotients of the unit ball in $\cc^{2}$ by a torsion-free cocompact lattice in $\PU(2,1)$. Most famously, this includes the classification of fake projective planes by Prasad--Yeung \cite{Pra} and Cartwright--Steger \cite{Ste}. Recall that Mumford constructed the first example of a fake projective plane by $p$-adic uniformization and explicitly raised the problem of classifying them \cite{Mum}.

This paper considers the corresponding problem in the noncompact or logarithmic setting. For example, given $r, s$, one can ask for a classification of the smooth projective surfaces $X$ containing a normal crossings divisor $D$ for which the logarithmic Chern numbers satisfy
\[
\overline{c}^2_1(X, D) = r \quad\quad \overline{c}_2(X, D) = s,
\]
where $\overline{c}^2_1(X, D)$ is the self-intersection of the log canonical divisor $K_X + D$ and $\overline{c}_2(X, D)$ is the Euler number of $X \smallsetminus D$. We refer to \cite{Sakai} for the foundation of the theory of logarithmic surfaces. For some more recent results see \cite{Urzua}. In this setting, $\overline{c}^{2}_{1} = 3 \overline{c}_{2}$ now implies that $X \smallsetminus D$ is a noncompact finite volume quotient of the ball by a torsion-free lattice \cite{TianY}, and $X$ is then a smooth toroidal compactification of a ball quotient. In fact, Tian--Yau derive a logarithmic Bogomolov--Miyaoka--Yau inequality for surfaces of log-general type and then characterize the pairs that attain equality as smooth toroidal compactifications of ball quotients. 

In this paper, we classify all smooth toroidal compactifications with $\overline{c}^{2}_{1} = 3 \overline{c}_{2} = 3$. Equivalently, we classify the minimum volume complex hyperbolic surfaces that admit smooth toroidal compactifications. Our main result is the following.

\begin{theorem}\label{primo}
There are exactly five toroidal compactifications with $3\overline{c}_{2}=\overline{c}^{2}_{1} = 3$. One has underlying space an Abelian variety blown up at one point. The other four have underlying space a bielliptic surface blown up once. The associated lattices in $\PU(2,1)$ are arithmetic and commensurable.
\end{theorem}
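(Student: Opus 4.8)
The plan is to characterize these compactifications via the structure of the boundary divisor $D$ and the numerical constraints forced by $3\overline{c}_2 = \overline{c}_1^2 = 3$. By the Tian--Yau result quoted above, any such pair $(X, D)$ is a smooth toroidal compactification of a ball quotient, so $D$ is a disjoint union of smooth elliptic curves, each with negative self-intersection, and the open surface $X \smallsetminus D$ is a finite-volume noncompact ball quotient. My first step would be to extract sharp numerical bounds: since $\overline{c}_2 = 3$ equals the Euler characteristic of the open part, this is the minimal possible volume (by the cusped analog of the volume normalization), and it pins down the number of boundary components and their self-intersections. I would compute $c_1^2(X)$, $c_2(X)$, and the intersection numbers $D_i^2$ using the logarithmic-to-ordinary Chern number comparison $\overline{c}_1^2 = (K_X+D)^2$ and $\overline{c}_2 = c_2(X) - \chi(D) = c_2(X)$ (as $D$ is a union of elliptic curves with $\chi = 0$), together with adjunction $K_X \cdot D_i + D_i^2 = 0$ on each component.

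The second step is to identify the underlying surface $X$. Running the Enriques--Kodaira classification, I expect the numerical data to force $X$ to be the blowup at finitely many points of a minimal surface of Kodaira dimension zero, specifically an Abelian or bielliptic surface: blowing down the $(-1)$-curves meeting $D$ recovers a minimal model, and the presence of a disjoint union of elliptic curves of the right self-intersection severely restricts which minimal surfaces can occur. I would verify that $X$ is a one-point blowup (so $c_1^2$ drops by exactly one from the minimal model) and that the minimal model has $c_1^2 = 0$, $c_2 = 0$, $K$ numerically trivial, which among Kodaira-dimension-zero surfaces leaves Abelian and bielliptic surfaces as the candidates admitting such elliptic boundary configurations. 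Distinguishing the five cases then amounts to enumerating the possible bielliptic surfaces (there are finitely many types, indexed by the action of the finite group in the standard bielliptic construction) that carry a suitable elliptic curve to blow down, plus the single Abelian case recovering Hirzebruch's example.

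The final step, and the one I expect to be the main obstacle, is establishing arithmeticity and mutual commensurability of the five lattices in $\PU(2,1)$. Geometrically the five compactifications share the same numerical invariants and the same cusped volume, but that alone does not force commensurability; I would instead exhibit explicit common covers or a common arithmetic structure. The natural strategy is to realize all five ball quotients as quotients of a single arithmetic lattice -- presumably a Picard modular group or a closely related lattice defined over an imaginary quadratic field -- by examining the cusp cross-sections (nilmanifolds / Heisenberg quotients) and the precise étale or branched covering relations among the Abelian and bielliptic models. Matching the cusp data and the covering degrees to a known arithmetic lattice, and invoking Margulis-type or explicit-commensurator criteria to certify arithmeticity, is the technically delicate part: the bielliptic examples are new, so one cannot simply cite a prior identification, and care is needed to check that the finite quotient structure producing the bielliptic surfaces is compatible with an arithmetic lattice containing the Abelian example's lattice.
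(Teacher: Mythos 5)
Your outline of the classification half does follow the paper's strategy (reduce via Kodaira--Enriques to a one-point blowup of an Abelian or bielliptic surface, then enumerate), but note first a numerical slip: the hypothesis forces $\overline{c}_2 = 1$, not $3$, and the cusp analysis rests on $-\sum D_i^2 = 4$ coming from $K_X^2 = -1$ and adjunction. More importantly, the step you describe as ``enumerating the possible bielliptic surfaces that carry a suitable elliptic curve to blow down'' conceals the two decisive technical inputs without which you cannot get \emph{exactly} five. In the Abelian case the paper proves, via theta functions and a Gauss-type map, that an irreducible curve $C$ on an Abelian surface with a single ordinary $r$-fold point satisfies $C^2 \ge r(r-1)+1$; since every singular candidate in the list has $C^2 = r(r-1)$, this forces the four-cusp configuration of four smooth elliptic curves through one point, whose uniqueness (up to isomorphism, only $E_\rho \times E_\rho$ with slopes $0, \infty, 1, \zeta$ works) is then a separate lattice computation. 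In the bielliptic case one must eliminate five of the seven Bagnera--de Franchis types using the structure of $\mathrm{Num}(Y) = \zz\frac{1}{s}A \oplus \zz\frac{1}{t}B$ and divisibility constraints on the multiplicity of the singular point from the covering $E_\lambda \times E_\tau \to Y$, and then show each of $\zz/3\zz$ and $\zz/3\zz \times \zz/3\zz$ yields exactly two non-isomorphic cusped manifolds (with the same compactifying surface). None of this is routine bookkeeping, and your proposal gives no mechanism for it.

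The genuine gap is in your final step. The tools you name --- explicit common covers, matching cusp cross-sections, and ``Margulis-type or explicit-commensurator criteria'' --- will not close the argument: Margulis's criterion certifies arithmeticity only if you can independently show the commensurator is non-discrete, which is as hard as the original problem, and there is no \'etale covering relation among the five \emph{open} ball quotients to exploit (the covering relations among the Abelian and bielliptic models do not restrict to coverings of the complements of the boundary divisors). The paper's actual mechanism is a counting argument that sidesteps all of this: Holzapfel showed Hirzebruch's lattice has index $72$ in the Picard modular group $\Gamma = \PU(2,1;\zz[1,\rho])$; by the volume normalization, \emph{any} torsion-free index-$72$ subgroup of $\Gamma$ with rotation-free parabolics produces a smooth toroidal compactification with $\overline{c}_2 = 1$, hence one of the five classified pairs; the appendix of \cite{Stover} enumerates the eight torsion-free index-$72$ subgroups, and a Magma computation on Falbel--Parker's presentation shows exactly five have rotation-free parabolic subgroups. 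Since both lists have exactly five elements, they coincide, and commensurability plus arithmeticity follow at once. Without the prior enumeration in \cite{Stover} and the exact count of five from the classification, your proposed route has no way to certify that the four bielliptic lattices live in $\Gamma$ at all.
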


The blown up Abelian surface in Theorem \ref{primo} was first studied by Hirzebruch in \cite{Hir84}, and later shown to be the compactification of a Picard modular surface by Holzapfel \cite{Holzapfel}. The other four bielliptic examples are new, and are the first examples of smooth toroidal compactifications of ball quotients birational to bielliptic surfaces. All previously known examples not of general type (Hirzebruch's \cite{Hir84} and an additional example found by Holzapfel \cite{Holz04}) are birational to an Abelian surface. They are also the first completely explicit examples, in the sense that they are complements of a specific divisor on a given surface, not of Abelian type.

In contrast, we note that it is one of the central results in the study of toroidal compactifications that any lattice $\Gamma$ in $\PU(2,1)$ contains a subgroup $\Gamma^\prime$ of finite index such that the associated ball quotient admits a smooth toroidal compactification of general type \cite[Thm.\ IV.1.4]{Ash}. This result has been refined in \cite{Luca1} where it is shown that, up to finite covers, all such compactifications have ample canonical divisor. In fact, it was previously believed that any smooth toroidal compactification not of general type must be birational to an Abelian surface (see \cite{Momot}, which unfortunately contains a critical error), and our results explicitly refute this. For applications of the existence of bielliptic smooth toroidal compactifications to problems on volumes of complex hyperbolic manifolds and group-theoretic properties of lattices in $\PU(2, 1)$, see \cite{DS16}.

In light of the analytical results contained in \cite{TianY}, Theorem \ref{primo} implies the following result of interest for the geography of surfaces of log-general type.

\begin{corollary}
There are exactly five surfaces $(X, D)$ of log-general type satisfying $3\overline{c}_{2}=\overline{c}^{2}_{1} = 3$.
\end{corollary}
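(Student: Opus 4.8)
The plan is to deduce this directly from Theorem \ref{primo} by translating the hypothesis ``$(X,D)$ of log-general type with $3\overline{c}_2 = \overline{c}_1^2 = 3$'' into the hypothesis ``$(X,D)$ is a smooth toroidal compactification with $3\overline{c}_2 = \overline{c}_1^2 = 3$'', the bridge being the analytic results of Tian--Yau \cite{TianY} already invoked in the introduction. Once this translation is established the count is immediate, since Theorem \ref{primo} asserts there are exactly five such compactifications.

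The key input is the logarithmic Bogomolov--Miyaoka--Yau inequality of \cite{TianY}: for a pair $(X,D)$ of log-general type one has $\overline{c}_1^2(X,D) \le 3\,\overline{c}_2(X,D)$, and equality holds if and only if $X \smallsetminus D$ is a finite-volume quotient of the ball in $\cc^2$ by a torsion-free lattice in $\PU(2,1)$, with $X$ its smooth toroidal compactification and $D$ the compactifying divisor. I would first use the ``only if'' direction: any $(X,D)$ of log-general type satisfying $\overline{c}_1^2 = 3\overline{c}_2 = 3$ is forced to be a smooth toroidal compactification of a ball quotient with these invariants, hence one of the five objects classified in Theorem \ref{primo}. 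I would then use the ``if'' direction to check the converse, namely that each of the five compactifications produced by Theorem \ref{primo} does satisfy the corollary's hypotheses: on a smooth toroidal compactification of a ball quotient the divisor $K_X + D$ is big (indeed nef and big, the boundary $D$ being a disjoint union of elliptic curves), so each such pair is genuinely of log-general type and contributes to the count.

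The one point requiring care is that the correspondence be an honest bijection rather than merely a surjection, so that the count of five transfers exactly. Here I would note that the compactifying divisor $D$ is intrinsically attached to $X$: it is the boundary added in passing from the ball quotient to $X$, so the pair $(X,D)$ is determined by the toroidal compactification and conversely. Thus the five compactifications of Theorem \ref{primo} correspond to exactly five pairs $(X,D)$ of log-general type with $3\overline{c}_2 = \overline{c}_1^2 = 3$, and there are no others.

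I expect the genuine mathematical content to lie entirely in Theorem \ref{primo}; the corollary is a reformulation. Accordingly, the only real obstacle is conceptual rather than computational: one must be scrupulous in applying \emph{both} directions of the Tian--Yau equality characterization, since log-general type together with equality is what forces the toroidal structure, while the toroidal structure is what guarantees log-general type and the equality of log-Chern numbers. Verifying that these two descriptions of the relevant pairs coincide precisely, and that the boundary divisor is uniquely determined so that no pair is over- or under-counted, is the step I would treat most carefully.
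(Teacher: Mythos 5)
Your proposal is correct and matches the paper's own (essentially one-line) deduction: the paper invokes the Tian--Yau characterization of pairs of log-general type attaining equality in the logarithmic Bogomolov--Miyaoka--Yau inequality as precisely the smooth toroidal compactifications of ball quotients, and then applies Theorem \ref{primo}. Your more careful attention to both directions of that equivalence and to the determinacy of the boundary divisor is a reasonable elaboration of the same argument.
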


Theorem \ref{primo} has also the following corollary regarding the arithmeticity of the fundamental groups of minimum volume ball quotients. Recall that a noncompact ball quotient with finite volume admits a smooth toroidal compactification when the parabolic elements of its fundamental group have no rotational part (see \S \ref{prelim}).

\begin{corollary}\label{conjecture}
Let $\Gamma$ be a torsion-free lattice in $\PU(2,1)$ with minimum covolume. If the parabolic elements in $\Gamma$ have no rotational part, then $\Gamma$ is arithmetic.
\end{corollary}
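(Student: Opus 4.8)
The plan is to derive the corollary directly from Theorem \ref{primo} by showing that the minimum covolume hypothesis forces $\overline{c}_2 = 1$. The essential input is Hirzebruch proportionality: by Gauss--Bonnet, for any torsion-free lattice $\Gamma \subset \PU(2,1)$ the covolume is a fixed positive multiple of the topological Euler characteristic of the manifold $\B/\Gamma$, and with the standard normalization of the metric one has $\vol(\B/\Gamma) = \tfrac{8\pi^2}{3}\,e(\B/\Gamma)$ (the cusp contributions vanish because the boundary cross-sections are flat). Since $\Gamma$ is torsion-free, $\B/\Gamma$ is a smooth finite-volume complex hyperbolic surface homotopy equivalent to a finite complex, so $e(\B/\Gamma)$ is a positive integer. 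Minimizing the covolume is therefore the same as minimizing this integer, and I want to show its least value is $1$.

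First I would rule out the cocompact case. If $\B/\Gamma$ were compact it would be a smooth surface of general type with $c_1^2 = 3c_2$, so $\chi(\oo_{\B/\Gamma}) = \tfrac{1}{12}(c_1^2 + c_2) = \tfrac{c_2}{3}$; integrality of the holomorphic Euler characteristic forces $3 \mid c_2$, whence $e(\B/\Gamma) = c_2 \geq 3$. On the other hand, Theorem \ref{primo} exhibits noncompact torsion-free lattices with $e(\B/\Gamma) = \overline{c}_2 = 1$, whose covolume $\tfrac{8\pi^2}{3}$ is strictly smaller than the compact minimum $8\pi^2$. Consequently the global minimum covolume among torsion-free lattices equals $\tfrac{8\pi^2}{3}$, it is attained only by noncompact lattices, and every minimizer $\Gamma$ satisfies $e(\B/\Gamma) = 1$. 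In particular the hypothesis that $\Gamma$ has parabolic elements is automatically satisfied, so the assumption on their rotational part is not vacuous.

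Now invoke that hypothesis. When the parabolic elements of $\Gamma$ have no rotational part, the standard toroidal compactification of $\B/\Gamma$ is smooth and its boundary $D$ is a disjoint union of smooth elliptic curves (see \S\ref{prelim}); write $(X,D)$ for the resulting pair, so that $X \smallsetminus D = \B/\Gamma$. Then $\overline{c}_2(X,D) = e(X \smallsetminus D) = e(\B/\Gamma) = 1$, and the Tian--Yau equality characterizing ball quotients gives $\overline{c}^2_1(X,D) = 3\overline{c}_2(X,D) = 3$. Thus $(X,D)$ is one of the five smooth toroidal compactifications classified in Theorem \ref{primo}, each of which has arithmetic associated lattice, and therefore $\Gamma$ is arithmetic.

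The mathematical substance is entirely contained in Theorem \ref{primo}; the deduction above is short. The only point requiring genuine care is the reduction to $\overline{c}_2 = 1$: one must verify that the Gauss--Bonnet proportionality and the integrality of $e(\B/\Gamma)$ are applied correctly in the noncompact finite-volume setting, and that the noncompact minimum really undercuts the cocompact one. Once this normalization is pinned down, the corollary follows immediately from the classification.
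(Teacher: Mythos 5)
Your proof is correct and follows the route the paper intends: Gauss--Bonnet/Hirzebruch proportionality identifies minimum covolume with Euler number one, the rotation-free hypothesis then yields a smooth toroidal compactification with $\overline{c}_2 = 1$, and Theorem \ref{primo} gives arithmeticity. The one addition is your explicit Noether-formula argument ruling out cocompact minimizers (where $3 \mid c_2$ forces $c_2 \ge 3$, versus Euler number one for the five noncompact examples); the paper leaves this comparison implicit by restricting attention to nonuniform lattices, but your version correctly justifies the corollary under the literal reading of ``minimum covolume among all torsion-free lattices.''
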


This gives further evidence toward the folklore conjecture that minimum volume locally symmetric manifolds and orbifolds are arithmetic (see, e.g., \cite{Belo}). More precisely, Corollary \ref{conjecture} proves this conjecture for torsion-free nonuniform lattices in $\PU(2,1)$ with rotation-free parabolic elements. In this language, Theorem \ref{primo} contributes to the wide literature on classification of minimal covolume lattices in semisimple Lie groups. For example, see the important very recent work of Gabai--Meyerhoff--Milley \cite[Cor.\ 1.2]{GMM} for the solution to the analogous problem for cusped hyperbolic $3$-manifolds.\\

The proofs of the above results follow an algebro-geometric approach. More precisely, we fully exploit the implications of the Kodaira--Enriques classification for smooth toroidal compactifications of ball quotients with Euler number one, following a program outlined by the first author in \cite{DiC13}. All five examples are associated with arithmetic lattices and they appear in the appendix to \cite{Stover}, where the second author gave several examples of noncompact arithmetic ball quotients of Euler number one. We note that in \cite{Stover} there are three other ball quotients of Euler number one that do not admit smooth toroidal compactifications. See Section \ref{thmproof} for more on this.

We briefly touch upon an analogy between our work and the classification of fake projective planes \cite{Kli, Pra, Ste}. On the one hand, fake projective planes are the minimum volume closed complex hyperbolic $2$-manifolds with first betti number zero. On the other hand, they are precisely the minimal surfaces of general type with irregularity $0$ and $c_1^2 = 3 c_2 = 9$. The techniques used in this paper are different from those used in the classification of fake projective planes. Crucial to the classification of fake projective planes is the proof that their fundamental groups are arithmetic \cite{Kli, Yeu}. Recall that nonarithmetic lattices are known to exist by the work of Mostow and Deligne--Mostow (see \cite{Deligne}). From that point, fake projective planes are then classified by enumerating the torsion-free arithmetic lattices in $\PU(2,1)$ of the appropriate covolume. Our proof is of a completely different nature. We first classify the possible smooth toroidal compactifications using algebro-geometric techniques, then deduce arithmeticity from commensurability with Hirzebruch's ball quotient.

We now outline the organization of the paper. Section \ref{prelim} starts with a short review of the geometry of smooth toroidal compactifications. The problem of finding the smallest toroidal compactifications is formulated in terms of logarithmic Chern numbers. In Section \ref{different}, we show that a toroidal compactification with $\overline{c}_{2}=1$ cannot have Kodaira dimension two, one, or $-\infty$. In light of the Kodaira--Enriques classification, this reduces the problem to the Kodaira dimension zero case.

In Section \ref{zero}, we study the Kodaira dimension zero case in detail. It is shown that the minimal model of a toroidal compactification with $\overline{c}_{2}=1$ that is not a bielliptic surface must be the product of two elliptic curves with large automorphism group. Finally, an explicit example is constructed and its uniqueness is proved.

In Section \ref{Bagnera}, we then classify all toroidal compactifications with $\overline{c}_{2}=1$ that are birational to a bielliptic surface. There are exactly four of them. We briefly recap the constructions of the five examples in \S \ref{recap}, and in \S \ref{thmproof} we show that they are commensurable and arithmetic.

\vspace{0.5cm}

\noindent\textbf{Acknowledgments}. The first author would like to thank the Mathematics Department at Notre Dame University for the nice research environment he enjoyed while carrying out most of this project. The first author was supported by the S.-S. Chern grant at ICTP. The second author was supported by the National Science Foundation under Grant Number NSF 1361000. The second author also acknowledges support from U.S. National Science Foundation grants DMS 1107452, 1107263, 1107367 "RNMS: GEometric structures And Representation varieties" (the GEAR Network). The authors also thank the referee for a very careful reading of the paper.

\section{Preliminaries}\label{prelim}

The theory of compactifications of locally symmetric varieties has been extensively studied. For example, see \cite{Borel2}. Let $\mathcal{H}^{n}$ be $n$-dimensional complex hyperbolic space. Noncompact finite volume complex hyperbolic manifolds then correspond with conjugacy classes of torsion-free nonuniform lattices in $\PU(n,1)$. Let $\Gamma$ be any such lattice. It is well-known that when the parabolic elements in $\Gamma$ have no rotational part, the manifold $\mathcal{H}^{n}/\Gamma$ has a particularly nice compactification $(X, D)$ consisting of a smooth projective variety $X$ and boundary divisor $D$. In other words, we require the subgroup of $\cc$ generated by the eigenvalues of parabolic elements of $\Gamma$ to be torsion-free. Under these assumptions the divisor $D$ is the union of smooth disjoint Abelian varieties, each having normal bundle of negative degree. The pair $(X, D)$ is referred as a \emph{toroidal} compactification of $\mathcal{H}^{n}/\Gamma$. For more details about this construction, see \cite{Hummel}, \cite{Ash}, and \cite{Mok}. Note that in \cite{Mok} this construction is carried out without any arithmeticity assumption on $\Gamma$.

We now describe the two dimensional case in more detail. Let $\mathcal{H}^{2}/\Gamma$ be a complex hyperbolic surface with cusps that admits a smooth toroidal compactification $(X, D)$. It is known (see \cite{Sakai} and \cite[\S 4]{Luca1}) that $(X, D)$ is $D$-minimal of log-general type, where $(X, D)$ is $D$-minimal if $X$ does not contain any exceptional curve $E$ of the first kind such that $D\cdot E\leq 1$. Moreover, by the Hirzebruch--Mumford proportionality principle \cite{Mumford} we have
\[
3\overline{c}_{2}=\overline{c}^{2}_{1},
\]
where $\overline{c}_{1}$ and $\overline{c}_{2}$ are the logarithmic Chern numbers of the pair $(X, D)$.

For the standard properties of logarithmic Chern classes we refer to \cite{Kaw}. Recall that $\overline{c}^{2}_{1}$ is equal to the self-intersection of the log-canonical divisor $K_{X}+D$, while $\overline{c}_{2}$ is simply the topological Euler characteristic of $X\smallsetminus D$. Since $D$ consists of smooth disjoint elliptic
curves, we have
\[
\overline{c}_{2}(X)=\chi(X)-\chi(D)=\chi(X)=c_{2}(X).
\]
By construction, $X\smallsetminus D$ is equipped with a complete metric with pinched negative sectional curvature. For this class of metrics it is well known that the Pfaffian of the curvature matrix is point-wise strictly positive (see \cite[\S 2]{Luca1}). Thus, Gromov--Harder's generalization of Gauss--Bonnet \cite{Gro1} implies that $\overline{c}_{2}$ must be a strictly positive integer. In particular, we see that $\overline{c}_{2}=1$ is the minimum possible value, and hence the Chern--Gauss--Bonnet theorem implies that the smooth toroidal compactifications with $3\overline{c}_{2}=\overline{c}^{2}_{1} = 3$ are minimum volume complex hyperbolic $2$-manifolds.

We close this section with the following fact that will be useful many times in the paper.

\begin{theorem}[Thm.\ 3.18 \cite{DiCerbo2}]\label{thm:CuspCount}
Let $(X, D)$ be a toroidal compactification of dimension $n \ge 2$. Let $q$ be the number of components of $D$. Then $q < \rho(X)$, where $\rho(X)$ is the Picard number of $X$.
\end{theorem}

In particular, the Picard number of $X$ strictly bounds the number of cusps from above. We briefly give the argument. The divisor $D$ gives $q$ disjoint elliptic curves of negative self-intersection, which, along with the class of an ample divisor, generate a subspace of dimension $q + 1$ in the N\'eron--Severi group of $X$, and the result follows.

\section{The case $\kappa\neq$ 0}\label{different}

In this section we show that a toroidal compactification $(X, D)$ with $\overline{c}_{2}=1$ must have Kodaira dimension $\kappa(X)$ equal to zero. Let us start by showing that $X$ cannot be of general type.

\begin{lemma}
Let $(X, D)$ be a toroidal compactification with $\overline{c}_{2}=1$. Then $X$ cannot have $\kappa(X)=2$.
\end{lemma}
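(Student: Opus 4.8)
The plan is to derive a numerical contradiction from Noether's formula. First I would record the two numerical inputs supplied by the hypothesis: since $3\overline{c}_{2}=\overline{c}^{2}_{1} = 3$ and $\overline{c}_{2}=c_{2}(X)=1$, we have $(K_{X}+D)^{2}=3$ and topological Euler characteristic $c_{2}(X)=1$.

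Next I would express $K_{X}^{2}$ in terms of $D^{2}$. Because $D$ is a disjoint union of smooth elliptic curves $D_{i}$, adjunction on each component gives $K_{X}\cdot D_{i}+D_{i}^{2}=2g(D_{i})-2=0$, so that $K_{X}\cdot D=-D^{2}$. Expanding $(K_{X}+D)^{2}=K_{X}^{2}+2K_{X}\cdot D+D^{2}$ and substituting yields $K_{X}^{2}-D^{2}=3$, i.e.\ $K_{X}^{2}=3+D^{2}$. Since $D$ is the exceptional divisor of a toroidal compactification, each component has negative normal bundle, so $D^{2}=\sum_{i}D_{i}^{2}<0$; hence $K_{X}^{2}<3$.

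Finally I would invoke Noether's formula $\chi(\oo_{X})=\frac{1}{12}\left(K_{X}^{2}+c_{2}(X)\right)=\frac{1}{12}\left(K_{X}^{2}+1\right)$. If $X$ were of general type, then $\chi(\oo_{X})$, a birational invariant equal to that of the minimal model of $X$, would satisfy $\chi(\oo_{X})\geq 1$, forcing $K_{X}^{2}\geq 11$ and contradicting $K_{X}^{2}<3$.

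The individual steps are all short, and the argument uses the hypothesis $\kappa(X)=2$ only through the inequality $\chi(\oo_{X})\geq 1$. The two points requiring care, rather than genuine obstacles, are the structural inputs: that the boundary components are elliptic curves with negative self-intersection (so that $D^{2}<0$), which is exactly the description of toroidal compactifications recalled in Section \ref{prelim}, and that $\chi(\oo_{X})\geq 1$ for a surface of general type, which follows from positivity of $\chi$ on minimal models of general type together with its birational invariance.
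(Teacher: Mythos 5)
Your proof is correct, and it takes a slightly different route from the paper's. The paper first proves that $X$ must itself be \emph{minimal} (since $c_2$ of a minimal surface of general type is strictly positive and blowing up only increases $c_2$, the hypothesis $c_2(X)=1$ forces minimality), then combines $\overline{c}^2_1 = c_1^2 - D^2 = 3$ with $D^2<0$ and $c_1^2>0$ to pin down $c_1^2\in\{1,2\}$, and finally rules out both values by the Noether congruence $c_1^2 + c_2 \equiv 0 \pmod{12}$. You bypass the minimality step entirely: the same adjunction computation gives $K_X^2 = 3 + D^2 < 3$ regardless of whether $X$ is minimal, and you instead invoke the birational invariance of $\chi(\mathcal{O}_X)$ together with $\chi(\mathcal{O}_X)\geq 1$ for surfaces of general type to force $K_X^2 \geq 12\chi(\mathcal{O}_X) - c_2(X) \geq 11$, a much wider numerical gap. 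The two arguments rest on the same underlying inputs (Noether's formula, negativity of $D^2$, and the positivity of $c_1^2$ and $c_2$ for minimal surfaces of general type, which is what gives $\chi>0$), so neither is more general, but your packaging is marginally cleaner in that it replaces the explicit minimality reduction and the mod-$12$ case check with a single inequality.
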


\begin{proof}
Assume for a contradiction that such a pair $(X, D)$ exists. Recall that, given a surface $Y$ and $Bl_{k}(Y)$ the blowup of $Y$ at $k$ points, the second Chern number of $Bl_{k}(Y)$ is given by
\[
c_{2}(Bl_{k}(Y)) = c_{2}(Y) + k.
\]
It is well known that the Euler number of a minimal surface of general type is strictly positive (see \cite[Prop.\ VII.2.4]{Bar}). Since $c_{2}(X)=1$, we conclude that $X$ must be minimal.

Next, observe that the adjunction formula implies that $K_X \cdot D = -D^2$, and so
\[
\overline{c}^{2}_{1}=c^{2}_{1}-D^{2}=3\overline{c}_{2}=3c_{2},
\]
which implies that
\begin{align}\notag
0<c^{2}_{1}<3c_{2},
\end{align}
since $D^{2}<0$ and $c^{2}_{1}>0$ for any minimal surface of general type. We then have $c^{2}_{1}\in\{1, 2\}$. However, for any complex surface we must have
\begin{align}\notag
c^{2}_{1}+c_{2} \equiv 0\pmod{12}
\end{align}
by Noether's formula \cite[p.\ 9]{Friedman}. We therefore conclude that $(X, D)$ cannot have $X$ be of general type.
\end{proof}

Now we rule out the case of Kodaira dimension one.

\begin{lemma}\label{lem:notk=1}
Let $(X, D)$ be a toroidal compactification with $\overline{c}_{2}=1$. Then $X$ cannot have $\kappa(X)=1$.
\end{lemma}

\begin{proof}
Given $X$, there exists a unique minimal model $Y$ such that $c^{2}_{1}(Y)=0$ and $c_{2}(Y)\geq 0$ \cite[Thm.\ VI.1.1]{Bar}. Noether's formula then implies that
\[
c_{2}(Y)=12d
\]
for some $d\in\zz_{\geq0}$. It follows that a surface with $\kappa(X)=1$ must satisfy
\[
c_{2}(X)=12d+k
\]
with $d, k\in\zz_{\geq0}$. Therefore, if we want $c_{2}(X)=1$, we must have $d=0$ and $k=1$. In other words, $X$ is the blowup at just one point of a minimal elliptic surface $Y$ with Euler number zero.

For a minimal elliptic fibration
\[
\pi: Y \longrightarrow B
\]
with multiple fibers $F_{1}, ..., F_{k}$ of multiplicities $m_{1}, \dots, m_{k}$ we have
\begin{equation}\label{canonicalf}
K_{Y}=\pi^{*}(K_B \otimes L) \otimes \mathcal{O}_{Y}(\sum^{k}_{i=1}(m_{1}-1)F_{i})
\end{equation}
where $L=(R^{1}\pi_{*}\mathcal{O}_{Y})^{-1}$. Note that $d=\mathrm{deg}(L)$ (see \cite[Cor.\ 16, p.\ 177]{Friedman}). In the case under consideration $c_2(Y) = 0$, so all the singular fibers of the elliptic fibration are multiple fibers with smooth reduction \cite[Cor.\ 17, p.\ 177]{Friedman}.

Now, consider
\[
f: X\longrightarrow B,
\]
where $f=\pi\circ Bl$ and $Bl: X\longrightarrow Y$ is the blowup map. We claim that some irreducible component $D_i$ of $D$ must map onto $B$ under $f$. If every $D_i$ were contained in a fiber, then there would be a general fiber of $f$ that does not meet the divisor $D$, which means that there would exist an irreducible smooth elliptic curve $E$ in $X \smallsetminus D$. The existence of such a holomorphic curve $E$  is impossible because $X \smallsetminus D$ is by construction hyperbolic. This proves the claim.

Since $f(D_{i}) = B$ for some $i$ and $D_i$ is an elliptic curve, the Hurwitz formula then implies that the genus of $B$ must be $0$ or $1$. Indeed, $D_i$ cannot be contained in a fiber of the fibration, since fibers are rational. Therefore the elliptic curve $D_i$ maps onto $B$, and so $B$ must have genus at most one. Equation \eqref{canonicalf} implies that if $\kappa(Y)=1$, we must assume the existence of multiple fibers. Indeed, otherwise $\kappa(Y) = -\infty$ when $g(B) = 0$ and $\kappa(Y) = 0$ when $g(B) = 1$ (see \cite[\S V.12]{Bar}).

Let $(Y, C)$ be the blow down configuration of $(X, D)$. We first study the case $g(B)=1$. We then have that some irreducible component $C_i$ of $C$ is a holomorphic $n$-section of the elliptic fibration, i.e., the map $C_i \to B$ is generically $n$-to-$1$. Moreover, $C_i$ is normalized by an irreducible component $D_i$ of $D$, which is a smooth elliptic curve. Let $Y'$ be the fiber product $Y\times_B D_i$, so $Y' \longrightarrow Y$ is an \'etale covering of degree $n$. However, $Y'$ then has a holomorphic $1$-section, which implies that $\kappa(Y')=0$, since there cannot be multiple fibers and all fibers are nonsingular. Then $\kappa(Y) = 0$ by invariance of $\kappa$ under unramified coverings, which is a contradiction.

Now assume that $g(B)=0$. In this case $\mathrm{deg}(L)=0$, so $L$ is trivial. Again, there is a holomorphic $n$-section $C_i$ that is normalized by a smooth elliptic curve $D_i$. Following a base change argument as in the setup for the last exercise on \cite[p.\ 193]{Friedman}, there is a finite unramified cover $\pi': Y'\longrightarrow D_i$ of $\pi:Y\longrightarrow B$ with a holomorphic section and such that $L'=\mathcal{O}_{D_i}$. We then have that $Y'=D_i\times E$ for some elliptic curve $E$. By \cite[Ch.\ VI, \S 1]{Bar}, the Kodaira dimension of $Y$ cannot be one.
\end{proof}

We now show that $X$ cannot be birational to a rational or ruled surface.

\begin{lemma}\label{lem:NotRational}
Let $(X, D)$ be a toroidal compactification with $\overline{c}_{2}=1$. Then $X$ cannot have $\kappa(X)=-\infty$.
\end{lemma}

\begin{proof}
Recall that the minimal models of surfaces with negative Kodaira dimension are $\pp^{2}$, the Hirzebruch surfaces $X_{e}$, and ruled surfaces over Riemann surfaces of genus $g\geq 1$ \cite[Ch.\ VI]{Bar}. Since
\[
c_{2}(\pp^{2})=3,\quad c_{2}(X_{e})=4,
\]
and $c_2$ only increases under blowup, a toroidal compactification with $\overline{c}_{2}=1$ must have minimal model a ruled surface with base of genus $g \ge 1$.

Then $c_2(X) = 4(1 - g) + k$, where $k$ is the number of blowup points, so $X$ must be the blowup at exactly one point of a surface $Y$ ruled over an elliptic curve. Indeed, similar to the argument in Lemma \ref{lem:notk=1}, any elliptic curve $D_i$ in the compactification must map onto the base, which has genus $g \ge 1$, and hence $g = 1$. This fact combined with the formula for $c_{2}(X)$ given above implies that $k=1$. Therefore, $c_{2}(Y)=0$. Since the rank of the Picard group of $X$ is three, by Theorem \ref{thm:CuspCount} we have that $X$ can have at most two cusps.

Let $(Y, C)$ denote the blow down configuration of $(X, D)$, and assume that $C$ consists of exactly one irreducible component. We then have by an argument similar to the $\kappa = 1$ case that $C$ must be an $n$-section of the ruling of $Y$. It is easily seen that $C$ cannot be a smooth $n$-section of the ruling for any $n \ge 1$, since this implies that $X \smallsetminus D$ contains a $\pp^{1}$ with just one puncture, namely the exceptional fiber of the blowup, which contradicts hyperbolicity of the metric on $X \smallsetminus D$. This argument implies that $C$ must be singular at some point $p$. In fact, since $\overline{c}_2(X) = 1$, $p$ must be the unique singular point, and hence $X$ is the blowup of $Y$ at $p$.

Consider the composition of the blowdown $X \to Y$ with the map of $Y$ to the base $B$ of the ruling. Then $D$ is a smooth elliptic curve on $X$ not contained in a fiber of the map to $B$, hence $D \to B$ is a surjective map between elliptic curves. Such a map must be \'etale, but it factors through the map from $D$ onto the singular curve $C$. This is a contradiction.

Let us conclude by studying the case when $(X, D)$ has two cusps. In this situation $(Y, C)$ is such that $C$ consists of two irreducible components, say $C_{1}$ and $C_{2}$, intersecting in a point $p$, where $p$ is the point of $Y$ blown up to obtain $X$. Considering the exceptional divisor of the blowup, if both $C_1$ and $C_2$ were smooth multisections then we would obtain a twice-punctured $\pp^1$ in $X \smallsetminus D$, which is impossible. They are also clearly not sections of the fibration, since their proper transform to $X$ would not be an elliptic curve. It follows that at least one of $C_1$ and $C_2$ must be singular at the blowup point $p$. Moreover, the tangents lines of $C_{1}$ and $C_{2}$ at $p$ must be all distinct for the proper transforms to be disjoint. We can then proceed as in the one cusp case to obtain a contradiction.
\end{proof}

We summarize the results of this section as a proposition.

\begin{proposition}\label{reduction}
Let $(X, D)$ be a toroidal compactification with $\overline{c}_{2}=1$. Then the Kodaira dimension of $X$ is zero.
\end{proposition}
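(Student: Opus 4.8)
The plan is to derive the proposition as an immediate consequence of the three lemmas proved above together with the Kodaira--Enriques classification of smooth projective surfaces. Recall that the Kodaira dimension $\kappa(X)$ of any such surface takes exactly one of the four values $-\infty$, $0$, $1$, or $2$. The three preceding lemmas establish precisely that a toroidal compactification $(X, D)$ with $\overline{c}_2 = 1$ cannot have $\kappa(X) = 2$, cannot have $\kappa(X) = 1$, and cannot have $\kappa(X) = -\infty$.

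First I would observe that these three lemmas together eliminate every possibility except $\kappa(X) = 0$. Since the Enriques classification guarantees that $\kappa(X)$ can only assume the four listed values, the single remaining case must hold, and we conclude $\kappa(X) = 0$. No additional input is required beyond quoting the lemmas and the classification theorem.

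The substantive mathematical content lives entirely in the three lemmas, so there is no genuine obstacle at the level of the proposition itself; it is a formal bookkeeping step closing out the section. For context, the difficulty in those lemmas was distributed as follows: ruling out general type combined Noether's congruence $c_1^2 + c_2 \equiv 0 \pmod{12}$ with the numerical constraint $0 < c_1^2 < 3c_2$ forced by $D^2 < 0$ and $c_2(X) = 1$; ruling out $\kappa(X) = 1$ and $\kappa(X) = -\infty$ relied crucially on the hyperbolicity of $X \smallsetminus D$, which forbids any entire holomorphic curve—such as a smooth elliptic fiber or a once- or twice-punctured rational curve—from appearing in the complement of $D$. Given these inputs, the present proposition follows at once, and I would state it in a single short sentence invoking the three lemmas and Kodaira--Enriques.
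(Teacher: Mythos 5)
Your proposal is correct and matches the paper exactly: the proposition is stated there as a summary of the three preceding lemmas, which rule out $\kappa(X)=2$, $1$, and $-\infty$, leaving $\kappa(X)=0$ as the only remaining value in the Kodaira--Enriques classification. No further argument is needed or given in the paper.
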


Of course, it remains to be seen if any such example actually exists. The next two sections completely solve this problem.

\section{The case $\kappa$ = 0}\label{zero}

In light of Proposition \ref{reduction}, a toroidal compactification with $\overline{c}_{2}=1$ must be birational to a minimal surface of Kodaira dimension zero. Recall that minimal surfaces with Kodaira dimension zero are given by:
\begin{itemize}

\item K3 surfaces, $c_{2}=24$;

\item Enriques surfaces, $c_{2}=12$;

\item Abelian surfaces, $c_{2}=0$;

\item bielliptic surfaces, $c_{2}=0$;

\end{itemize}
for details see \cite[Ch.\ VI]{Bar}. Thus, let $(X, D)$ be as in Proposition \ref{reduction}. Since $\overline{c}_{2}=c_{2}=1$, we have that $X$ is the blowup an Abelian or bielliptic surface at exactly one point.

Now, let $D_{1}, ..., D_{k}$ be the irreducible components of the compactifying divisor $D$. Since each $D_{i}$ is a smooth elliptic curve with negative self-intersection, adjunction implies that $K_X \cdot D = - D_i^2$, so we have
\[
\overline{c}^{2}_{1}(X)= \big( K_{X}+\sum_{i}D_{i} \big)^{2}=K^{2}_{X}-\sum_{i}D^{2}_{i}=-1-\sum_{i}D^{2}_{i}.
\]
Then $3\overline{c}_{2}(X)=\overline{c}^{2}_{1}(X)$, which
implies
\begin{align}\notag
-D^{2}_{1}- \cdots -D^{2}_{k}=4.
\end{align}
Therefore, we have the following finite list of configurations:
\begin{itemize}

\item 1 cusp, $D^{2}_{1}=-4$;

\item 2 cusps, $D^{2}_{1}=-1$, $D^{2}_{2}=-3$ or $D^{2}_{1}=-2$, $D^{2}_{2}=-2$;

\item 3 cusps, $D^{2}_{1}=-1$, $D^{2}_{2}=-1$, $D^{2}_{3}=-2$;

\item 4 cusps, $D^{2}_{1}=D^{2}_{2}=D^{2}_{3}=D^{2}_{4}=-1$.

\end{itemize}

Let $(Y, C)$ denote the blow down configuration of $(X, D)$. Since $Y$ is an Abelian or bielliptic surface, $K_{Y}=0$. Thus, if $C_{i}$ is an irreducible component of $C$ in $Y$, adjunction implies that the arithmetic genus $p_a$ of $C_i$ satisfies
\[
p_{a}(C_{i})=1+\frac{C^{2}_{i}}{2},
\]
{see for example \cite[p.\ 13]{Friedman}.} Note that $C^{2}_{i}\geq-2$ and it is even. If {$C_i^{2}=-2$}, then $C_{i}$ is a smooth rational curve, which is impossible since $Y$ has universal cover $\cc^{2}$. If $C^{2}_{i}=0$, with $C_{i}$ nonsmooth, then {the genus-degree formula implies that} $C_{i}$ is a rational curve with a single node or a cusp. This is again impossible, since in both of these cases $C_{i}$ is normalized by $\pp^{1}$. In conclusion, either $C_{i}$ is a smooth elliptic curve with trivial self-intersection, or $C_{i}$ has a singular point $p$ and $C^{2}_{i}=2n$ for some $n\geq1$.

We study the singular case first. Let
\[
\pi:X\longrightarrow Y
\]
be the blowup map at $p$. We then have
\[
\pi^{*}C_{i}=D_{i}+rE
\]
where $D_{i}$ is the proper transform of $C_{i}$ in $X$, $E$ is the exceptional divisor, and $r$ is the multiplicity of the singular point $p$. Moreover, we have $D_{i}\cdot E=r$, $D^{2}_{i}=C^{2}_{i}-r^{2}$ and
\begin{equation}\label{genus}
2p_{a}(D_{i})-2=2p_{a}(C_{i})-2-r(r-1).
\end{equation}
If we want $D^{2}_{i}\leq-1$ with $C_{i}$ not smooth, we must have
\[
D^{2}_{i}=2n-r^{2} \le -1.
\]

Since $D_{i}$ is a smooth elliptic curve, Equation \eqref{genus} simplifies to the quadratic equation
\[
r^{2}-r-2n=0,
\]
whose solutions are given by
\[
r_{1,2}=\frac{1\pm\sqrt{1+8n}}{2}.
\]
Since $r$ is a positive integer, we only need to consider the positive square root case in the above formula. Therefore, the self-intersection of $D_{i}$ is given by
\[
2n - \left( \frac{1+\sqrt{1+8n}}{2} \right)^{2},
\]
for $n\geq1$. This self-intersection is easily seen to be decreasing in $n$ and less than $-4$ for $n\geq7$. All the possibilities for $1\leq n\leq 6$ are then given by the following list:
\begin{align}\label{list}
& n=1, \quad C^{2}_{i}=2, \quad r=2; \notag\\ & n=3, \quad
C^{2}_{i}=6, \quad r=3;\\ \notag & n=6, \quad C^{2}_{i}=12,\quad
r=4. \notag
\end{align}

In conclusion, we must understand whether or not an Abelian or bielliptic surface can support a curve with only one singular point of order $r$ and self-intersection as in \eqref{list}.

\subsection{The Abelian Case and the First Example}

We now study the case when $Y$ is an Abelian surface in detail. First, observe that the line bundle associated with a curve as in \eqref{list} must be ample.

\begin{lemma}\label{Nakai}
Let $C$ be an irreducible divisor on an Abelian surface $Y$ such that $C^{2}>0$. Then $L=\mathcal{O}_{Y}(C)$ is ample.
\end{lemma}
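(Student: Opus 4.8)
The plan is to apply the Nakai--Moishezon criterion for ampleness on a smooth projective surface: a line bundle $L$ on $Y$ is ample if and only if $L^{2}>0$ and $L\cdot E>0$ for every irreducible curve $E\subset Y$. The first condition holds by hypothesis, since $L^{2}=C^{2}>0$. It therefore remains to verify $L\cdot E=C\cdot E>0$ for every irreducible curve $E$. If $E=C$, then $C\cdot E=C^{2}>0$ and there is nothing to prove, so I may assume that $E$ is an irreducible curve distinct from $C$. As $C$ and $E$ are then distinct irreducible curves sharing no common component, we have $C\cdot E\geq 0$, and the entire content of the argument lies in excluding the equality $C\cdot E=0$.

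To rule this out I would invoke the Hodge index theorem, which asserts that the intersection form on $\mathrm{NS}(Y)\otimes\rr$ has signature $(1,\rho-1)$. Since $C^{2}>0$, the class of $C$ spans a positive line, and its orthogonal complement is negative definite. Hence if $C\cdot E=0$ and $E$ were not numerically trivial, we would be forced to conclude that $E^{2}<0$. The point is that this cannot happen on an Abelian surface: by adjunction every irreducible curve satisfies $E^{2}=2p_{a}(E)-2\geq -2$, and the value $E^{2}=-2$ would force $E$ to be a rational curve, which is impossible because $Y$ has universal cover $\cc^{2}$ and so carries no rational curves. This is exactly the observation already used above to exclude $C_{i}^{2}=-2$, and it upgrades the inequality to $E^{2}\geq 0$. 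The contradiction then shows that $E$ must be numerically trivial; but a nonzero effective divisor cannot be numerically trivial, since it meets any ample class positively. Therefore $C\cdot E>0$, and Nakai--Moishezon yields ampleness of $L$.

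The main obstacle is precisely the exclusion of $C\cdot E=0$, and the decisive input is the absence of negative (equivalently, rational) curves on an Abelian surface, which promotes the a priori bound $E^{2}\geq -2$ to $E^{2}\geq 0$ and lets the Hodge index theorem close the argument. I expect the remaining verifications (nonnegativity of intersections between distinct irreducible curves, and non-triviality of effective classes) to be routine.

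As an alternative that is specific to the Abelian setting, one could instead argue via the index theorem for line bundles on Abelian varieties: since $C$ is effective one has $H^{0}(Y,L)\neq 0$, forcing the index of $L$ to vanish, so the associated Hermitian form is positive semidefinite; the condition $C^{2}>0$ makes $\chi(L)=\tfrac{1}{2}C^{2}>0$ nonzero, hence $L$ is nondegenerate, and a positive semidefinite nondegenerate form is positive definite, giving ampleness. I would present the Nakai--Moishezon proof as the primary route, since it is the most self-contained.
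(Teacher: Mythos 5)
Your proof is correct, but it establishes the key inequality $C\cdot E>0$ by a genuinely different mechanism than the paper. The paper exploits the group structure of the Abelian surface: assuming $C\cdot E=0$ with $E\neq C$, it translates $E$ by a suitable $y\in Y$ so that $t_{y}(E)$ meets $C$; since $t_{y}(E)$ is numerically equivalent to $E$, this forces $t_{y}(E)\cdot C>0=E\cdot C$, a contradiction. You instead combine the Hodge index theorem (the orthogonal complement of the positive class $[C]$ is negative definite, so $C\cdot E=0$ forces $E^{2}<0$ or $E\equiv 0$) with the observation that adjunction and the absence of rational curves on an Abelian surface rule out $E^{2}<0$, while effectivity rules out $E\equiv 0$. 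Both routes are complete and both finish with Nakai--Moishezon. The paper's translation trick is shorter and uses only the homogeneity of $Y$; your argument is slightly heavier but more portable, since the same Hodge-index reasoning shows strict nefness of any irreducible curve of positive square on any surface without negative curves, and it also recycles the adjunction computation already carried out in Section 4. Your secondary suggestion via Mumford's index theorem for line bundles on Abelian varieties is likewise sound ($\chi(L)=\tfrac{1}{2}C^{2}>0$ gives nondegeneracy, and $H^{0}(Y,L)\neq 0$ then forces index zero, hence positivity of the Hermitian form), though as you note it is less self-contained.
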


\begin{proof}
For any curve $E$ on $Y$, we would like to show that $C\cdot E>0$. Since $C^{2}>0$, we need to study curves $E\neq C$. For these curves we clearly have $C\cdot E\geq0$. Assume then that $C\cdot E=0$. Let $t_{y}(E)$ denote the translate of $E$ by an element $y\in Y$. Choosing $y\in Y$ appropriately, we can assume that $t_{y}(E)\cap C\neq\{0\}$. Since the curve $t_{y}(E)$ is numerically equivalent to $E$, we obtain a contradiction. We therefore conclude that $L$ is a strictly nef line bundle with positive self-intersection. The lemma is now a consequence of Nakai's criterion for ampleness of divisors on surfaces \cite[Cor.\ 6.4, p.\ 161]{Bar}.
\end{proof}

Next, we show that curves as in \eqref{list} cannot exist on an Abelian surface. The proof of this fact uses standard properties of theta functions. Recall that any effective divisor on a complex torus is the divisor of a theta function \cite[Th. 3.1]{Deb}. Let $C$ be a reduced divisor as in Lemma \ref{Nakai}. Then, if we let $V=\cc^{2}$ and $\pi: V\longrightarrow V/\Gamma$ be the universal covering map, we have that
\begin{equation}\label{theta}
\pi^{*}C=(\theta)
\end{equation}
for some theta function $\theta$ on $V$.

More precisely, we can find a Hermitian form $H$, a character $\alpha: \Gamma\longrightarrow U(1)$, and a theta function satisfying \eqref{theta} and the following ``normalized'' functional equation
\begin{equation}\label{functional}
\theta(z+\gamma)=\alpha(\gamma)e^{\pi H(\gamma, z)+\frac{\pi}{2}H(\gamma, \gamma)}\theta(z)=e_{\gamma}(z)\theta(z)
\end{equation}
for any $z\in V$ and $\gamma\in \Gamma$. Note that $e_{\gamma}$ is the factor of holomorphy for the line bundle $L=\mathcal{O}_{Y}(C)$, and we use the convention that the first variable of $H$ is the antiholomorphic variable. Then there is an identification between the space of sections of $L$ and the vector space of theta functions of type $(H, \alpha)$ on $V$.

Considering the list obtained in \eqref{list}, we are interested in the case when $C$ has exactly one singular point. Thus, let $C\in|L|$ be a reduced divisor and denote by $C^{*}=C\backslash\{p\}$ the smooth part of $C$. For every $q\in C^{*}$, $T_{q}C$ is a well defined $1$-dimensional subspace of $T_{q}Y$. Therefore, if we let $z_{1}, z_{2}$ be coordinate functions for $V$, the equation for $T_{q}C$ is given by
\[
\sum^{2}_{i=1}\partial_{z_{i}}\theta(q)(z_{i}-q_{i})=0.
\]
We can then consider a Gauss type map
\[
G: C^{*}\longrightarrow \pp^1
\]
where
\[
G(q)=(\partial_{z_{1}}\theta(q): \partial_{z_{2}}\theta(q)).
\]

We claim that since $C$ is reduced and $L$ is ample, the Gauss map cannot be constant. We proceed by contradiction. Suppose that the image of the Gauss map is the point $[x_{1}: x_{2}]\in \pp^1$. If $x_{2}\neq 0$, define the derivation
\[
\partial_{w}:=\partial_{z_{1}}-k\partial_{z_{2}},
\]
where $k=x_{1}/x_{2}$. If $x_{2}=0$, simply consider the derivative along the second coordinate function, in other words, $\partial_{w}=\partial_{z_{2}}$. By construction, we have $\partial_{w}\theta=0$ for all $q\in C^{*}$. Since $C$ is reduced, the function
\[
f=\partial_{w}\theta/\theta
\]
is holomorphic on $V$ except at the singular points of $\pi^{*}C$. By the Hartogs extension theorem, we know that $f$ can be extended to a holomorphic function on $V$. First, notice that
\begin{align*}
\partial_w \theta(z + \gamma) &= \partial_w e_\gamma(z) \theta(z) + e_\gamma(z) \partial_w \theta(z) \\
 &= \pi H(\gamma, v) e_\gamma(z) \theta(z) + e_\gamma(z) \partial_w \theta(z),
\end{align*}
where
\[
v=\partial_{w}\left(
\begin{array}{ccc}
z_1 \\
z_2
\end{array}\right).
\]
Therefore, the functional equation \eqref{functional} implies that
\[
f(z+\gamma)-f(z)=\pi H(\gamma, v)
\]
for any $\gamma\in\Gamma$, which further implies that
\[
f(z)=\pi H(z, v)+K
\]
for some constant $K$. Since $f$ is holomorphic and $H$ is antiholomorphic in $z$, we have therefore reached a contradiction. To summarize, we have shown that for any derivation $\partial_{w}$, the function $\partial_{w}\theta$ cannot be identically zero on $C^{*}$.

Now, by the functional equation given in \eqref{theta}, the restriction of $\partial_{w}\theta$ to $\pi^{*}C$ can be considered as a section of the line bundle $L$ restricted to $C$. Thus, the intersection number $(\partial_{w}\theta)\cdot C$ coincides with the self-intersection $C^{2}$. Now consider a derivation $\partial_{w}$ with parameter $w$ determined by a generic point in the image of the Gauss map, and suppose that the multiplicity of the singular point $p$ is $r_{p}$. The intersection number of $(\partial_{w}\theta)$ and $C$ at the singular point $p$ is then $r_{p}(r_{p}-1)$. Moreover, by construction $\partial_{w}\theta$ vanishes somewhere on $C^{*}$. We conclude that
\begin{equation}\label{obstruction}
C^{2}\geq r_{p}(r_{p}-1)+1.
\end{equation}

\begin{remark}\label{multiple}
The same argument shows that, if $C$ is an irreducible curve on an Abelian surface with $C^{2}>0$ and singular points $p_{j}$ of multiplicities $r_{j}$, then $C^{2}>\sum_{j}r_{j}(r_{j}-1)$.
\end{remark}

Next, we observe that in all of the cases given in \eqref{list}, we have
\[
C^{2}_{i}=r(r-1)
\]
so that, using \eqref{obstruction}, we can rule out the cases of one, two, and three cusps. Indeed, these are precisely the cases for which $C$ would contain an irreducible component that is forbidden by the above discussion. We summarize this as a lemma.

\begin{lemma}\label{Abelian}
Let $(X, D)$ be a toroidal compactification with $\overline{c}_{2}=1$ and $\kappa(X)=0$. If $X$ is not birational to a bielliptic surface, then $X$ is the blowup of an Abelian surface. Moreover, $D$ consists of four disjoint smooth elliptic curves with $D^{2}_{1}=D^{2}_{2}=D^{2}_{3}=D^{2}_{4}=-1$.
\end{lemma}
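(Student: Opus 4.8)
The plan is to assemble the structural facts already established and then pin down the cusp configuration via a single numerical coincidence. The underlying surface is immediate: as recorded just before the statement, $\overline{c}_2 = c_2 = 1$ together with the Kodaira--Enriques list forces $X$ to be the one-point blowup of a surface $Y$ with $c_2(Y) = 0$, so $Y$ is Abelian or bielliptic, and the standing hypothesis that $X$ is not birational to a bielliptic surface leaves only the Abelian case. I would therefore begin the substance of the proof with the blow-down configuration $(Y, C)$ over this Abelian surface.

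The first real step is to eliminate all singular components of $C$. By the dichotomy established earlier, each irreducible component $C_i$ is either a smooth elliptic curve with $C_i^2 = 0$ or a singular curve from the list \eqref{list}. The crucial observation --- and the heart of the argument --- is that every entry of \eqref{list} satisfies $C_i^2 = r(r-1)$ exactly, since $2 = 2 \cdot 1$, $6 = 3 \cdot 2$, and $12 = 4 \cdot 3$. This lands precisely one below the threshold imposed by the Gauss-map obstruction \eqref{obstruction}, which demands $C_i^2 \ge r(r-1) + 1$ for a singular component of multiplicity $r$ on an Abelian surface. Hence no $C_i$ can be singular, and every component of $C$ is a smooth elliptic curve of self-intersection zero.

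The second step reads off the cusp data. Writing $\pi : X \to Y$ for the blowup at the point $p$ and $r$ for the multiplicity of $C_i$ at $p$, smoothness forces $r \in \{0,1\}$ and $D_i^2 = C_i^2 - r^2 = -r^2$. Since each $D_i$ is a component of the compactifying divisor, $D_i^2 < 0$ rules out $r = 0$; thus $r = 1$ and $D_i^2 = -1$ for all $i$. Feeding this into the relation $-\sum_i D_i^2 = 4$ obtained above forces exactly four components, each of self-intersection $-1$ --- the final configuration in the earlier list. Their smoothness and mutual disjointness are part of the definition of a toroidal compactification, which completes the proof.

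I expect the genuine difficulty to lie not in this assembly but in the razor-thin character of the middle step: every admissible singular curve sits exactly on the boundary $C_i^2 = r(r-1)$ of \eqref{obstruction}, so the exclusion succeeds only because that inequality is strict. The strictness is precisely what the theta-function Gauss-map computation preceding the lemma delivers --- the ``$+1$'' coming from the fact that $\partial_w \theta$ must vanish somewhere on the smooth locus $C^*$ --- and without it none of the one-, two-, or three-cusp Abelian configurations could be excluded.
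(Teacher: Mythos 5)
Your proposal is correct and follows essentially the same route as the paper: the paper likewise reduces to the blow-down pair $(Y,C)$ over an Abelian surface, observes that every singular entry of \eqref{list} has $C_i^2 = r(r-1)$ and is therefore excluded by the strict Gauss-map inequality \eqref{obstruction}, and concludes that only the four-cusp configuration with all $D_i^2 = -1$ survives. Your repackaging (eliminate all singular components first, then count smooth components through the blown-up point) is just a mild reordering of the paper's elimination of the one-, two-, and three-cusp cases from its list.
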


Thus, by Lemma \ref{Abelian}, we must classify the pairs $(Y, C)$ where $Y$ is an Abelian surface and $C$ consists of four smooth elliptic curves intersecting in just one point. We will show that, up to isomorphism, there is only one such pair. This result follows from few geometric facts.

\begin{fact}\label{fatto1}
Let $Y=\cc^{2}/\Gamma$ be an Abelian surface containing two smooth elliptic curves $C_{1}$, $C_{2}$ such that $C_{1}\cdot C_{2}=1$. Then $Y$ is isomorphic to the product $C_{1}\times C_{2}$.
\end{fact}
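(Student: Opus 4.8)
The plan is to reduce the statement to the fact that both curves are abelian subvarieties, and then to identify $Y$ with $C_1 \times C_2$ via the addition homomorphism. First I would translate $Y$ so that the unique intersection point $p$ of $C_1$ and $C_2$ becomes the origin $0$ of the group law. Since $C_1 \cdot C_2 = 1$, the two distinct curves meet at a single point with local intersection multiplicity one, i.e.\ transversally, so after translation both $C_1$ and $C_2$ pass through $0$. Each inclusion $C_i \hookrightarrow Y$ is then a morphism of abelian varieties sending $0$ to $0$, and by the rigidity property of abelian varieties (any pointed morphism of abelian varieties is automatically a group homomorphism) the curves $C_1$ and $C_2$ are one-dimensional subtori of $Y$.

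Next I would consider the addition homomorphism
\[
\mu \colon C_1 \times C_2 \longrightarrow Y, \qquad \mu(x, y) = x + y.
\]
Its differential at the origin is the natural map $T_0 C_1 \oplus T_0 C_2 \to T_0 Y$. Because $C_1 \cdot C_2 = 1$ forces the intersection at $0$ to be transverse, the tangent lines $T_0 C_1$ and $T_0 C_2$ are distinct inside the two-dimensional space $T_0 Y$, so they span it and $d\mu_0$ is an isomorphism. Hence $\mu$ is an isogeny.

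Finally I would compute the kernel of $\mu$ to see that this isogeny has degree one. An element $(x, y)$ lies in $\ker \mu$ exactly when $y = -x$; since $C_2$ is a subgroup this forces $x \in C_1 \cap C_2$. The hypothesis $C_1 \cdot C_2 = 1$, together with the transversality already noted, shows that $C_1$ and $C_2$ meet only at $0$, so $C_1 \cap C_2 = \{0\}$ and $\ker \mu$ is trivial. An isogeny with trivial kernel is an isomorphism, and therefore $Y \cong C_1 \times C_2$.

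The step I expect to require the most care is the passage from ``smooth elliptic curve through $0$'' to ``subtorus'': one must invoke that a pointed morphism of abelian varieties is a homomorphism, equivalently that a smooth genus one curve passing through the origin generates, and coincides with, a one-dimensional abelian subvariety. The remaining point, that the scheme-theoretic intersection $C_1 \cap C_2$ is reduced and supported only at $0$ so that $\ker \mu$ is genuinely trivial rather than merely finite, follows at once from $C_1 \cdot C_2 = 1$, but it is worth recording explicitly since it is precisely what upgrades the isogeny $\mu$ to an isomorphism.
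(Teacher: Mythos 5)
Your proof is correct and follows essentially the same route as the paper: translate the intersection point to the origin, observe that the curves become subtori, and show the addition (resp.\ difference) map $C_1\times C_2\to Y$ is an isomorphism. The paper simply asserts this map is ``clearly one-to-one,'' whereas you supply the justifications (rigidity for the subtorus claim, transversality for the isogeny, and the kernel computation); the choice of $x+y$ versus $p-q$ is immaterial.
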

\begin{proof}
By translating the curves $C_{1}$ and $C_{2}$, we can always assume that $C_{1}\cap C_{2}= \{(0, 0)\}$. The curves $C_{i}$, $i=1, 2$, are then subgroups of $Y$. Thus, we can define the map
\[
\varphi: C_{1}\times C_{2}\longrightarrow Y
\]
that sends the point $(p, q)\in C_{1}\times C_{2}$ to $p-q\in Y$. The map $\varphi$ is clearly one-to-one.
\end{proof}

\begin{fact}\label{fatto2}
Let $Y=\cc^{2}/\Gamma$ be an Abelian surface containing three smooth elliptic curves $C_{i}$, $i=1, 2, 3$, such that $C_{1}\cap C_{2}\cap C_{3}=\{(0, 0)\}$ and such that $C_{i}\cdot C_{j}=1$ for any $i\neq j$. Then $Y$ is isomorphic to the product $C\times C$ where $C_{i}=C$ for any $i$.
\end{fact}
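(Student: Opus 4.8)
The plan is to reduce everything to the product structure furnished by Fact \ref{fatto1} and then analyze the third curve $C_3$ through its projections onto the two factors.

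First I would record that each $C_i$ is an abelian subvariety of $Y$. Indeed, since $C_1\cap C_2\cap C_3=\{(0,0)\}$, every $C_i$ passes through the origin, and a smooth elliptic curve mapped into an abelian surface sending origin to origin is a group homomorphism by rigidity; hence its image is a subtorus and $C_i$ is isomorphic to that subtorus. In particular $C_i^2=0$ for each $i$. Because $C_1\cdot C_2=1$, the two subtori meet transversally in the single point $(0,0)$, so Fact \ref{fatto1} applies and gives an isomorphism $\varphi\colon C_1\times C_2\xrightarrow{\ \sim\ }Y$ under which $C_1$ and $C_2$ correspond to the two coordinate subtori $C_1\times\{0\}$ and $\{0\}\times C_2$.

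The heart of the argument is to view $C_3$ inside the product $C_1\times C_2$ and exploit the two projections $\pi_1\colon C_1\times C_2\to C_1$ and $\pi_2\colon C_1\times C_2\to C_2$. The key observation is that $\{0\}\times C_2$ is the fiber of $\pi_1$ over the origin and $\varphi$ preserves intersection numbers, so the intersection number $C_3\cdot C_2=1$ equals the degree of $\pi_1$ restricted to $C_3$; likewise $C_3\cdot C_1=1$ equals the degree of $\pi_2|_{C_3}$. One first checks that $C_3$ is contained in a fiber of neither projection: otherwise $C_3$ would be a translate of $C_2$ or $C_1$ and, passing through the origin, would coincide with $C_2$ or $C_1$, contradicting $C_3\cdot C_i=1$ together with $C_i^2=0$. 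Hence both restricted projections are degree-one morphisms of smooth elliptic curves, and therefore isomorphisms. This yields $C_1\cong C_3\cong C_2$, so all three curves are isomorphic to a common elliptic curve $C$, and $Y\cong C_1\times C_2\cong C\times C$.

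I expect the only delicate point to be the translation of the intersection numbers $C_3\cdot C_i$ into the degrees of the projection maps, since this requires knowing that $C_3$ is genuinely a graph-type subtorus transverse to both factors rather than one of the coordinate curves; the computation $C_i^2=0$ together with the hypothesis $C_i\cdot C_j=1$ is exactly what rules out these degenerate possibilities. Everything else is a formal consequence of Fact \ref{fatto1} and the rigidity of morphisms between abelian varieties.
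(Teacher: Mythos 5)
Your proposal is correct and follows essentially the same route as the paper: invoke Fact \ref{fatto1} to identify $Y$ with $C_1\times C_2$, then use $C_3\cdot C_1=C_3\cdot C_2=1$ to conclude that $C_3$ is isomorphic to each factor. The paper states this last step in one terse line ("we conclude that $C_3=C_i$ for $i=1,2$"), and your degree-of-projection argument is precisely the justification it leaves implicit.
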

\begin{proof}
By Fact \ref{fatto1}, we have that $Y=C_{1}\times C_{2}$. Since $C_{3}\cdot C_{1}=1$, for $i=1, 2$, we conclude that $C_{3}=C_{i}$ for $i=1, 2$. The proof is complete.
\end{proof}

\begin{fact}\label{fatto3}
Let $Y$ be an Abelian surface that is the product of two identical elliptic curves, say $C=\cc/\Lambda$. Let $(w, z)$ be the natural product coordinates on $Y$. Then any smooth elliptic curve in $Y$, passing through the point $(0, 0)$, is given by an equation of the form $w=\alpha z$, with $\alpha$ such that $\alpha\Lambda\subseteq\Lambda$.
\end{fact}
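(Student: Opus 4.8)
The plan is to first upgrade the weak hypothesis, that $E$ is a smooth genus-one curve through the origin, to the much stronger statement that $E$ is a one-dimensional subtorus of $Y$, and only then to read off its equation in the product coordinates $(w,z)$. Since $E$ is a smooth projective curve of genus one equipped with the marked point $(0,0)$, it carries a canonical structure of elliptic curve with identity element $(0,0)$. The inclusion $\iota\colon E\hookrightarrow Y$ is then a morphism of abelian varieties sending identity to identity, so by the rigidity lemma it is automatically a group homomorphism. As $\iota$ is injective with one-dimensional image, it follows that $E=\iota(E)$ is a one-dimensional subtorus of $Y$ through the origin. I expect this promotion of $E$ to a subgroup to be the conceptual crux: everything afterward is a translation into coordinates.

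Next I would pass to the universal cover $\pi\colon\cc^{2}\to Y=\cc^{2}/(\Lambda\times\Lambda)$. A one-dimensional subtorus through the origin is the image of a complex line $\ell\subseteq\cc^{2}$ through $0$, and compactness of $E$ forces $\ell\cap(\Lambda\times\Lambda)$ to be a rank-two lattice spanning $\ell$. In the coordinates $(w,z)$, the line $\ell$ is either the coordinate axis $z=0$, or it is uniquely of the form $w=\alpha z$ for some $\alpha\in\cc$; after interchanging the two factors if necessary we may assume we are in the latter, non-vertical, case. To finish, I would recognize $E$ as the graph of a homomorphism: the second projection $\mathrm{pr}_{2}\colon E\to C$ is a homomorphism of elliptic curves, and when it is an isomorphism its inverse composed with $\mathrm{pr}_{1}$ yields an endomorphism $\phi\colon C\to C$ with $E=\{(\phi(z),z)\}$. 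Since every endomorphism of $C=\cc/\Lambda$ is multiplication by some $\alpha\in\cc$ with $\alpha\Lambda\subseteq\Lambda$, this gives exactly the asserted equation $w=\alpha z$, and indeed $\alpha\Lambda\subseteq\Lambda$ is precisely the condition guaranteeing that "$w=\alpha z$" descends to a well-defined subvariety of $Y$.

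The step I expect to require the most care is the last one, namely ensuring that the \emph{slope} $\alpha$ of the line genuinely preserves the lattice rather than merely lying in the endomorphism algebra $\mathrm{End}(C)\otimes\cc$. The subtle point is that a subtorus whose projection $\mathrm{pr}_{2}\colon E\to C$ has degree greater than one is a multisection, not a graph over $z$, and then $\alpha\Lambda\subseteq\Lambda$ can fail; for such curves one must instead use the other projection. For the curves relevant to our classification this difficulty does not arise, since each of them meets a coordinate factor in a single point (intersection number one), which forces the relevant projection to be an isomorphism and hence $\alpha$ to be a unit of $\mathrm{End}(C)$. Thus the description $w=\alpha z$ with $\alpha\Lambda\subseteq\Lambda$ holds, and the standard identification of $\mathrm{Hom}(C,C)$ with the set of such multipliers completes the argument.
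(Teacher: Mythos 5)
Your argument is correct and is essentially the paper's own proof written out in full: the paper simply asserts that a subgroup of $\cc^{2}$ is a line $w=\alpha z$ and that this descends to $Y$ exactly when $\alpha\Lambda\subseteq\Lambda$, leaving the reduction from ``smooth elliptic curve through the origin'' to ``subtorus'' (your rigidity step) implicit. Your caveat about multisections is well taken --- as literally stated the Fact misses subtori such as $\{(t,2t):t\in C\}$, whose slope in the form $w=\alpha z$ is $\alpha=\tfrac{1}{2}\notin\mathrm{End}(C)$, as well as the factor $z=0$ --- but, exactly as you observe, every curve to which the Fact is subsequently applied has intersection number one with a coordinate factor, so the relevant projection is an isomorphism and the paper's use of the statement is unaffected.
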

\begin{proof}
A subgroup in $\cc^{2}$ is given by an equation of the form $w=\alpha z$. This equation descends to $Y$ precisely when $\alpha\Lambda\subseteq\Lambda$.
\end{proof}

\begin{fact}\label{fatto4}
Let $C_{\alpha}$ denote the curve in $Y=C\times C$ given by the equation $w=\alpha z$ with $\alpha\Lambda\subseteq\Lambda$ and $\alpha\neq 0$. Then $C_{0}\cdot C_{\alpha}=1$ if and only if $\alpha\Lambda=\Lambda$.
\end{fact}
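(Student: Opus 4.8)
The plan is to compute the intersection number $C_0 \cdot C_\alpha$ directly, by enumerating the intersection points with multiplicity, and then to recognize that count as the degree of the multiplication-by-$\alpha$ isogeny on $C$. Recall from Fact \ref{fatto3} that $C_0$ is the curve $\{w = 0\}$ and $C_\alpha$ is $\{w = \alpha z\}$, both smooth elliptic curves through $(0,0)$.

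First I would identify the intersection set. A point $(w, z) \in Y$ lies on both $C_0$ and $C_\alpha$ precisely when $w = 0$ and $\alpha z \equiv 0 \pmod{\Lambda}$. Since $\alpha \neq 0$, the condition $\alpha z \in \Lambda$ describes $z \in \tfrac{1}{\alpha}\Lambda$, and because $\alpha\Lambda \subseteq \Lambda$ we have $\Lambda \subseteq \tfrac{1}{\alpha}\Lambda$; hence the number of solutions $z$ modulo $\Lambda$ equals the index $[\tfrac{1}{\alpha}\Lambda : \Lambda]$. Multiplication by $\alpha$ carries $\tfrac{1}{\alpha}\Lambda$ isomorphically onto $\Lambda$ and $\Lambda$ onto $\alpha\Lambda$, so this index equals $[\Lambda : \alpha\Lambda]$. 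Equivalently, the intersection set is exactly the kernel of the isogeny $[\alpha]\colon C \to C$, whose cardinality is $\deg([\alpha]) = [\Lambda : \alpha\Lambda]$. Next I would check transversality: parametrizing $C_0$ by $z \mapsto (0, z)$ and $C_\alpha$ by $z \mapsto (\alpha z, z)$, the tangent directions in the $(w, z)$ coordinates are $(0, 1)$ and $(\alpha, 1)$, which are linearly independent since $\alpha \neq 0$. Thus every common point is a transverse intersection contributing $+1$, and therefore $C_0 \cdot C_\alpha = [\Lambda : \alpha\Lambda]$.

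The conclusion then follows immediately, since $[\Lambda : \alpha\Lambda] = 1$ if and only if $\alpha\Lambda = \Lambda$; that is, $C_0 \cdot C_\alpha = 1$ exactly when $[\alpha]$ is an automorphism of $C$. I do not anticipate a serious obstacle here. The only point requiring genuine care is the transversality verification, which is what guarantees that the set-theoretic count equals the intersection number rather than merely bounding it from below; once that is in place the rest is a routine index computation.
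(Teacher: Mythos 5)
Your proposal is correct and follows the same route as the paper, which simply observes that $C_{0}\cap C_{\alpha}$ consists of $[\Lambda:\alpha\Lambda]$ distinct points; you supply the details (the identification of the intersection with the kernel of the isogeny $[\alpha]$ and the transversality check) that the paper leaves implicit.
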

\begin{proof}
The intersection $C_{0}\cap C_{\alpha}$ consists of $[\Lambda: \alpha\Lambda]$ distinct points, where $[\Lambda: \alpha\Lambda]$ denotes the index of the subgroup $\alpha\Lambda$ in $\Lambda$.
\end{proof}

Let us now return to our original problem of classifying all configurations of four elliptic curves $C_{i}$, $i=1, 2, 3, 4$, on an Abelian surface $Y$ such that
\[
C_{1}\cap C_{2}\cap C_{3}\cap C_{4}=\{p\}
\]
for a point $p\in Y$ and
\[
C_{i}\cdot C_{j}=1
\]
for any $i\neq j\in\{1, 2, 3, 4\}$. Any such configuration will be referred as \emph{good configuration}. By translating the $C_{i}$, we can assume the point $p$ coincides with the origin in $Y$. By Facts \ref{fatto1} and \ref{fatto2}, we can assume $Y=C\times C$ with the curves $C_{1}$ and $C_{2}$ being the factors in this splitting of $Y$. Then Facts \ref{fatto3} and \ref{fatto4} imply that we must look for values of $\alpha$, say $\alpha_{1}$ and $\alpha_{2}$, such that
\[
C_{3}=C_{\alpha_{1}}, \quad C_{4}=C_{\alpha_{2}}.
\]

For a generic elliptic curve $C=\cc/\Lambda$, the only values of $\alpha$ such that $\alpha\Lambda=\Lambda$ are given by $\alpha=\pm 1$. If this is the case, note that $C_{1}\cap C_{-1}$ consists of four {distinct} points. These points are exactly the $2$-torsion points of the lattice $\Lambda$. In conclusion, for a generic elliptic curve $C$, the Abelian surface $Y=C\times C$ cannot support a good configuration.

It remains to treat the case of a nongeneric elliptic curve $C$. Recall that there are only two elliptic curves with nongeneric automorphism group {\cite[\S IV.4]{Har}}. These elliptic curves correspond to the lattices $\zz[1, i]=\zz+\zz i$, $\zz[1, \zeta]=\zz+\zz\zeta$ where $\zeta=e^{\frac{\pi i}{3}}$.

For the lattice $\zz[1, i]$, we have four choices of the value of $\alpha$ so that $\alpha\zz[1, i]=\zz[1, i]$:
\[
\alpha=1, i, i^{2}, i^{3}.
\]
It turns out that none of the possible choices involving these parameters gives a good configuration. To see this, it suffices to observe that the configuration
\[
w=0,\quad z=0,\quad w=z, \quad w=iz,
\]
is such that
\[
C_{1}\cap C_{i}=\{(0, 0), \quad (1/2+i/2, 1/2+i/2)\}.
\]
{Notice that $C_1 \cap C_i$ is two points precisely because $1-i \in \zz[i]$ has norm $2$.} Any other configuration is either {equivalent} to the one above {by a self-isomorphism of $Y$}, or fails to be a good configuration by completely analogous reasons {in the sense that some pair of curves will intersect in at least two points because the difference between their slopes will not be a unit of $\zz[i]$}.

For the lattice $\zz[1, \zeta]$, we have six choices of the value of $\alpha$ so that $\alpha\zz[1, \zeta]=\zz[1, \zeta]$:
\[
\alpha=1, \zeta, \zeta^{2}, \zeta^{3}, \zeta^{4}, \zeta^{5}.
\]
Observe that
\[
w=0,\quad z=0,\quad w=z, \quad w=\zeta z,
\]
is a good configuration. In fact, the curves $C_{1}$ and $C_{\zeta}$ intersect at the points whose $z$-values satisfy
\begin{equation}\label{equality}
(\zeta-1)z=0\mod{\zz[1, \zeta]}.
\end{equation}
Since $(\zeta-1)=\zeta^{2}$, we conclude that
\[
C_{1}\cap C_{\zeta}=\{(0, 0)\}.
\]

We claim that this is the only good configuration {(note this is implicit in Hirzebruch's work \cite{Hir84})}. First, consider the configuration given by
\[
w=0,\quad z=0,\quad w=z, \quad w=\zeta^{2} z.
\]
Observe that the curves $C_{1}$ and $C_{\zeta^{2}}$, not only meet at the origin, but also in other two distinct points. These points are the two distinct zeros of the Weierstrass $\wp$-function associated with the lattice $\zz[1, \zeta]$. More precisely, we have
\[
C_{1}\cap C_{\zeta^{2}}=\{(0, 0),\quad ((1-\zeta^{2})/3, (1-\zeta^{2})/3),\quad ((\zeta^{2}-1)/3, (\zeta^{2}-1)/3)\}.
\]
As the reader can easily verify {by finding an explicit self-isomorphism of the abelian surface}, any other configuration can be reduced to the above two or to the configuration
\[
w=0,\quad z=0,\quad w=z, \quad w=-z,
\]
which we already know not to be good.

In conclusion, we have the following result, which proves the uniqueness of Hirzebruch's example among Abelian surfaces that are a smooth toroidal compactification of a ball quotient of Euler number one.

\begin{theorem}\label{final}
Let $(X, D)$ be a toroidal compactification with $\overline{c}_{2}=1$ and $\kappa(X)=0$ for which $X$ is not birational to a bielliptic surface. Then $X$ is the blowup of an Abelian surface $Y=\cc^{2}/\Gamma$ with $\Gamma=\zz[1, \zeta]\times\zz[1, \zeta]$ and $\zeta=e^{\frac{\pi i}{3}}$. Moreover, {up to a self-isomorphism of $Y$} the blowdown divisor $C$ of $D$ is given by
\[
w=0,\quad z=0,\quad w=z, \quad w=\zeta z,
\]
where $(w, z)$ are the natural product coordinates on $Y$. In other words, $(X, D)$ is the toroidal compactification with $\overline{c}_2 = 1$ described by Hirzebruch.
\end{theorem}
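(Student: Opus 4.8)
The plan is to assemble the statement from the machinery already built in this section, so the proof is essentially an organization of Lemma \ref{Abelian} together with Facts \ref{fatto1}--\ref{fatto4}. By Lemma \ref{Abelian}, once $X$ is not birational to a bielliptic surface it must be the blowup of an Abelian surface $Y$ at a single point $p$, with $D$ the union of four disjoint smooth elliptic curves of self-intersection $-1$. First I would push these constraints down to the blowdown configuration $(Y, C)$. Writing $D_i = \pi^{*}C_i - E$ — which is forced because each smooth $C_i$ that meets $p$ does so with multiplicity one — the condition $D_i^{2} = -1$ gives $C_i^{2} = 0$, so each $C_i$ is a smooth elliptic curve, while $D_i \cdot D_j = 0$ translates into $C_i \cdot C_j = 1$ for $i \neq j$, and the shared exceptional curve $E$ records that all four $C_i$ pass through $p$. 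This is precisely the notion of good configuration set up above.

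Next I would trivialize the ambient surface. After translating so that $p$ is the origin, Facts \ref{fatto1} and \ref{fatto2} let me take $Y = C \times C$ for a single elliptic curve $C = \cc/\Lambda$, with $C_1$ and $C_2$ the two coordinate factors. Facts \ref{fatto3} and \ref{fatto4} then say the remaining two curves must have the form $C_{\alpha} = \{w = \alpha z\}$ with $\alpha \Lambda \subseteq \Lambda$, and the intersection requirement $C_i \cdot C_j = 1$ becomes the purely algebraic condition that $\alpha_1 \Lambda = \Lambda$, $\alpha_2 \Lambda = \Lambda$, and $(\alpha_1 - \alpha_2)\Lambda = \Lambda$; equivalently, $\alpha_1$, $\alpha_2$, and their difference must all be units of the endomorphism ring of $C$.

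The core of the argument is then the case split over the complex multiplication type of $C$. For generic $C$ the only units are $\pm 1$, and since $C_1 \cap C_{-1}$ is the set of four $2$-torsion points rather than a single point, no good configuration exists. For $C = \cc/\zz[1, i]$ the units are $\{1, i, i^{2}, i^{3}\}$, and I would verify, as in the displayed computation, that each admissible pair either violates $(\alpha_1 - \alpha_2)\Lambda = \Lambda$ or is isomorphic to one that does. Finally, for $C = \cc/\zz[1, \zeta]$ the units are the six powers of $\zeta$, and the identity $\zeta - 1 = \zeta^{2}$ shows that the pair $\alpha_1 = 1$, $\alpha_2 = \zeta$ satisfies all three conditions, yielding the configuration $w = 0$, $z = 0$, $w = z$, $w = \zeta z$.

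The hard part will not be any isolated computation but the bookkeeping in this last case: I must make the ``up to isomorphism'' reductions genuinely exhaustive, checking — via the contrasting intersection counts $C_1 \cap C_{\zeta^{2}}$ (three points) and $C_1 \cap C_{-1}$ (four points) — that every other unit pair collapses to the surviving configuration rather than merely exhibiting one pair that works. Once uniqueness is established, the identification of $(X, D)$ with Hirzebruch's compactification is immediate, which completes the proof.
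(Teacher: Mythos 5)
Your proposal is correct and follows essentially the same route as the paper: reduce via Lemma \ref{Abelian} to a ``good configuration'' of four smooth elliptic curves with pairwise intersection one through a single point, use Facts \ref{fatto1}--\ref{fatto4} to normalize $Y = C \times C$ with the extra curves of the form $w = \alpha z$ for units $\alpha$, and then run the case analysis over the endomorphism ring (generic, $\zz[1,i]$, $\zz[1,\zeta]$), with only the Eisenstein case surviving because $\zeta - 1 = \zeta^2$ is a unit. Your explicit translation $D_i = \pi^*C_i - E$ yielding $C_i^2 = 0$ and $C_i \cdot C_j = 1$ is just a slightly more formal packaging of what the paper does in \S\ref{zero}, and your closing remark correctly identifies the exhaustiveness of the unit-pair check as the only delicate point.
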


\section{The Bielliptic Case and Four More Examples}\label{Bagnera}

Recall that a \emph{bielliptic} surface is a minimal surface of Kodaira dimension zero and irregularity one. As shown at the beginning of the last century by Bagnera and de Franchis \cite{Bd07}, all such surfaces are finite quotients of products of elliptic curves. More precisely, we have the following classification theorem. For a modern treatment we refer to \cite[Ch.\ VI]{Bea}. 

\begin{theorem}[Bagnera--de Franchis, 1907]\label{bagnera}
{Let $X$ be a bielliptic surface. Then there are elliptic curves $E_{\lambda}$ and $E_{\tau}$ associated with the lattices $\zz[1, \lambda]$ and $\zz[1, \tau]$ such that $X$ is biholomorphic to $(E_{\lambda}\times E_{\tau})/G$, where G is a group of translations of $E_{\tau}$ that acts on $E_{\lambda}$ with $E_{\lambda}/G=\pp^{1}$. Moreover,} $G$ has one of the following types:
\begin{enumerate}

\item $G=\zz / 2 \zz$ acting on $E_{\lambda}$ by $x\rightarrow -x$;

\item $G=\zz / 2 \zz\times\zz / 2 \zz$ acting on $E_{\lambda}$ by
\[
x\rightarrow -x \quad\textrm{and}\quad x\rightarrow x+\alpha_{2},
\]
where $\alpha_{2}$ is a $2$-torsion point;

\item $G=\zz / 4 \zz$ acting on $E_\lambda$ by $x\rightarrow \lambda x$, where $\lambda = i$;

\item $G=\zz / 4 \zz\times \zz / 2 \zz$ acting on $E_\lambda$ by
\[
x\rightarrow \lambda x \quad\textrm{and}\quad x\rightarrow x+\frac{1+\lambda}{2},
\]
where $\lambda = i$;

\item $G=\zz / 3 \zz$ acting on $E_{\lambda}$ by $x\rightarrow \lambda x$, where $\lambda=e^{\frac{2\pi i}{3}}$;

\item $G=\zz / 3 \zz\times \zz / 3 \zz$ acting on $E_{\lambda}$ by
\[
x\rightarrow \lambda x \quad\textrm{and}\quad x\rightarrow x+\frac{1-\lambda}{3},
\]
where $\lambda=e^{\frac{2\pi i}{3}}$;

\item $G=\zz / 6 \zz$ acting on $E_{\lambda}$ by $x\rightarrow \zeta x$, where $\lambda=e^{\frac{2\pi i}{3}}$ and $\zeta=e^{\frac{\pi i}{3}}$.

\end{enumerate}
\end{theorem}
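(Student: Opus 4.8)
The plan is to recover the structure theorem from the Enriques--Kodaira classification together with an analysis of the Albanese fibration. First I would record the numerical invariants forced by $\kappa(S)=0$ and $q(S)=1$: one has $p_g=0$, $b_1=2$, $c_1^2=c_2=0$, and $K_S$ is a torsion class. Since $q(S)=1$, the Albanese variety $B=\mathrm{Alb}(S)$ is an elliptic curve and the Albanese map $a\colon S\to B$ is surjective. Adjunction together with $K_S$ numerically trivial shows that a general fiber $F$ of $a$ is a smooth elliptic curve, and the fibration formula $c_2(S)=e(B)e(F)+\sum_b\big(e(F_b)-e(F)\big)$ forces $\sum_b e(F_b)=0$ because $e(B)=e(F)=0$; as every term is nonnegative, $a$ has no fiber of positive Euler characteristic, so every fiber is smooth elliptic (possibly occurring as the reduction of a multiple fiber).

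Because $a$ has no singular fibers, the $j$-invariant defines a holomorphic map from the complete curve $B$ to $\mathbb{A}^1$, hence is constant, so $a$ is isotrivial with all fibers isomorphic to a fixed elliptic curve $E_\lambda$. I would then invoke the standard structure theory of isotrivial elliptic fibrations: after a finite \'etale base change $E_\tau\to B$ the pullback trivializes, yielding $S\cong(E_\lambda\times E_\tau)/G$ with $G=\mathrm{Gal}(E_\tau/B)$ acting on $E_\tau$ by translations (the deck group of an \'etale cover of an elliptic curve, hence freely) and on $E_\lambda$ through automorphisms recording the monodromy. Since $G$ acts freely on $E_\tau$, the diagonal action on the product is automatically free, the quotient is smooth, and $G$ is a finite abelian group embedding in the torsion of $E_\tau$, so $G\cong \zz/a\zz\times\zz/b\zz$. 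The hypothesis $q(S)=1$ is exactly what forces the $E_\lambda$-action to have nontrivial linear part: the invariant holomorphic $1$-forms on $E_\lambda\times E_\tau$ are spanned by $dz_\lambda$ and $dz_\tau$, and $G$ fixes $dz_\tau$ while scaling $dz_\lambda$ by its rotation character, so $q(S)=1$ precisely when that character is nontrivial. A nontrivial linear part has fixed points on $E_\lambda$, whence $E_\lambda/G=\pp^1$, as stated.

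The remaining and most delicate step is the enumeration. The linear part of the action is the image $\overline{G}$ of $G$ in the point-stabilizer subgroup $\mathrm{Aut}_0(E_\lambda)$, which is cyclic of order $2$, $4$, or $6$ according as $j(E_\lambda)$ is arbitrary, equal to $1728$ (the curve with $\lambda=i$), or equal to $0$ (the curve with $\lambda=e^{2\pi i/3}$); thus $\overline{G}$ is cyclic of order $n\in\{2,3,4,6\}$ and this choice already constrains $E_\lambda$. For each $n$ I would list the finite abelian groups $G$ that (i) surject onto $\overline{G}$, (ii) embed into the translations of some $E_\tau$, and (iii) act on $E_\lambda$ (by a rotation of order $n$ together with translations) with quotient $\pp^1$; condition (iii) is an orbifold-Euler-characteristic computation for $E_\lambda\to E_\lambda/G$, satisfied for the cyclic rotation groups of orders $2,3,4,6$ with signatures $(2,2,2,2)$, $(3,3,3)$, $(2,4,4)$, $(2,3,6)$, and it is here that the explicit translation parameters $\alpha_2$, $\tfrac{1+\lambda}{2}$, and $\tfrac{1-\lambda}{3}$ are pinned down as the unique extensions of $\overline{G}$ by translations compatible with a free product action and a rational quotient. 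Carrying out this bookkeeping yields exactly the seven types, and I expect the verification of the noncyclic cases (2), (4), and (6)---where one must simultaneously check that the extended action descends with $\pp^1$ quotient and that the translation part remains a free translation action on $E_\tau$---to be the main obstacle.
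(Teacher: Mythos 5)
The paper does not prove this statement: it is the classical Bagnera--de Franchis theorem, quoted with a pointer to Beauville's book (Ch.\ VI of \cite{Bea}) for a modern treatment, so there is no internal argument to compare yours against. Your sketch follows the standard modern route (numerical invariants, Albanese fibration, isotriviality, \'etale trivialization, enumeration of the abelian group $G$), and most of it is sound: the invariant computation, the Euler-number argument showing every Albanese fiber is smooth elliptic, the $j$-invariant argument for isotriviality, the observation that a fixed-point-free automorphism of an elliptic curve is a translation, and the identification of $q=1$ with nontriviality of the rotational character are all correct.

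Two points deserve attention. First, the step you label ``standard structure theory of isotrivial elliptic fibrations''---that the fibration trivializes after a finite \'etale base change---is false for isotrivial elliptic fiber bundles in general: primary Kodaira surfaces are elliptic bundles over elliptic curves with no singular fibers that never trivialize after any finite cover. The correct statement requires the total space to be K\"ahler (equivalently, the classifying cohomology class of the torsor obtained after killing the finite rotational monodromy must be torsion), and this is exactly where the real content of the structure theorem sits; as written, your proposal assumes the hard part at this step rather than proving it. Second, the translation parameters $\alpha_2$, $\tfrac{1+\lambda}{2}$, $\tfrac{1-\lambda}{3}$ in cases (2), (4), (6) are not pinned down by an orbifold Euler characteristic computation. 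They come from commutativity of $G$: if $g$ acts on $E_\lambda$ by $x\mapsto\alpha x$ and $t$ by $x\mapsto x+\tau_0$, then $gtg^{-1}$ is translation by $\alpha\tau_0$, so abelianness forces $(\alpha-1)\tau_0=0$, and $\ker(\alpha-1)$ on $E_\lambda$ has order $4$, $2$, $3$, $1$ for $\alpha=-1, i, e^{2\pi i/3}, e^{\pi i/3}$ respectively. This one computation produces exactly the seven types, explains why the $\zz/6\zz$ case admits no noncyclic enlargement, and (together with the fact that $G$ embeds in the $2$-generated torsion of $E_\tau$ and that any lift of $x\mapsto -x$ squares to the identity on $E_\lambda$) disposes of the residual bookkeeping more cleanly than the signature analysis you propose.
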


{Note that the action of $G$ on $Y = E_\lambda \times E_\tau$ is clearly free, since $G$ acts on $E_\tau$ by translations, so the map $Y \to X$ is an \'etale cover.} We now address the existence and uniqueness of toroidal compactifications of Euler number one that are birational to a bielliptic surface. First, observe that if such examples exist then they must be the blowup at just one point of a bielliptic surface by arguments in \S \ref{zero}. Second, we have the following.

\begin{lemma}\label{oneortwocusps}
Suppose that $X$ is the blowup of a bielliptic surface at exactly one point, and that $(X, D)$ is a smooth toroidal compactification of a complex hyperbolic manifold $M = X \smallsetminus D$. Then $M$ has either one or two cusps.
\end{lemma}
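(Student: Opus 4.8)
The plan is to combine the combinatorial restrictions on $D$ obtained in Section~\ref{zero} with the fact that a bielliptic surface has Picard number two, and then rule out the cases of three and four cusps by a short Hodge-index computation. First I would recall the bookkeeping: since $M = X\smallsetminus D$ has $\overline{c}_2 = 1$ and $X$ is the blow-up of the bielliptic surface $Y$ at a single point $p$, the number of cusps equals the number $k$ of components $D_1,\dots,D_k$ of $D$, with $-\sum_i D_i^2 = 4$ and $k \le 4$. Writing $C_i$ for the blow-down of $D_i$ under the blow-up map $\pi\colon X \to Y$ and $r_i$ for the multiplicity of $C_i$ at $p$, the analysis leading to \eqref{list} gives the dictionary $D_i^2 = -1 \leftrightarrow (C_i^2,r_i)=(0,1)$ with $C_i$ smooth elliptic, and $D_i^2 = -2,-3,-4 \leftrightarrow (C_i^2,r_i) = (2,2),(6,3),(12,4)$ with $C_i$ singular. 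Since $\pi^* C_i = D_i + r_i E$ for $E$ the exceptional divisor, for $i\neq j$ one computes $D_i\cdot D_j = C_i\cdot C_j - r_i r_j$, and as the $D_i$ are pairwise disjoint this yields
\[
C_i\cdot C_j = r_i r_j .
\]

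The key geometric input is that a bielliptic surface $Y$ has $p_g = 0$, so $b_2(Y) = h^{1,1}(Y) = 2$ and the Picard number is $\rho(Y) = 2$; by the Hodge index theorem the intersection form on $\mathrm{NS}(Y)\otimes\qq$ has signature $(1,1)$. I would then observe that in both the three-cusp configuration $(D_i^2) = (-1,-1,-2)$ and the four-cusp configuration $(-1,-1,-1,-1)$ there are two components, say $C_1$ and $C_2$, with $C_1^2 = C_2^2 = 0$ and $r_1 = r_2 = 1$, hence $C_1\cdot C_2 = 1$. Their Gram matrix $\left(\begin{smallmatrix}0&1\\1&0\end{smallmatrix}\right)$ has determinant $-1$, so $C_1$ and $C_2$ form a $\qq$-basis of $\mathrm{NS}(Y)\otimes\qq$.

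It then suffices to test a third component $C_3$ against this basis: writing $C_3 \equiv a C_1 + b C_2$, the relations $a = C_3\cdot C_2$ and $b = C_3\cdot C_1$ are forced by $C_i\cdot C_j = r_i r_j$, and one recomputes $C_3^2 = 2ab$. In the four-cusp case $r_3 = 1$ gives $C_3 \equiv C_1 + C_2$ and $C_3^2 = 2$, contradicting $C_3^2 = 0$; in the three-cusp case $r_3 = 2$ gives $C_3 \equiv 2C_1 + 2C_2$ and $C_3^2 = 8$, contradicting $C_3^2 = 2$. Either way we reach a contradiction, leaving only one or two cusps. The only steps requiring genuine care are the translation of the blow-up data into the intersection numbers $C_i\cdot C_j = r_i r_j$ and the verification that $\rho(Y) = 2$, which is exactly what forces $C_3$ into the span of $C_1$ and $C_2$; once these are in place the contradiction is a one-line linear-algebra computation.
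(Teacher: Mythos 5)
Your proof is correct, but it takes a genuinely different route from the paper's. The paper disposes of this lemma in two sentences: the Picard number of a bielliptic surface is two, so the blowup $X$ has Picard number three, and then it invokes \cite[Thm.\ 3.18]{DiCerbo2}, which bounds the number of cusps of a smooth toroidal compactification by the Picard rank minus one. You instead prove the bound directly in this special case: from $\pi^*C_i = D_i + r_iE$ and the disjointness of the $D_i$ you extract $C_i\cdot C_j = r_ir_j$ (this checks out, as does the dictionary $D_i^2 = -r_i$, $C_i^2 = r_i(r_i-1)$ coming from \eqref{genus}), and then you use $\rho(Y)=2$ to see that in the three- and four-cusp configurations two of the classes $C_1, C_2$ with $C_1^2 = C_2^2 = 0$, $C_1\cdot C_2 = 1$ already span $\mathrm{NS}(Y)\otimes\qq$, forcing $C_3^2 = 2(C_3\cdot C_1)(C_3\cdot C_2)$, which contradicts $C_3^2 = r_3(r_3-1)$ in both cases. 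Both arguments ultimately rest on the same input, $\rho(Y)=2$; the paper's is shorter but outsources the work to an external effectivity theorem, while yours is self-contained and elementary, and as a bonus it anticipates the numerical bookkeeping in $\mathrm{Num}(Y)$ (the relation $C^2 = 2k_1k_2$ of \eqref{biellipticselfintersection}) that the paper only introduces afterwards in Lemmas \ref{first2cusp}--\ref{Areduction}. The one cosmetic point: the appeal to the Hodge index theorem is unnecessary — you only need that the Gram matrix $\left(\begin{smallmatrix}0&1\\1&0\end{smallmatrix}\right)$ is nondegenerate and that $\mathrm{NS}(Y)\otimes\qq$ has dimension two.
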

\begin{proof}
From \S \ref{zero}, $M$ has between one and four cusps. Since the Picard number of a bielliptic surface is two, the Picard number of the blowup is then three. Since the rank of the Picard group of $X$ is three, by {Theorem \ref{thm:CuspCount}} we have that $X$ can at most have two cusps.
\end{proof}

The problem is then reduced to the study of the existence of certain singular elliptic curves $C_i$ as in {Equation} \eqref{list} on a bielliptic surface. Regarding these possibilities, let us observe the following, which will allow us to make further reductions.

\begin{lemma}\label{fatto}
Let $Y$ be bielliptic and $\pi: E_{\lambda}\times E_{\tau}\rightarrow Y$ be the associated \'etale cover as in Theorem \ref{bagnera}. Suppose that $C_i$ is a curve from {Equation} \eqref{list} with a unique regular singular point of order $r \ge 2$. Then $\pi^{-1}(C_i)$ is the union of distinct smooth elliptic curves, and exactly $r$ of them pass through each lift of the singular point of $C_i$. In fact, $\pi^{-1}(C_i)$ contains exactly $r$ distinct irreducible components, and the stabilizer $G_{C_i'}$ of any irreducible component of $\pi^{-1}(C_i)$ has order $d/r$, where $d$ is the degree of $\pi$. In particular, $r$ divides $d$.
\end{lemma}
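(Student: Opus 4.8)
The plan is to pull the whole configuration back along the étale cover $\pi\colon \tilde Y=E_\lambda\times E_\tau\to Y$ of Theorem~\ref{bagnera} and to analyze $\pi^{-1}(C_i)$ on the abelian surface $\tilde Y$. First I would record the two structural facts that make this work. The group $G$ acts \emph{freely} on $\tilde Y$: for $g\neq e$ the translation part on $E_\tau$ is nonzero (otherwise $g$ fixes points of the form $(0,y)$), so $\pi$ is Galois of degree $d=|G|$ and $\pi^{-1}(p)$ is a single free $G$-orbit of exactly $d$ points $\tilde p_1,\dots,\tilde p_d$. Since $\pi$ is unramified, $\pi^{-1}(C_i)=\pi^{*}C_i$ is reduced, and $\pi$ identifies a neighborhood of each $\tilde p_k$ in $\pi^{-1}(C_i)$ with a neighborhood of $p$ in $C_i$. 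Here ``regular singular point of order $r$'' means an ordinary $r$-fold point (the single blowup in \S\ref{zero} resolves it to the smooth curve $D_i$, so the $r$ local branches are smooth with distinct tangents); hence $\pi^{-1}(C_i)$ is smooth away from the $\tilde p_k$ and has an ordinary $r$-fold point at each of them.

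Next I would show that every irreducible component of $\pi^{-1}(C_i)$ is a smooth elliptic curve. Because normalization commutes with the étale base change $\tilde Y\to Y$, the normalization of $\pi^{-1}(C_i)=C_i\times_Y\tilde Y$ is $D_i\times_Y\tilde Y$, which is finite étale of degree $d$ over the elliptic curve $D_i$ and hence a disjoint union of smooth elliptic curves. Thus the components of $\pi^{-1}(C_i)$ correspond bijectively to these; each has geometric genus one and its only singularities are ordinary multiple points over the $\tilde p_k$. Suppose a component $C'$ were singular. Since $K_{\tilde Y}=0$, adjunction gives $(C')^{2}=2p_a(C')-2$, and writing the singular points with multiplicities $b_t\ge 2$ together with $p_a(C')=1+\sum_t\binom{b_t}{2}$ (geometric genus one) yields $(C')^{2}=\sum_t b_t(b_t-1)>0$. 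This contradicts Remark~\ref{multiple}, which forces $(C')^{2}>\sum_t b_t(b_t-1)$ for a singular irreducible curve of positive self-intersection on an abelian surface. Hence each component is smooth with $(C')^{2}=0$, i.e.\ a smooth elliptic curve. In particular two branches at a common $\tilde p_k$ cannot lie on one component, so the $r$ branches at each lift lie on $r$ \emph{distinct} components; this proves the first two assertions.

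Finally I would count the components and compute the stabilizer. Since $\pi^{-1}(C_i)/G=C_i$ is irreducible, $G$ permutes the components transitively; they form a single orbit, so $|G_{C_i'}|=d/c$ where $c$ is the number of components, and $C_i'\to D_i$ is the étale quotient by $G_{C_i'}$. It remains to prove $c=r$, equivalently that every component meets each of the $d$ lifts: if so, every lift meets all $c$ components, and since exactly $r$ components pass through a given lift we get $c=r$, whence $|G_{C_i'}|=d/r$ and $r\mid d$.

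I expect this last step to be the main obstacle. The soft arguments only give $r\le c\le d$ — the $r$ distinct branch-directions at one lift force at least $r$ components, while $c=[G:G_{C_i'}]\le d$ — and the incidence count between components and lifts is self-consistent for every intermediate value, so the exact equality must come from the geometry of the free $G$-action. Concretely, the $d$ lifts all lie in the single fibre $E_\lambda\times(y_0+T)$ of $\tilde Y\to \mathrm{Alb}(Y)=E_\tau/T$, where $T\subset E_\tau$ is the order-$d$ translation subgroup, and each component is a translate of a one-dimensional subtorus of $\tilde Y$. The plan is to show that such a subtorus-translate meets every lift in this fibre, equivalently that the monodromy image $\mathrm{im}\!\left(\pi_1(D_i)\to G\right)$ has index exactly $r$, using the explicit $G$-actions listed in Theorem~\ref{bagnera}. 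As there are finitely many types and, for each, only the finitely many curves from \eqref{list} to consider, this reduces to a direct verification; carrying it out (uniformly or case by case) is the crux of the lemma.
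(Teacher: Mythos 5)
Your proposal tracks the paper's proof closely through the first two assertions: freeness of the $G$-action, reducedness of $\pi^{-1}(C_i)$, smoothness of every irreducible component (via Remark \ref{multiple} together with the fact that each component is an \'etale cover of the normalization $D_i$), the conclusion that exactly $r$ distinct components pass through each lift of $p$, and transitivity of the $G$-action on the set of components, giving $|G_{C_i'}| = d/c$ with $c$ the number of components. All of that is correct and is essentially the paper's argument. But you stop exactly where the main content of the lemma begins: you obtain only $r \le c \le d$ and explicitly defer the equality $c = r$ --- equivalently $|G_{C_i'}| = d/r$ and $r \mid d$ --- to an unexecuted case-by-case monodromy computation over the Bagnera--de Franchis list. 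A plan to verify finitely many cases is not a verification of any of them, so as written the proposal does not establish the assertions ``exactly $r$ distinct irreducible components'' and ``stabilizer of order $d/r$''; these are precisely the conclusions used later (Lemmas \ref{second2cusp} and \ref{first1cusp}, and the analyses in \S\ref{zeta3} and \S\ref{z3z3}), so the gap is not cosmetic.

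The paper closes this step with a uniform geometric argument requiring no case analysis. Since each point of $\pi^{-1}(p)$ lies on exactly $r$ smooth components, the $G$-action identifies exactly $r$ components within each connected component of $\pi^{-1}(C_i)$, so it suffices to prove that $\pi^{-1}(C_i)$ is connected. Connectedness follows because $\pi^{-1}(C_i)$ is a reduced divisor of positive self-intersection meeting every irreducible curve positively, hence ample by the Nakai-type argument of Lemma \ref{Nakai}, and the support of a reduced ample divisor is topologically connected. This is the missing idea: your reformulation (each subtorus-translate must meet every lift of $p$ in the relevant fiber) is the right target and is equivalent to connectedness here, but you need the ampleness input --- or a completed monodromy computation --- to reach it.
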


\begin{proof}
Let $d = |G|$ be the degree of $\pi$. Then there are exactly $d$ points on $E_{\lambda}\times E_{\tau}$ above the singular point $p$ of $C_i$. Consider an irreducible component $C_i'$ of $\pi^{-1}(C_i)$. Then Remark \ref{multiple} implies that $C_i'$ cannot be singular. Since $C_i$ is normalized by an elliptic curve, we see that $C_i'$ must be a smooth elliptic curve, hence $C_i'$ is an \'etale cover of the normalization of $C_i$.

Smoothness of each $C_i'$ implies there are exactly $r$ irreducible components of $\pi^{-1}(C_i)$ passing through any given point in $\pi^{-1}(p)$. To obtain a singularity of order exactly $r$ in the quotient, we see that the $G$-action on $E_{\lambda}\times E_{\tau}$ must identify exactly $r$ distinct smooth curves in each connected component of $\pi^{-1}(C_i)$. Thus, to prove the lemma, it suffices to show that $\pi^{-1}(C_i)$ is connected.

Since $C_i$ is an irreducible curve on $Y$ of positive self-intersection, it follows that $\pi^{-1}(C_i)$ has positive self-intersection. In fact, our {assumption that $C_i$ has positive self-intersection, along with the fact that $\pi$ is \'etale, implies} that $\pi^{-1}(C_i)$ is reduced and intersects each irreducible component $C_i'$ positively. {Then one sees exactly as in the proof of Lemma \ref{Nakai} that Nakai's criterion implies that $\pi^{-1}(C_i)$ is a reduced ample divisor.} It follows that the support of $\pi^{-1}(C_i)$ must be topologically connected {by Zariski's main theorem (e.g., applying \cite[Cor.\ III.11.3]{Har} to the linear system determined by the ample divisor)}, which proves the lemma.
\end{proof}

Now, we use the structure of the group $\mathrm{Num}(Y)$ {of divisors modulo numerical equivalence} to restrict the possible number of cusps even further. Let $Y$ be a bielliptic surface with associated group
\[
G = \zz / s \zz \times \zz / t \zz
\]
with $s \ge 2$ and $t \ge 1$. Set $d = |G| = s t$. Recall from \cite{Ser90} that $\mathrm{Num}(Y)$ has a $\zz$-basis consisting of $\frac{1}{s} A$ and $\frac{1}{t} B$, where $A$ and $B$ are the general fibers of the two fibrations of $Y$ associated with the coordinate projections of the Abelian variety from Theorem \ref{bagnera}. {Note that one projection is onto $E_{\lambda}/G=\pp^{1}$. Projection onto the elliptic curve $E_\tau/G$ is the Albanese fibration of $Y$ (see \cite[Ch.\ V, VIII]{Bea})}.

\begin{lemma}\label{first2cusp}
The case $C_1^{2} = C_2^2 = 2$ associated with $D_1^2 = D_2^2 = -2$ on a two-cusped manifold cannot occur.
\end{lemma}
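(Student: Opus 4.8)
The plan is to force a contradiction between the local multiplicity of $C_1$ and $C_2$ at the blow-up point and their global intersection number. First I would record the local picture. Since $\overline{c}_2 = 1$, the surface $X$ is the blow-up of the bielliptic surface $Y$ at a single point $p$, and $D_1, D_2$ are the proper transforms of $C_1, C_2$. As in \S\ref{zero}, $D_i^2 = C_i^2 - r_i^2$ where $r_i = \mult_{p}(C_i)$; with $C_i^2 = 2$ and $D_i^2 = -2$ this gives $r_i^2 = 4$, so each $C_i$ passes through $p$ with multiplicity exactly $r_i = 2$. Since $D_1$ and $D_2$ are smooth and the only modification is the blow-up at $p$, the unique singular point of each $C_i$ (which exists by \eqref{list}) must be $p$ itself; in particular $C_1$ and $C_2$ are both singular at the common point $p$.

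Next I would compute $C_1 \cdot C_2$ exactly using the numerical structure of $Y$. Writing $\overline{A} = \frac{1}{s} A$ and $\overline{B} = \frac{1}{t} B$ for the basis of $\mathrm{Num}(Y)$ from \cite{Ser90}, we have $\overline{A}^2 = \overline{B}^2 = 0$ because $A$ and $B$ are fibers, and $m := \overline{A} \cdot \overline{B}$ is a positive integer since the two fibrations are distinct. Expanding $[C_i] = a_i \overline{A} + b_i \overline{B}$ and using that $\overline{A}, \overline{B}$ are nef forces $a_i, b_i \ge 0$, while $C_i^2 > 0$ forces $a_i, b_i \ge 1$. Then $C_i^2 = 2 m\, a_i b_i = 2$ forces $m = a_i = b_i = 1$. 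Hence $C_1$ and $C_2$ are numerically equivalent to $\overline{A} + \overline{B}$, and $C_1 \cdot C_2 = (\overline{A} + \overline{B})^2 = 2$.

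Finally I would compare the two computations. As $C_1$ and $C_2$ are distinct irreducible curves each of multiplicity $2$ at $p$, the local intersection multiplicity satisfies $I_{p}(C_1, C_2) \ge \mult_{p}(C_1)\,\mult_{p}(C_2) = 4$. Since every local intersection multiplicity is nonnegative, $C_1 \cdot C_2 \ge I_{p}(C_1, C_2) \ge 4$, contradicting $C_1 \cdot C_2 = 2$. This rules out the configuration and proves the lemma.

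The crux is the second step: obtaining the exact value $C_1 \cdot C_2 = 2$ rather than merely a lower bound. The Hodge index theorem alone gives $(C_1 \cdot C_2)^2 \ge C_1^2 C_2^2 = 4$, i.e.\ only $C_1 \cdot C_2 \ge 2$; it is the Serrano basis that pins down the numerical class of each $C_i$, and hence the upper bound, which is exactly what clashes with the forced multiplicity-$2$ tangency of $C_1$ and $C_2$ at $p$.
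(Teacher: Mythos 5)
Your proof is correct and follows essentially the same route as the paper: both arguments pin down the numerical class of each $C_i$ as $\frac{1}{s}A + \frac{1}{t}B$ via Serrano's basis to get $C_1 \cdot C_2 = 2$, and contrast this with the local intersection at the common order-two singular point $p$ forcing $C_1 \cdot C_2 \ge 4$. Your version is marginally more careful in using the lower bound $I_p(C_1, C_2) \ge \mult_p(C_1)\mult_p(C_2)$ rather than asserting equality, but the mechanism of the contradiction is identical.
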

\begin{proof}
From the list given in {Equation} \eqref{list}, we must have two curves $C_{1}$ and $C_{2}$ with self-intersection two, each with a singular point of {order} two. Since $C_1$ and $C_2$ only intersect at the singular point, which has {order} two on each, $C_{1}\cdot C_{2}=4$. Up to numerical equivalence, any curve $C_i$ in $Y$ can be written as $C_i=\frac{k_{1}}{s}A+\frac{k_{2}}{t}B$ with $k_{1}, k_{2}\in \zz$, and then
\begin{equation}\label{biellipticselfintersection}
C_i^2 = 2 \frac{k_1 k_2}{s t} A \cdot B = 2 k_1 k_2.
\end{equation}
For $C_i^2 = 2$, the only possibility is then $k_{1}=k_{2}= \pm 1$, the negative case being excluded by $C_i \cdot A \ge 0$. Therefore, we must have
\[
C_{1}=C_{2}=\frac{1}{s}A+\frac{1}{t}B
\]
in $\mathrm{Num}(Y)$. Then $C_{1}\cdot C_{2}=2\neq4$, which is a contradiction.
\end{proof}

\begin{lemma}\label{second2cusp}
The case $C_1^{2}=0$, $C_2^2 = 6$ associated with $D_1^2 = -1$ and $D_2^2 = -3$ on a two-cusped manifold cannot occur unless $3$ divides $|G|$.
\end{lemma}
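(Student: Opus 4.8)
The plan is to reduce the statement to a direct application of Lemma \ref{fatto}, after first unwinding the geometry forced by the configuration. In the two-cusped case with $D_1^2 = -1$ and $D_2^2 = -3$, the blowdown divisor $C = C_1 \cup C_2$ consists of a smooth elliptic curve $C_1$ with $C_1^2 = 0$ together with the curve $C_2$ coming from the $n = 3$ line of \eqref{list}, so that $C_2^2 = 6$ and $C_2$ has a unique singular point $p$ of multiplicity $r = 3$. Since $D_2$, the proper transform of $C_2$, is a smooth elliptic boundary component, a single blowup at $p$ must resolve the singularity; together with the genus computation \eqref{genus} this forces $p$ to be an ordinary triple point, with three distinct tangent directions. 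Thus $C_2$ is precisely a curve from \eqref{list} with a unique regular singular point of order $r = 3$, which is exactly the hypothesis of Lemma \ref{fatto}.

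The key step is then to apply Lemma \ref{fatto} to $C_2$. Writing $\pi\colon E_{\lambda}\times E_{\tau}\to Y$ for the associated \'etale cover of degree $d = |G|$, the lemma gives that $\pi^{-1}(C_2)$ splits into exactly $r = 3$ distinct smooth elliptic components, all three of which pass through each point of $\pi^{-1}(p)$, and that the stabilizer in $G$ of each component has order $d/r = d/3$. Equivalently, since the quotient $C_2 = \pi^{-1}(C_2)/G$ is irreducible, $G$ must permute the three components transitively, so there is an element of $G$ inducing a $3$-cycle on them. Either formulation forces $d/3$ to be an integer, i.e. $3 \mid |G|$, which is the asserted conclusion; hence the configuration cannot occur when $3 \nmid |G|$.

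I expect the only genuine subtlety --- the ``obstacle'' --- to be checking that $C_2$ has an \emph{ordinary} triple point, so that Lemma \ref{fatto} genuinely applies rather than some worse singularity for which one blowup would not yield a smooth $D_2$; but this is already guaranteed by the requirement that $D_2$ be a smooth elliptic curve. I would also note that a purely numerical argument is insufficient here: writing $C_2 = \tfrac{1}{s}A + \tfrac{3}{t}B$ in $\mathrm{Num}(Y)$ (as forced by $C_2^2 = 6$ together with $C_1\cdot C_2 = 3$) gives an admissible integral class for every $s,t$, so the divisibility $3 \mid |G|$ must come from the orbit structure of the triple point, i.e. from Lemma \ref{fatto}. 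As a consistency check against Theorem \ref{bagnera}, the condition $3 \mid |G|$ eliminates exactly the bielliptic types with $|G| \in \{2,4,8\}$, namely types (1)--(4), leaving only types (5)--(7) as candidates for this configuration.
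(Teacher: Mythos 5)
Your proposal is correct and follows essentially the same route as the paper: the paper's proof also simply invokes Lemma \ref{fatto} for the degree-three regular singular point of $C_2$ and uses transitivity of the $G$-action on the components of $\pi^{-1}(C_2)$ to conclude $3\mid|G|$. Your additional remarks (that the triple point must be ordinary so that $D_2$ is smooth, and that a purely numerical argument in $\mathrm{Num}(Y)$ would not suffice) are accurate but not needed beyond what Lemma \ref{fatto} already packages.
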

\begin{proof}
In this case the curve $C_2$ has a regular singular point of {order} three. By Lemma \ref{fatto}, $\pi^{-1}(C_2)$ is the union of exactly $3 k$ smooth irreducible elliptic curves for some $k \ge 1$. Since $G$ must act transitively on these curves, the lemma follows.
\end{proof}

\begin{lemma}\label{first1cusp}
The case $C_1^{2}=12$ associated with $D_1^2 = -4$ on a one-cusped manifold cannot occur unless $|G|$ is divisible by $4$.
\end{lemma}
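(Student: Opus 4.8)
The plan is to reduce the statement immediately to Lemma \ref{fatto}, in exactly the same spirit as the proof of Lemma \ref{second2cusp}. First I would pin down which entry of the list \eqref{list} governs the one-cusped configuration. In the one-cusp case the compactifying divisor consists of a single component $D_1$ with $D_1^2 = -4$, and its blowdown $C_1$ on the bielliptic surface $Y$ is singular (a smooth elliptic curve would give $C_1^2 = 0$ and hence $D_1^2 = -1$ after blowing up a point lying on it). Since $D_1^2 = C_1^2 - r^2$, where $r$ is the multiplicity of the unique singular point, the only entry of \eqref{list} compatible with $D_1^2 = -4$ is $n = 6$, $C_1^2 = 12$, $r = 4$. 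Thus $C_1$ is a curve as in \eqref{list} carrying a unique regular singular point of order $r = 4$.

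Next I would simply invoke Lemma \ref{fatto}, which applies to $C_1$ verbatim once $r=4\ge 2$ is in hand. Its conclusion already contains the divisibility assertion that $r$ divides $d = |G|$, so taking $r = 4$ yields $4 \mid |G|$, which is precisely the claim. Equivalently, one may spell this out as in Lemma \ref{second2cusp}: the preimage $\pi^{-1}(C_1)$ under the \'etale cover $\pi : E_\lambda \times E_\tau \to Y$ splits into exactly $r = 4$ smooth elliptic components, which $G$ must permute transitively (the stabilizer of each having order $d/r$), forcing $4 = r$ to divide $d = |G|$.

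I do not expect any genuine obstacle here, since all the real work is already packaged inside Lemma \ref{fatto}; the only point to check is the elementary bookkeeping that $C_1^2 = 12$ forces the multiplicity $r = 4$. The upshot is that, among the Bagnera--de Franchis types listed in Theorem \ref{bagnera}, the one-cusped $D_1^2 = -4$ configuration can survive only for groups $G$ whose order is divisible by $4$, namely types (2), (3), and (4).
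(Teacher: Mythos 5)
Your proposal matches the paper's argument: the paper likewise identifies the one-cusped case with the $n=6$, $C_1^2=12$, $r=4$ entry of \eqref{list} and then notes that "the lemma follows exactly as Lemma \ref{second2cusp}," i.e., by applying Lemma \ref{fatto} and the transitivity of the $G$-action on the components of $\pi^{-1}(C_1)$ to conclude $4 \mid |G|$. Your bookkeeping that $D_1^2 = C_1^2 - r^2 = -4$ forces $r=4$ and your list of surviving Bagnera--de Franchis types are both correct.
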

\begin{proof}
In this case the curve $C_1$ has a regular singular point of {order} four. The lemma follows exactly as Lemma \ref{second2cusp}.
\end{proof}

This covers all three possible cases with one or two cusps. We now have two more lemmas that we prove useful.

\begin{lemma}\label{Breduction}
Let $Y$ be a bielliptic surface with associated group $G$, {and let $C_1$ be a curve with a singular point $p$ of order $r$. Suppose that $(X, D_1)$ is a smooth toroidal compactification, where $X$ is the blowup of $Y$ at $p$ and $D_1$ is the proper transform of $C_1$. Choose generators} $\frac{1}{s}A$ and $\frac{1}{t} B$ for $\mathrm{Num}(Y)$ as above. Given $\frac{k_1}{s} A + \frac{k_2}{t} B \in Num(Y)$ representing the numerical class of a curve $C_1$ with a singular point $p$ of order $r$, we must have $k_1 > \frac{r s}{|G|}$ { and $k_2 > \frac{r t}{|G|}$}.
\end{lemma}
\begin{proof}
We have $A \cdot B = |G|$. Recall that $B$ is numerically equivalent to a general fiber of the Albanese map. Then, considering a fiber of the Albanese map through $p$,
\[
C_1 \cdot B = \frac{k_1 |G|}{s} \ge r,
\]
where $r$ is the order of the singular point.

Now consider the case of equality. There must be a smooth fiber $B_0$ (i.e., not a multiple fiber) of the Albanese map passing through $p$, and this fiber will intersect $C_1$ transversally with intersection number $r$, and $B_0$ is disjoint from $C_1$ away from $p$. Now, suppose that $(X, D_1)$ is a smooth toroidal compactification, where $X$ is the blowup of $Y$ at $p$ and $D_1$ is the proper transform of $C_1$. Then the proper transform $E_0$ of $B_0$ to $X$ is an elliptic curve disjoint from $C_1$ in $X$. In particular, it defines an elliptic curve on the complex hyperbolic manifold $X \smallsetminus D$, which is a contradiction. {The analogous argument for the fibration of the bielliptic surface over $\pp^1$ gives the bound on $k_{2}$.}
\end{proof}

%\begin{lemma}\label{Areduction}
%With notation as in Lemma \ref{Breduction}, if $\frac{k_1}{s} A + \frac{k_2}{t} B \in Num(Y)$ represents the %numerical class of a curve $C_1$ with a singular point $p$ of order $r$, then $k_2 > \frac{r t}{|G|}$.
%\end{lemma}

Now we proceed to analyze the remaining possibilities.

\subsection{Initial reductions}\label{reductions1}

In this section, we show that the only bielliptic surfaces whose blowup can produce a smooth toroidal compactification of Euler number one are associated with the groups $\zz / 3 \zz$ and $\zz / 3 \zz \times \zz / 3 \zz$. The first case is immediate from the above.

\begin{corollary}\label{nozmod2}
No $\zz / 2 \zz$ bielliptic surface can produce a smooth toroidal compactification of Euler number one.
\end{corollary}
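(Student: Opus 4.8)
The plan is to read off the result directly from the three divisibility obstructions established in Lemmas~\ref{first2cusp}, \ref{second2cusp}, and \ref{first1cusp}, using Lemma~\ref{oneortwocusps} to bound the number of cusps. For a bielliptic surface of type~(1) in Theorem~\ref{bagnera} the associated group is $G = \zz / 2 \zz$, so $|G| = 2$. By Lemma~\ref{oneortwocusps}, any smooth toroidal compactification of Euler number one whose underlying space is the blowup of such a surface has either one or two cusps, and from the configuration list in \S\ref{zero} together with \eqref{list} these fall into exactly three cases: the one-cusped case with $C_1^2 = 12$ and a singular point of order $4$; the two-cusped case with $C_1^2 = C_2^2 = 2$, each singular point of order $2$; and the two-cusped case with $C_1^2 = 0$, $C_2^2 = 6$, where $C_2$ carries a singular point of order $3$.

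I would then dispatch these three cases in turn. The $(2,2)$ two-cusped configuration is excluded unconditionally by Lemma~\ref{first2cusp}. The $(0,6)$ two-cusped configuration is excluded by Lemma~\ref{second2cusp} unless $3$ divides $|G|$, and the one-cusped configuration is excluded by Lemma~\ref{first1cusp} unless $4$ divides $|G|$. Since $|G| = 2$ is divisible by neither $3$ nor $4$, both surviving configurations are ruled out, and no admissible configuration remains. Hence no $\zz / 2 \zz$ bielliptic surface can produce a smooth toroidal compactification of Euler number one.

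There is no genuine obstacle at this stage: the corollary is a bookkeeping consequence of the preceding lemmas, and the only thing being exploited is that $|G| = 2$ is too small to accommodate the singular points of order $3$ or $4$ forced by the two configurations not already killed by Lemma~\ref{first2cusp}. The conceptual content lies entirely upstream, in the proof via Lemma~\ref{fatto} that an order-$r$ singular point forces $r$ distinct smooth elliptic components in the étale cover $E_\lambda \times E_\tau \to Y$, so that transitivity of the $G$-action forces $r \mid |G|$. I would simply present the case analysis cleanly and invoke the three lemmas.
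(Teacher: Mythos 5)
Your proposal is correct and matches the paper's own proof, which likewise observes that Lemmas \ref{first2cusp}--\ref{first1cusp} force $3$ or $4$ to divide $|G|$, impossible for $|G|=2$. You have simply spelled out the case analysis in more detail than the paper's one-line argument.
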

\begin{proof}
By Lemmas \ref{first2cusp} - \ref{first1cusp}, we see that $3$ or $4$ divides $|G|$, which is not true.
\end{proof}

We now proceed to rule out the remaining cases.

\begin{proposition}\label{nozmod4}
No blowup of a $\zz / 4 \zz$ bielliptic surface leads to a smooth toroidal compactification of Euler number one.
\end{proposition}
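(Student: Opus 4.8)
The plan is to dispatch, one at a time, each cusp configuration left open by Lemma \ref{oneortwocusps}. For a $\zz/4\zz$ bielliptic surface the associated group is $G = \zz/4\zz$, so in the notation of Lemmas \ref{first2cusp}--\ref{Areduction} we have $s = 4$, $t = 1$, and $d = |G| = 4$. By Lemma \ref{oneortwocusps} the manifold $M = X \smallsetminus D$ has either one or two cusps, so it suffices to rule out the three configurations from \eqref{list} compatible with this bound: the two two-cusp cases $C_1^2 = C_2^2 = 2$ and $C_1^2 = 0,\ C_2^2 = 6$, together with the one-cusp case $C_1^2 = 12$.

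The two-cusp cases fall to the divisibility criteria already in hand. The configuration $C_1^2 = C_2^2 = 2$ is excluded outright by Lemma \ref{first2cusp}, independently of $G$. In the configuration $C_1^2 = 0,\ C_2^2 = 6$ the curve $C_2$ carries a regular singular point of order three, so Lemma \ref{second2cusp} forbids it unless $3 \mid |G|$; since $|G| = 4$, this case does not arise.

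The remaining and most substantive case, which I expect to be the main obstacle, is the one-cusp configuration $C_1^2 = 12$, where $C_1$ has a regular singular point of order $r = 4$. Here Lemma \ref{first1cusp} yields nothing, since $4 \mid |G| = 4$, so I would instead combine the numerical constraints of Lemmas \ref{Breduction} and \ref{Areduction} with the self-intersection formula. Writing the numerical class as $C_1 = \tfrac{k_1}{4} A + k_2 B$ with $k_1, k_2 \in \zz$, formula \eqref{biellipticselfintersection} gives $C_1^2 = 2 k_1 k_2$, so that $C_1^2 = 12$ forces $k_1 k_2 = 6$. On the other hand, Lemma \ref{Breduction} gives $k_1 > \tfrac{rs}{|G|} = 4$ and Lemma \ref{Areduction} gives $k_2 > \tfrac{rt}{|G|} = 1$, whence $k_1 \ge 5$ and $k_2 \ge 2$. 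This yields $k_1 k_2 \ge 10 > 6$, a contradiction. With all three cases eliminated, the proposition follows.
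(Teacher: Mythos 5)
Your proof is correct and follows essentially the same route as the paper: reduce to the one-cusp case $C_1^2 = 12$ via Lemmas \ref{first2cusp} and \ref{second2cusp}, write $C_1 = \frac{k_1}{4}A + k_2 B$ with $k_1 k_2 = 6$, and eliminate all possibilities using Lemmas \ref{Breduction} and \ref{Areduction}. The only cosmetic difference is that you combine the two bounds into $k_1 k_2 \ge 10 > 6$ rather than enumerating the four factorizations of $6$ and checking each one.
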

\begin{proof}
By Lemmas \ref{first2cusp} - \ref{first1cusp}, we must consider the one-cusped case. Here, the curve $C_1$ on $Y$ satisfies $C_1^{2}=12$, and has a regular singular point of {order} four. Write
\[
C_1=\frac{k_{1}}{4}A+k_{2}B
\]
as above. Then $C_1^2 = 2 k_1 k_2$, so we have four possibilities:
\begin{align*}
& k_{1}=1, \quad k_{2}=6, \quad C_1=\frac{1}{4}A+6B;\\ 
& k_{1}=2, \quad k_{2}=3, \quad C_1=\frac{1}{2}A+3B;\\ 
& k_{1}=3, \quad k_{2}=2,\quad C_1=\frac{3}{4}A+2B;\\ 
& k_{1}=6, \quad k_{2}=1,\quad C_1=\frac{3}{2}A+B.
\end{align*}

All four cases are impossible by applying Lemma \ref{Breduction}. {More precisely, we apply the bound on $k_1$ to the first three cases and the bound on $k_{2}$ to the last case.} This proves the proposition.
\end{proof}

\begin{proposition}
No blowup of a $\zz / 6 \zz$ bielliptic surface defines a smooth toroidal compactification of Euler number one.
\end{proposition}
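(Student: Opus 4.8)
The plan is to follow the same strategy as in Proposition \ref{nozmod4}, combining the divisibility restrictions from Lemmas \ref{first2cusp}--\ref{first1cusp} with the numerical bounds of Lemmas \ref{Breduction} and \ref{Areduction} to leave no admissible configuration. For a $\zz / 6 \zz$ bielliptic surface we have $G = \zz / 6 \zz$, so in the notation above $s = 6$, $t = 1$, and $d = |G| = 6$; the basis of $\mathrm{Num}(Y)$ is $\frac{1}{6} A$ and $B$, with $A \cdot B = 6$. First I would eliminate every case from \eqref{list} except one. The one-cusped case $C_1^2 = 12$ requires $4 \mid |G|$ by Lemma \ref{first1cusp}, but $4 \nmid 6$, so it cannot occur. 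The two-cusped case $C_1^2 = C_2^2 = 2$ is excluded on any bielliptic surface by Lemma \ref{first2cusp}. Since $3 \mid 6$, Lemma \ref{second2cusp} does \emph{not} rule out the remaining two-cusped case $C_1^2 = 0$, $C_2^2 = 6$ (with $D_1^2 = -1$ and $D_2^2 = -3$), so this is the only configuration that needs a separate argument.

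In this surviving case the curve $C_2$ has a unique regular singular point of order $r = 3$. I would write its numerical class as $C_2 = \frac{k_1}{6} A + k_2 B$ with $k_1, k_2 \in \zz$. By \eqref{biellipticselfintersection} we have $C_2^2 = 2 k_1 k_2 = 6$, hence $k_1 k_2 = 3$, and positivity of $C_2 \cdot A$ and $C_2 \cdot B$ forces $k_1, k_2 > 0$, so $(k_1, k_2) \in \{(1, 3), (3, 1)\}$. Now apply the two fibration bounds. Lemma \ref{Breduction} gives $k_1 > \frac{r s}{|G|} = \frac{3 \cdot 6}{6} = 3$, so $k_1 \ge 4$, while Lemma \ref{Areduction} gives $k_2 > \frac{r t}{|G|} = \frac{1}{2}$, so $k_2 \ge 1$. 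Together these force $k_1 k_2 \ge 4 > 3$, contradicting $k_1 k_2 = 3$. This eliminates the last configuration and proves the proposition.

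The argument is essentially a finite numerical check once the case analysis is in place, so I do not expect a serious obstacle; the delicate point is purely bookkeeping. The one step that must be handled with care is the strict inequality in Lemma \ref{Breduction}: it is exactly the exclusion of the equality case (which would produce a smooth fiber of the Albanese map whose proper transform is an elliptic curve disjoint from $D$, contradicting hyperbolicity of $X \smallsetminus D$) that pushes $k_1$ from $k_1 \ge 3$ to $k_1 \ge 4$ and thereby closes the gap. Without this strict improvement the class $(k_1, k_2) = (3, 1)$ would survive, so verifying that Lemma \ref{Breduction} applies verbatim to $C_2$ here is the crux of the proof.
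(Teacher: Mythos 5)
Your case reduction is correct and matches the paper's: Lemmas \ref{first2cusp} and \ref{first1cusp} leave only the two-cusped configuration with $C_1^2 = 0$, $C_2^2 = 6$, and the class $(k_1,k_2)=(1,3)$ is indeed killed by Lemma \ref{Breduction} since $C_2 \cdot B = 1 < 3$. The gap is in your elimination of $(k_1,k_2)=(3,1)$, i.e.\ $C_2 \equiv \frac{1}{2}A + B$. Here $C_2 \cdot B = 3 = r$ exactly, and you invoke the strict inequality of Lemma \ref{Breduction} to push $k_1$ past $3$. But the proof of that strict inequality only produces a contradiction when the boundary divisor of the compactification is the proper transform of the singular curve alone: the equality case manufactures an elliptic curve $E_0$ (the proper transform of the Albanese fiber $B_0$ through $p$) disjoint from the proper transform of $C_2$, and concludes that $E_0$ lives in the hyperbolic manifold $X \smallsetminus D$. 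In the two-cusped configuration at hand, $D = D_1 + D_2$ and the second component $C_1$ is a smooth elliptic curve with $C_1^2 = 0$ and $C_1 \cdot C_2 = 3$ through $p$; numerically $C_1$ must be $B$ or a half-fiber of $\pi_1$, and in the first case $C_1$ \emph{is} the Albanese fiber $B_0$, so $E_0 = D_1$ is a boundary component and no contradiction arises. That this failure is real, not hypothetical, is shown by the $\zz/3\zz$ case: there the class $C_2 \equiv A + B$ also satisfies $C_2 \cdot B = 3 = r$ with $k_1 = \frac{rs}{|G|}$, yet it supports a genuine curve whose blowup yields two of the five examples. So the strict form of Lemma \ref{Breduction} cannot be applied ``verbatim'' in equality cases with two cusps; the paper signals this by saying explicitly that for $C_2 \cdot B = 3$ ``we do not have the previous obstruction.''

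Consequently the class $\frac{1}{2}A + B$ must be excluded by a direct geometric argument, which is what the paper does and your proposal omits. One passes to the degree-six \'etale cover $E_\rho \times E_\tau \to Y$: by Lemma \ref{fatto} the preimage of $C_2$ is three smooth elliptic curves $E_1, E_2, E_3$, each stabilized by the order-two element $\varphi^3$ of $\zz/6\zz$, and each a $1$-section $\{w = \alpha_i z + a_i\}$ of the projection to $E_\tau$ because $C_2 \cdot B = 3$. The condition $\varphi^3(E_1) = E_1$ forces $2\alpha_1 z \equiv -\alpha_1 \gamma/2 - 2a_1$ for all $z$, hence $\alpha_1 = a_1 = 0$, and likewise for all three curves, which is absurd. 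You correctly identified the strict inequality as the crux, but asserting rather than verifying it is exactly where the argument breaks; the missing covering-space computation is the substance of the paper's proof.
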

\begin{proof}
In this case, we must consider the case where there are two curves $C_1, C_2$ on $Y$ such that $C_1$ is a smooth elliptic curve with $C_1^2 = 0$, $C_2^{2}=6$, and $C_2$ has a regular singular point of {order} three. Write
\[
C_2=\frac{k_{1}}{6}A+k_{2}B
\]
with the above notation. Then, $C_2^2 = 2 k_1 k_2$, so $C_2^2 = 6$ leaves us with the following possibilities:
\begin{align*}
& k_{1}=1, \quad k_{2}=3, \quad C_2=\frac{A}{6}+3B;\\ 
& k_{1}=3, \quad k_{2}=1, \quad C_2=\frac{A}{2}+B.
\end{align*}
The first case is impossible by Lemma \ref{Breduction}.\\

In the second case $C_2\cdot B=3$. Thus, assume we can find a curve $C_2$ with a singular point of order three in the numerical class of $\frac{A}{2}+B$. Let $\pi: E_{\rho}\times E_{\tau}\rightarrow Y$ be the covering of degree six. Moreover, we can assume that the $\zz / 6 \zz$ action is given by the group generated by
\[
\varphi(w, z)=\left(\zeta w, z+\frac{\gamma}{6}\right)
\]
for some $\gamma\in\zz[1, \tau]$ {and $\zeta = e^{\frac{\pi i}{3}}$}.

Using Lemma \ref{fatto}, $\pi^{-1}(C_2)$ must consist of a union of smooth elliptic curves. Moreover, $\pi^{-1}(C_2)$ consists of three smooth elliptic curves $E_{1}$, $E_{2}$, $E_{3}$ intersecting in {the six lifts of the points on $Y$ where $C_1$ and $C_2$ meet}. Thus, the automorphism group $\zz / 6 \zz$ must act transitively on these elliptic curves with isotropy group $\{1, \varphi^{3}\}$. Next, observe that, since $C_2\cdot B=3$, we have that $E_{i}\cdot F=1$ for any $i=1, 2, 3$ and any fiber $F$ of the map $E_{\rho}\times E_{\tau}\rightarrow E_{\tau}$.

Therefore, for any $i=1, 2, 3$ we can write $E_{i}=\{w=\alpha_{i}z+a_{i}\}$ for some appropriate complex numbers $\alpha_{i}$ and $a_{i}$. We then must have $\varphi^{3}E_{1}=E_{1}$ which then implies that
\[
\left(-\alpha_{1}z-a_{1}, z+\frac{\gamma}{2}\right)=\left(\alpha_{1}\left(z+\frac{\gamma}{2}\right)+a_{1}, z+\frac{\gamma}{2}\right).
\]
We then have $2\alpha_{1}z=-\alpha_{1} \frac{\gamma}{2}-2a_{1}$ modulo $\zz[1, \rho]$ for any $z\in E_{\tau}$, which is impossible unless $\alpha_{1}=a_{1}=0$. Reiterating this argument for the three elliptic curves we get that $\alpha_{i}=a_{i}=0$ for any $i=1, 2, 3$. This is impossible.
\end{proof}

\begin{proposition}\label{noz2timesz2}
No $\zz / 2 \zz\times\zz / 2 \zz$ bielliptic surface can determine a smooth toroidal compactification of Euler number one.
\end{proposition}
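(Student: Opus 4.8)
The plan is to follow the template established in Propositions \ref{nozmod4} and its $\zz/6\zz$ analogue: combine the divisibility obstructions of Lemmas \ref{first2cusp}--\ref{first1cusp} with the numerical obstructions of Lemmas \ref{Breduction} and \ref{Areduction}. For the group $G = \zz/2\zz \times \zz/2\zz$ we have $s = t = 2$ and $|G| = st = 4$, so the first task is to determine which configuration from the list in \S\ref{zero} can survive.

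First I would eliminate the two-cusped configurations. The case $C_1^2 = C_2^2 = 2$ is ruled out unconditionally by Lemma \ref{first2cusp}. The case $C_1^2 = 0$, $C_2^2 = 6$ involves a curve with a regular singular point of order $3$, and by Lemma \ref{second2cusp} this requires $3 \mid |G|$; since $|G| = 4$, this is impossible. By Lemma \ref{oneortwocusps} these are the only two-cusped possibilities, so I am left with the one-cusped configuration $C_1^2 = 12$, $D_1^2 = -4$, in which $C_1$ carries a regular singular point of order $r = 4$. This case is genuinely not killed by the divisibility criterion, since Lemma \ref{first1cusp} only requires $4 \mid |G|$ and indeed $4 \mid 4$; hence the finer numerical argument is needed.

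Next I would analyze the surviving case numerically. Writing $C_1 = \frac{k_1}{2} A + \frac{k_2}{2} B$ in the basis for $\mathrm{Num}(Y)$ and using $A \cdot B = |G| = 4$ together with $A^2 = B^2 = 0$, I obtain $C_1^2 = 2 k_1 k_2 = 12$, so $k_1 k_2 = 6$; positivity of $C_1 \cdot A$ and $C_1 \cdot B$ forces $k_1, k_2 > 0$, leaving the candidates $(k_1, k_2) \in \{(1,6),(2,3),(3,2),(6,1)\}$. Then Lemma \ref{Breduction} forces $k_1 > \frac{rs}{|G|} = 2$ and Lemma \ref{Areduction} forces $k_2 > \frac{rt}{|G|} = 2$, that is $k_1 \ge 3$ and $k_2 \ge 3$. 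These two strict inequalities are incompatible with $k_1 k_2 = 6$, so no admissible class exists, completing the proof.

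I expect the main point to be conceptual rather than computational: the symmetry $s = t = 2$ makes Lemmas \ref{Breduction} and \ref{Areduction} apply with the \emph{same} threshold $2$, and it is precisely this that yields an immediate product contradiction, in contrast to the $\zz/6\zz$ case where one numerical class survived the inequalities and demanded an explicit analysis of the group action. The one step requiring care is the \emph{strictness} of the inequalities in Lemmas \ref{Breduction} and \ref{Areduction}, which is exactly what eliminates the borderline pairs $(2,3)$ and $(3,2)$; without it the argument would not close.
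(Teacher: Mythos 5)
Your proof is correct and follows essentially the same route as the paper: reduce to the one-cusp case $C_1^2=12$, $r=4$ via Lemmas \ref{first2cusp}--\ref{first1cusp}, enumerate the classes with $2k_1k_2=12$, and eliminate them with Lemmas \ref{Breduction} and \ref{Areduction}. The only (cosmetic) difference is that you package the four cases into the single product contradiction $k_1,k_2\ge 3$ versus $k_1k_2=6$, whereas the paper dispatches them one by one (handling $(6,1)$ by directly computing $C_1\cdot A=2<4$, which is just the inequality underlying Lemma \ref{Areduction}); your observation about the necessity of strictness for the pairs $(2,3)$ and $(3,2)$ is accurate and is exactly what the equality analysis in Lemma \ref{Breduction} provides.
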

\begin{proof}
By Lemmas \ref{first2cusp} - \ref{first1cusp}, it suffices to consider the one cusp case, where
\[
C_1 = \frac{k_1}{2} A + \frac{k_2}{2} B,
\]
$C_1^2 = 12$, and $C_1$ has a regular singular point of {order} four. We have four possibilities:
\begin{align*}
& k_{1}=1, \quad k_{2}=6, \quad C_{1}=\frac{A}{2}+3B;\\ 
 & k_{1}=2, \quad k_{2}=3, \quad C_{1}=A+\frac{3B}{2};\\
 & k_{1}=3, \quad k_{2}=2, \quad C_{1}=\frac{3A}{2}+B;\\
 & k_{1}=6, \quad k_{2}=1, \quad C_{1}=3A+\frac{B}{2}.
\end{align*}
{The first three cases are eliminated by Lemma \ref{Breduction}.} For the fourth case we argue as follows. Note that in this case $C_{1}\cdot A=2$, which is a contradiction because we assumed that $C_{1}$ has a singular point of order four.
\end{proof}

\begin{proposition}\label{noz4z2}
No $\zz / 4 \zz\times\zz / 2 \zz$ bielliptic surface can determine a smooth toroidal compactification of Euler number one.
\end{proposition}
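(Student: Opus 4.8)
The plan is to follow the template of the preceding $\zz/6\zz$ proposition, but with the extra complication that the relevant lifted curves turn out to be honest multisections rather than sections. By Lemmas \ref{first1cusp}--\ref{Areduction} the only configuration that can survive is the one-cusped case, in which $C_1$ carries a single ordinary singular point of order $r=4$ and $C_1^2=12$. Writing $C_1=\frac{k_1}{4}A+\frac{k_2}{2}B$ with $k_1 k_2=6$, Lemma \ref{Breduction} gives $k_1>\frac{4\cdot 4}{8}=2$ and Lemma \ref{Areduction} gives $k_2>\frac{4\cdot 2}{8}=1$, so the unique remaining class is $C_1=\frac34 A+B$, for which $C_1\cdot A=8$ and $C_1\cdot B=6$. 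Thus the fiberwise obstructions no longer apply and a genuinely new argument is required.

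First I would pass to the \'etale cover $\pi\colon E_\lambda\times E_\tau\to Y$ of degree $d=|G|=8$ from Theorem \ref{bagnera}, with $\varphi$ of order $4$ acting by $(w,z)\mapsto(iw,z+\mu)$ and $\psi$ of order $2$ acting by $(w,z)\mapsto(w+\tfrac{1+i}{2},z+\nu)$. By Lemma \ref{fatto}, $\pi^{-1}(C_1)$ is a connected reduced union of exactly $r=4$ smooth elliptic curves $E_1,\dots,E_4$, each a translate of an elliptic subgroup $H_i$, on which $G$ acts transitively with point-stabilizers of order $d/r=2$. Intersecting $\pi^*C_1=\sum_i E_i$ with the pullbacks of the two fiber classes pins down the multidegrees: since $G$ permutes the $E_i$ and preserves the product structure, all components share the same degree over each factor, and comparing $\pi^*C_1\cdot\pi^*A=8\,(C_1\cdot A)$ with the eight vertical fibers of $\pi^*A$, and $\pi^*C_1\cdot\pi^*B=8\,(C_1\cdot B)$ with the horizontal fibers of $\pi^*B$, forces $\deg(\mathrm{pr}_1\!\mid_{E_i})=2$ and $\deg(\mathrm{pr}_2\!\mid_{E_i})=3$. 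Integrality of this last degree forces the map $G\to\mathrm{Trans}(E_\tau)$ to have nontrivial kernel (otherwise $\pi^*B$ would consist of eight horizontal fibers and force the non-integral value $3/2$), while freeness of the $G$-action shows the only order-two element that can lie in this kernel is $\psi$; hence $\nu=0$ and $\psi=(w+\tfrac{1+i}{2},\,z)$.

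The heart of the argument is then to identify the order-two stabilizer of each component. For a fixed $i$, any generator $g$ of the stabilizer of $E_i$ is an affine involution whose linear part $L$ must preserve the tangent line $T_0 H_i$. The order-two elements of $G$ are $\varphi^2$, $\psi$, and $\varphi^2\psi$, with linear parts $\mathrm{diag}(-1,1)$, $\mathrm{id}$, and $\mathrm{diag}(-1,1)$ respectively. Since $H_i$ has multidegree $(2,3)$ it is neither coordinate subgroup, so $T_0 H_i$ is not an eigenline of $\mathrm{diag}(-1,1)$; this excludes $\varphi^2$ and $\varphi^2\psi$ and forces the stabilizer to be $\langle\psi\rangle$. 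As $\psi$ is now translation by the $2$-torsion point $\beta=(\tfrac{1+i}{2},0)$, invariance of $E_i=e_i+H_i$ under $\psi$ is equivalent to $\beta\in H_i$.

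Finally I would extract the contradiction from the torsion orders. The point $\beta=(\tfrac{1+i}{2},0)$ has order two and satisfies $\mathrm{pr}_2(\beta)=0$, so $\beta$ lies in $\ker(\mathrm{pr}_2\!\mid_{H_i})=H_i\cap(E_\lambda\times\{0\})$, which is the kernel of a degree-three isogeny and hence cyclic of order three. An element of order two cannot lie in a group of order three, so no such $E_i$---and therefore no such $C_1$---exists. The main obstacle, and exactly the point where this case departs from the $\zz/6\zz$ argument, is the multisection behavior: one cannot parametrize $E_i$ by a linear equation $w=\alpha z+a$, so the contradiction must be drawn from the subgroup structure (the multidegree together with the forced translation $\psi\in G_{E_i}$) rather than from an explicit functional identity.
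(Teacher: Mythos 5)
Your proposal is correct, and its decisive step is the same as the paper's. The paper disposes of the surviving class $C_1 = \tfrac{3}{4}A + B$ by noting that $C_1 \cdot B = 6$, that $\pi^{-1}(C_1)$ has exactly four irreducible components permuted transitively by $G$ (Lemma \ref{fatto}), and that each component therefore meets a horizontal fiber in the same number $s$ of points, forcing $4s = 6$. Your ``non-integral value $3/2$'' is precisely this contradiction: by Theorem \ref{bagnera}, $G$ is by definition a group of translations of $E_\tau$, so the map $G \to \mathrm{Trans}(E_\tau)$ is injective, $\pi^{-1}(B)$ really does consist of eight fibers each mapping isomorphically to $B$, and the proof ends there. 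Where you diverge is that you do not invoke this faithfulness; instead you treat the escape route in which $\psi$ acts trivially on $E_\tau$, and rule it out by showing the $2$-torsion point $\bigl(\tfrac{1+i}{2}, 0\bigr)$ would have to lie in the order-three kernel of $\mathrm{pr}_2|_{H_i}$. That second half is internally sound (the identification of the component stabilizer via eigenlines of $\mathrm{diag}(-1,1)$ and the torsion count are both correct), but it is analyzing a configuration that cannot arise: with $\nu = 0$ the quotient is a $\zz/4\zz$ bielliptic surface in disguise, not a type $\zz/4\zz \times \zz/2\zz$ surface, so $\mathrm{Num}(Y)$ would not even have the basis $\tfrac{1}{4}A, \tfrac{1}{2}B$ you started from. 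In short: your argument buys robustness against a degenerate case that the Bagnera--de Franchis classification already excludes, at the cost of roughly doubling the length; recognizing that the $3/2$ non-integrality is itself the contradiction recovers the paper's proof.
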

\begin{proof}
By Lemmas \ref{first2cusp} - \ref{first1cusp}, it suffices to consider the one cusp case, where $C_1^2 = 12$ and it has a regular singular point of {order} four.
If
\[
C_1 = \frac{k_{1}}{4}A+\frac{k_{2}}{2}B
\]
with $C_1^2 = 12$, we have four possibilities:
\begin{align*}
& k_{1}=1, \quad k_{2}=6, \quad C_{1}=\frac{A}{4}+3B;\\ 
 & k_{1}=2, \quad k_{2}=3, \quad C_{1}=\frac{A}{2}+\frac{3B}{2};\\
 & k_{1}=3, \quad k_{2}=2, \quad C_{1}=\frac{3A}{4}+B;\\
 & k_{1}=6, \quad k_{2}=1, \quad C_{1}=\frac{3A}{2}+\frac{B}{2}.
\end{align*}
The first two cases are impossible by Lemma \ref{Breduction}, {as is the fourth}.

For the third case, let us observe that $C_{1}\cdot B=6$ so that the curve $C_{1}$ is a $6$-section of the Albanese map $\pi_{2}: Y\rightarrow E_{\tau}/(\zz/4\zz\times\zz/2\zz)$. Let $\pi: E_{i}\times E_{\tau}\rightarrow Y$ be the degree eight \'etale cover and observe that $\pi^{-1}(B)=8E_{i}$ (here $i$ is a square root of $-1$). Next, let $H_{1}$, $H_{2}$, $H_{3}$, $H_{4}$ denote the irreducible components of $\pi^{-1}(C_1)$. Since the automorphism group $\zz/4\zz\times \zz/2\zz$ acts transitively on the $H_{j}$ and trivially on the numerical class of $E_{i}$, we obtain that for any $j=1, 2, 3, 4$ the $H_{j}$ is a $s$-section for a fixed integer $s$. This implies the contradiction $4s=6$.
\end{proof}

This leaves us with only $\zz / 3 \zz$ and $\zz / 3 \zz \times \zz / 3 \zz$. In fact, both will produce examples of smooth toroidal compactifications. We now proceed to analyze these cases and completely classify the examples they determine.

\subsection{The classification in the case of $\zz / 3 \zz$ quotients}\label{zeta3}

Let $Y$ be a $\zz / 3 \zz$ bielliptic quotient. By Theorem \ref{bagnera}, we can find two elliptic curves $E_{\rho}$ and $E_{\tau}$, associated with the lattices $\zz[1, \rho]$ and $\zz[1, \tau]$, respectively, such that
\[
Y=(E_{\rho}\times E_{\tau})/(\zz / 3 \zz).
\]
More precisely, $\rho=e^{\frac{2\pi i}{3}}$ while $\tau$ is arbitrary, and the $\zz / 3 \zz$ group of automorphisms of $E_{\rho}\times E_{\tau}$ is generated by the automorphism $\varphi(w, z)=(\rho w, z+\frac{\gamma}{3})$ for some $\gamma\in \zz[1, \tau]$.

Consider the group $\mathrm{Num}(Y)$ of divisors on $Y$ up to numerical equivalence. Given the bielliptic quotient $\pi:E_{\rho}\times E_{\tau}\rightarrow Y$, we have two elliptic fibrations
\begin{align*}
\pi_{1}: Y&\rightarrow E_{\rho}/(\zz / 3 \zz) = \pp^{1}\\
\pi_{2}: Y&\rightarrow E_{\tau}/(\zz / 3 \zz)
\end{align*}
with generic fibers $A$ and $B$. Recall from above that, up to numerical equivalence, we can write any curve $C \in \mathrm{Num}(Y)$ as
\[
C=\frac{k_{1}}{3}A+k_{2}B,
\]
for $k_{1}, k_{2} \in \zz$. Notice that all of the fibers of $\pi_{2}$ are {smooth and reduced} and that this map is none other than the Albanese map. The class $\frac{1}{3}A$ represents a multiple fiber of the map $\pi_{1}$ counted with multiplicity one. Moreover, we have $A\cdot B=3$.

By Lemmas \ref{first2cusp} - \ref{first1cusp}, the only possibility is a two-cusped manifold with $D_1^2 = -1$ and $D_2^2 = -3$, the latter of which determines a curve $C_2$ on $Y$ with $C_2^{2}=6$. Up to numerical equivalence, we have two possibilities:
\begin{align*}
& k_{1}=1, \quad k_{2}=3, \quad C_2=\frac{1}{3}A+3B;\\ 
 & k_{1}=3, \quad k_{2}=1, \quad C_2=A+B.
\end{align*}
The first case is ruled out by Lemma \ref{Breduction}.\\

It remains to discuss the case when the curve $C_2$ in $Y$ is numerically equivalent to $A+B$. Notice that in this case $C_2\cdot A=C_2\cdot B=3$. Let $E_{1}$, $E_{2}$, and $E_{3}$ denote the three smooth elliptic curves on $E_\rho \times E_\tau$ that are the irreducible components of $\pi^{-1}(C_2)$. Since $C_2\cdot A=3$, we have a one-to-one map from $E_i$ to $E_{\rho}$ for each $i=1, 2, 3$. Next, since $C_2\cdot B=3$ we also have a one-to-one map from each $E_i$ to $E_{\tau}$. Indeed, it follows that each $E_i$ has intersection number one with the general fiber of each factor projection. We therefore conclude that $E_\rho \cong E_\tau$ and $Y$ is a quotient of $E_{\rho}\times E_{\rho}$ by the group of automorphisms generated by the order three automorphism
\[
\varphi(w, z)=\left(\rho w, z+\frac{\gamma}{3}\right)
\]
for some $\gamma\in\zz[1, \rho]$.

The next step is to determine the admissible values for $\gamma$. First, observe that, up to a translation in the $w$-direction, we can assume that
\[
E_{1}=(\alpha_{1}z, z), \quad E_{2}=(\alpha_{2}z+a_{2}, z), \quad E_{3}=(\alpha_{3}z+a_{3}, z)
\]
where the $\alpha_{i}$'s and $a_{i}$'s are complex numbers to be determined. In particular, the $E_i$ cannot be three $\varphi$-translates of a curve with second coordinate zero, since they would project to a smooth curve on the quotient. Then, up to renumbering we have 
\[
\varphi (E_{1})=E_{2}, \quad \varphi (E_{2})=E_{3}, \quad \varphi (E_{3})=E_{1}.
\]
Analytically, this means that
\begin{align*}
\rho\alpha_{1}z &= \alpha_{2}\left(z+\frac{1}{3} \gamma \right)+a_{2}, \\
\rho\alpha_{2}z+\rho a_{2} &= \alpha_{3}\left(z+\frac{1}{3} \gamma \right)+a_{3}, \\
\rho\alpha_{3}z+\rho a_{3} &= \alpha_{1}\left(z+\frac{1}{3} \gamma \right),
\end{align*}
(recall that our coordinates are on the Abelian surface, so in $\cc^2$ our equations must be taken modulo $\zz[1, \rho]$) and we then obtain:
\begin{align*}
\rho\alpha_{1}=\alpha_{2}, \quad \rho\alpha_{2}&=\alpha_{3}, \quad \rho\alpha_{3}=\alpha_{1} \\
\frac{\alpha_2}{3} \gamma+a_{2}=0, \quad \rho a_{2}&=\frac{\alpha_3}{3} \gamma+a_{3}, \quad \rho a_{3}=\frac{\alpha_1}{3} \gamma.
\end{align*}
It follows immediately that $\rho^3 = 1$, i.e., that $\rho$ is a cube root of unity.

Since each $E_i$ is a $1$-section of the map $E_{\rho}\times E_{\rho}\rightarrow E_{\rho}$ that intersects $\{w=0\}$ in exactly one point, we have that $\{\alpha_{1}, \alpha_{2}, \alpha_{3}\}\in\{1, \zeta, ..., \zeta^{5}\}$, where $\zeta=e^{\frac{\pi i}{3}}$ (cf. Fact \ref{fatto4}). We also want these three sections to intersect in three distinct points, so $\{\alpha_{1}, \alpha_{2}, \alpha_{3}\}$ is of the form $\{u, u \rho, u \rho^2\}$ for some $6^{th}$ root of unity $u$ (not necessarily primitive).

Since these choices of slopes all differ by an automorphism of the Abelian surface, namely complex multiplication by $u$ on the first factor, it suffices to consider the first choice, $\alpha_{1}=1$, $\alpha_{2}=\rho$, $\alpha_{3}=\rho^{2}$. Note that complex multiplication also changes the $a_i$, but that is also forced by the above relationship between the $\alpha_i$ and $a_i$. Next, we want the automorphism group generated by $\varphi$ not only to act transitively on the curves $E_i$, but also on their intersection points
\[
E_{1}\cap E_{2}\cap E_{3}=\{ (z_{1}, z_{1}), (z_{2}, z_{2}), (z_{3}, z_{3})\}.
\]
This is necessary because we want the curve $C_2$ in $Y$ to have a unique singular point. Therefore, we have
\[
z_{2}-z_{1}=\frac{\gamma}{3}, \quad z_{3}-z_{2}=\frac{\gamma}{3}, \quad z_{3}-z_{1}=2\frac{\gamma}{3}.
\]
On the other hand, we have the identities
\[
z_{1}=\rho z_{1}+a_{2},\quad z_{2}=\rho z_{2}+a_{2},\quad z_{3}=\rho z_{3}+a_{2}
\]
which imply that 
\[
\rho (z_{2}-z_{1})=z_{2}-z_{1},\quad \rho (z_{3}-z_{1})=z_{3}-z_{1},\quad \rho (z_{3}-z_{2})=z_{3}-z_{2}.
\]
It follows that $\frac{\gamma}{3}=\frac{(1-\rho)}{3}$ or $\frac{\gamma}{3}=2\frac{(1-\rho)}{3}$, since $z_2 - z_1$ is a nonzero point on the elliptic curve $E_\rho$ stable under complex multiplication by $\rho$. We will later see that each choice gives an isomorphic quotient, so we assume for now that $\frac{\gamma}{3}=\frac{(1-\rho)}{3}$.

Using this information together with the previously derived formulas, we obtain 
\[
a_{2}=-\frac{(1-\rho)}{3}, \quad a_{3}=-2\frac{(1-\rho)}{3}.
\]
so that
\[
E_{1}=(z, z), \quad E_{2}= \left(\rho z-\frac{(1-\rho)}{3}, z \right), \quad E_{3}=\left(\rho^{2}z-2\frac{(1-\rho)}{3}, z\right).
\]
Next, we compute that 
\[
E_{1}\cap E_{2}\cap E_{3}=\Big\{\left(\frac{2}{3}, \frac{2}{3}\right), \quad \left(\frac{2\rho}{3}, \frac{2\rho}{3}\right), \quad \left(\frac{2\rho^{2}}{3}, \frac{2\rho^{2}}{3}\right)= \left(\frac{1+\rho}{3}, \frac{1+\rho}{3}\right)\Big\}.
\]
In conclusion, the curves $E_{1}$, $E_{2}$, $E_{3}$ are uniquely determined. Thus, the bielliptic surface $Y$ and the curve $C_2$ are also uniquely determined. We now show that $Y$ is independent of our choice of the $3$-torsion point.

\begin{lemma}\label{independence}
The bielliptic surface $Y$ and the curve $C_2$ described above are independent of the choice of $\frac{1}{3}\gamma$.
\end{lemma}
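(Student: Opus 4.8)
The plan is to realize the two admissible translation parts by a single automorphism of the covering abelian surface $E_{\rho}\times E_{\rho}$ that conjugates one $\zz / 3 \zz$-action into the other. Write $\varphi(w,z)=(\rho w,\,z+\tfrac{1-\rho}{3})$ for the action attached to the first choice of $\tfrac{1}{3}\gamma$, and $\varphi'(w,z)=(\rho w,\,z+\tfrac{2(1-\rho)}{3})$ for the one attached to the second; let $Y_1$ and $Y_2$ denote the two quotients and $C_2^{(1)}, C_2^{(2)}$ the associated curves. The candidate conjugating map is the involution $\psi(w,z)=(w,-z)$, which is an automorphism of $E_{\rho}\times E_{\rho}$ preserving the product structure.

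First I would check the conjugation identity directly. Since $\psi=\psi^{-1}$, one computes $(\psi\circ\varphi\circ\psi^{-1})(w,z)=(\rho w,\,z-\tfrac{1-\rho}{3})$, and because $1-\rho\in\zz[1,\rho]$ we have $-\tfrac{1-\rho}{3}\equiv\tfrac{2(1-\rho)}{3}\pmod{\zz[1,\rho]}$, so $\psi\varphi\psi^{-1}=\varphi'$. Hence $\psi$ carries $\langle\varphi\rangle$-orbits to $\langle\varphi'\rangle$-orbits and descends to a biregular isomorphism $\bar\psi\colon Y_1\to Y_2$. Because $\psi$ is a product of automorphisms of the two factors, it preserves the two elliptic fibrations, so $\bar\psi$ fixes the numerical classes $A$ and $B$ and sends an irreducible curve in the class $A+B$ with a single triple point to a curve of the same type.

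It then remains to identify $\bar\psi(C_2^{(1)})$ with $C_2^{(2)}$. For this I would track the three $1$-sections: $\psi$ sends $E_1, E_2, E_3$ of slopes $1,\rho,\rho^2$ to sections of slopes $-1,-\rho,-\rho^2$, i.e.\ the configuration associated with the sixth root of unity $u=-1$. Composing with complex multiplication $\mu(w,z)=(-w,z)$ on the first factor---which commutes with $\varphi'$ and hence descends to an automorphism of $Y_2$---restores the slopes $1,\rho,\rho^2$, and the intercepts are then forced exactly as in the derivation preceding this lemma. Thus $\bar\mu\circ\bar\psi$ carries the pair $(Y_1,C_2^{(1)})$ isomorphically onto $(Y_2,C_2^{(2)})$, proving independence of the choice of $\tfrac{1}{3}\gamma$. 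The only delicate point is the bookkeeping of the residual slope ambiguity, so that the \emph{curve}, and not merely the surface, is matched; invoking the uniqueness of the configuration established above makes this automatic once the numerical class and the singularity type are seen to be preserved by the isomorphism.
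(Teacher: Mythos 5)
Your proposal is correct and is essentially the paper's own argument: the paper uses the single involution $\phi(w,z)=(-w,-z)$ of $E_\rho\times E_\rho$, which is exactly your composite $\mu\circ\psi$, and observes that it conjugates one $\zz/3\zz$-action into the other and carries the first configuration of sections to the second. Your version just factors that map into two steps and spells out the conjugation identity and the slope/intercept bookkeeping more explicitly.
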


\begin{proof}
The two possibilities for the $\alpha_i$ and $\frac{1}{3} \gamma$ are:
\begin{itemize}

\item $(z, z)$, \quad\quad $\left(\rho z-\frac{(1-\rho)}{3}, z\right)$, \quad $\left(\rho^{2}z-2\frac{(1-\rho)}{3}, z\right)$

\item $(z, z)$, \quad\quad $\left(\rho z-2\frac{(1-\rho)}{3}, z\right)$, \quad $\left(\rho^{2}z-\frac{(1-\rho)}{3}, z\right)$

\end{itemize}
Let $X$ and $X'$ be the bielliptic surfaces obtained by taking the quotient of $E_{\rho}\times E_{\rho}$ by the automorphism groups generated by
\begin{align*}
\varphi(w, z)&=\left(\rho w, z+\frac{(1-\rho)}{3}\right) \\
\varphi'(w, z)&=\left(\rho w, z+2\frac{(1-\rho)}{3}\right),
\end{align*}
respectively. The first configuration must be considered in $X$, while the latter must be considered in $X'$. The self-isomorphism of $E_{\rho}\times E_{\rho}$ given by $\phi:(w, z)\rightarrow (-w, -z)$ takes the first configuration to the second and descends to an isomorphism between $X$ and $X'$. This proves that the two choices determine the same isomorphism class of bielliptic quotient $X$ with the same singular curve $C_2$.
\end{proof}

Next, we must find the possible elliptic curves $C_1$ in $Y$ intersecting $C_2$ in its unique singular point such that $C_1\cdot C_2=3$ (see the list given in \eqref{list}). The first and most obvious choice is to let $C_1$ be the unique fiber $B$ of the Albanese map passing through the singular point of $C_{2}$. Notice that since $B\cdot C_{2}=3$, the intersection is transverse. Thus, let $C_1 = B$ and consider the pair $(Y, C)$, where $C = B + C_2$, and let $X$ denote the blowup of $Y$ at the singular point $p$ of $C_2$. Let $D_i$ be the proper transform of $C_i$ and $D = D_1 + D_2$. We then claim that the pair $(X, D)$ is a smooth toroidal compactification. Indeed, it saturates the logarithmic Bogomolov--Miyaoka--Yau inequality:
\[
\overline{c}^{2}_{1}=(K_{X}+D)^{2}=K^{2}_{X}-D_1^{2}-D_2^{2}=-1+3+1=3\overline{c}_{2}.
\]

Next, we can take $C_1'$ as the unique fiber of $\pi_{2}$ passing through the singular point of $C_2$. Notice that this is not a multiple fiber and, since $C_2\cdot A=3$, the intersection is transverse. Consider the pair $(Y, C')$, where $C' = C_1' + C_2$, and blow up the singular point $p$ of $C_2$. Let $X$ denote the blowup of $Y$, $D_1', D_2$ denote the proper transforms of $C_1'$ and $C_2$, respectively, and set $D' = D_1' + D_2$. Again, it is easy to check that it saturates the logarithmic Bogomolov--Miyaoka--Yau inequality, so the pair $(X, D')$ is a smooth toroidal compactification.

Finally, we argue that these are the unique smooth toroidal compactifications coming from a $\zz / 3 \zz$ bielliptic surface. If $\frac{k_1}{3} A + k_2 B$ is another possible choice, it is a smooth elliptic curve of self-intersection zero, so $2 k_1 k_2 = 0$. In other words, it must be a multiple of $\frac{1}{3} A$ or $B$. This curve also must have intersection number $3$ with $C_2$, which leaves us only with the above two choices, $C_1$ and $C_1'$.

\subsection{Discussion of the Second and Third Examples}\label{secondthird}

Let $(X, D)$ and $(X, D')$ be the toroidal compactification found in \S \ref{zeta3}. We want to show that those compactifications are associated with two distinct complex hyperbolic surfaces. Assume this is not the case. There exists an automorphism $\Psi: X\rightarrow X$ sending $D_1' + D_2$ to $D_1 + D_2$, {since the map of complex hyperbolic surfaces takes cusps to cusps}. The first claim is that we necessarily must have $\Psi(D_2)=D_2$ and $\Psi(D_1')=D_1$. Observe that $X$ is the blowup at a single point of a bielliptic surface. Thus, inside $X$ there is a unique rational curve that is the exceptional divisor say $E$. This implies that $\Psi(E)=E$. Now $E\cdot D_2=3$ while $E\cdot D_1=E\cdot D_1'=1$. The claim then follows.

Let $Y$ denote the bielliptic surface obtained by contracting the exceptional divisor $E$. Also, let $C_1$, $C_1'$, and $C_2$ denote the blow down transform of the curves $D_1$, $D_1'$, and $C_2$. Next, let $\psi: Y\rightarrow Y$ denote the automorphism on $Y$ induced by $\Psi$ on $X$, {which exists since $\Psi$ must preserve the exceptional curve of the blowup}. Observe that $\psi(C_2)=C_2$ and $\psi(C_1')=C_1$.

\begin{fact}\label{fact}
There exists no such automorphism of $X$.
\end{fact}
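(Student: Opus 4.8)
The plan is to push the argument already begun above to a contradiction. Recall that an isomorphism between $(X,D')$ and $(X,D)$ was shown to descend to an automorphism $\psi\colon Y\to Y$ of the underlying bielliptic surface satisfying $\psi(C_2)=C_2$ and $\psi(C_1')=C_1$. The key point I would exploit is that $C_1$ and $C_1'$ belong to two intrinsically different fibrations of $Y$: by construction $C_1=B$ is a fiber of the Albanese map $\pi_2$, whose base is an elliptic curve, while $C_1'$ is a reduced fiber of $\pi_1$, whose base is $\pp^1$. I would make this obstruction precise numerically, working in the $\zz$-basis $e_1=\frac{1}{3}A$, $e_2=B$ of $\mathrm{Num}(Y)$, for which $e_1^2=e_2^2=0$ and $e_1\cdot e_2=\frac{1}{3}(A\cdot B)=1$.

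First I would observe that, by functoriality of the Albanese map, $\psi$ commutes with $\pi_2$ and therefore permutes its fibers; since all fibers of $\pi_2$ are numerically equivalent to $B$, this forces $\psi_*e_2=e_2$ in $\mathrm{Num}(Y)$. Now $\psi_*$ is an isometry of the intersection form, which in the basis $(e_1,e_2)$ is the hyperbolic form $\left(\begin{smallmatrix}0&1\\1&0\end{smallmatrix}\right)$. A direct computation shows that the integral isometries of this form are exactly $\pm\mathrm{id}$ together with $\pm$(the coordinate interchange), and that the only one of these fixing $e_2$ is the identity. Hence $\psi_*=\mathrm{id}$ on $\mathrm{Num}(Y)$.

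Finally, $\psi(C_1')=C_1$ yields $[C_1']=\psi_*[C_1']=[C_1]$, that is $[A]=[B]$ in $\mathrm{Num}(Y)$. This is absurd, since $A$ and $B$ are linearly independent there (indeed $A\cdot B=3$ while $A^2=B^2=0$). This contradiction shows that no automorphism $\Psi$ of $X$ can carry $D_1'+D_2$ to $D_1+D_2$, proving the Fact. The main obstacle is the single structural input that $\psi$ must fix the Albanese fiber class; everything else is the elementary linear algebra above. This input is exactly the functoriality of the Albanese variety, and it is what rules out the a priori possibility that an automorphism swaps the two fibrations—a possibility that is geometrically blocked anyway because $\pi_1$ and $\pi_2$ have nonisomorphic bases $\pp^1$ and an elliptic curve.
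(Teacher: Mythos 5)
Your proof is correct, but it reaches the contradiction by a genuinely different mechanism than the paper. The paper's argument is a one-line exploitation of the asymmetry of the lattice $\mathrm{Num}(Y)$: since $C_1'$ is numerically equivalent to $A$ and $C_1$ to $B$, the induced automorphism $\psi_*$ of $\mathrm{Num}(Y)$ would have to send the integral class $\tfrac{1}{3}A$ (the multiple fiber of $\pi_1$) to $\tfrac{1}{3}B$, which is not in $\mathrm{Num}(Y)$ because the lattice has $\zz$-basis $\tfrac{1}{3}A$, $B$ --- an immediate contradiction, with no need to determine $\psi_*$ completely. You instead use functoriality of the Albanese map to pin down $\psi_*[B]=[B]$, classify the integral isometries of the hyperbolic plane $\bigl(\begin{smallmatrix}0&1\\1&0\end{smallmatrix}\bigr)$ to conclude $\psi_*=\mathrm{id}$, and then contradict the linear independence of $[A]$ and $[B]$. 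Both are sound. Your route requires the extra geometric input of Albanese functoriality but is insensitive to which fibration carries the multiple fiber (it would still work if the lattice were symmetric in the two fiber classes), whereas the paper's route uses only the non-integrality of $\tfrac{1}{3}B$ and avoids any classification of isometries. Note also that once you have $\psi_*[B]=[B]$ together with $\psi_*[A]=[B]$, you could short-circuit the isometry step: $\psi_*$ would kill the nonzero class $[A]-[B]$, contradicting injectivity; or you could simply fall back on the paper's observation that $\psi_*(\tfrac{1}{3}A)=\tfrac{1}{3}B\notin\mathrm{Num}(Y)$.
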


\begin{proof}
Recall that $C_1$ is numerically equivalent to $B$ and $C_1'$ is numerically equivalent to $A$. Let $\psi_{*}: \mathrm{Num}(Y)\rightarrow \mathrm{Num}(Y)$ be the induced automorphism. Since $\psi(A)=B$, we have
\[
\psi_{*}\big( \frac{1}{3} A \big)= \frac{1}{3} B \notin \mathrm{Num}(Y),
\]
which is a contradiction.
\end{proof}

\subsection{The classification in the case of $\zz / 3 \zz\times \zz / 3 \zz$ quotients}\label{z3z3}

Let $Y$ be a $\zz / 3 \zz\times \zz / 3 \zz$ bielliptic quotient. By Theorem \ref{bagnera}, we can find two elliptic curves $E_{\rho}$ and $E_{\tau}$, respectively associated with the lattices $\zz[1, \rho]$ and $\zz[1, \tau]$, such that
\[
Y= (E_{\rho}\times E_{\tau})/(\zz / 3 \zz\times \zz / 3 \zz).
\]
More precisely, $\rho=e^{\frac{2\pi i}{3}}$ while $\tau$ is arbitrary, and the $\zz / 3 \zz\times \zz / 3 \zz$ group of automorphisms of $E_{\rho}\times E_{\tau}$ is generated by the commuting order three automorphisms 
\[
\varphi_{1}(w, z)=\left(\rho w, z+\frac{\gamma}{3}\right), \quad \varphi_{2}(w, z)=\left(w+ k \frac{(1-\rho)}{3}, z+\frac{\gamma'}{3}\right)
\]
for some $\gamma, \gamma' \in \zz[1, \tau]$ and $k = 1$ or $2$. Consider the group $\mathrm{Num}(Y)$ of divisors on $Y$ up to numerical equivalence. If $A$ and $B$ are the generic fibers of the two elliptic fibrations $\pi_{1}: Y\rightarrow \pp^{1}=E_{\rho}/(\zz / 3 \zz)$ and $\pi_{2}: Y\rightarrow E_{\tau}/(\zz / 3 \zz)$, then $\frac{1}{3}A$ and $\frac{1}{3}B$ form a basis of $\mathrm{Num}(Y)$. Therefore, up to numerical equivalence we can write any curve $C$ as $\frac{k_{1}}{3}A+\frac{k_{2}}{3}B$ for $k_{1}, k_{2} \in \zz$. Notice that $\pi_2$ is the Albanese map, and all its fibers are generic. The class $\frac{1}{3} A$ represents a multiple fiber of the map $\pi_{1}$ with multiplicity one. Finally, we have $A\cdot B=9$.

As in the $\zz / 3 \zz$ case, it suffices to consider the case where $C_1$ is an elliptic curve of self-intersection zero and $C_2^{2}=6$. Up to numerical equivalence, we can write $C_2=\frac{k_{1}}{3}A+\frac{k_{2}}{3}B$ where $\frac{1}{3}A$ and $\frac{1}{3}B$ are the above basis of $\mathrm{Num}(Y)$. There are two possibilities:
\begin{align*}
& k_{1}=1, \quad k_{2}=3, \quad C_2=\frac{1}{3}A+B;\\ 
 & k_{1}=3, \quad k_{2}=1, \quad C_2=A+\frac{1}{3}B.
\end{align*}

\begin{proposition}
The case $C_2=A+\frac{B}{3}$ cannot occur.
\end{proposition}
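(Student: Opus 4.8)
The plan is to rule out the numerical class $C_2 = A + \frac{B}{3}$ by passing to the \'etale cover $\pi\colon E_{\rho}\times E_{\tau}\to Y$, following the covering-space method used in the preceding propositions, and showing that the three elliptic components of $\pi^{-1}(C_2)$ cannot be glued by the $\zz/3\times\zz/3$-action into an irreducible curve with a \emph{single} triple point. First I would record the numerics: since $A^2=B^2=0$ and $A\cdot B=9$, the class $C_2=A+\frac{B}{3}$ has $C_2^2=6$, $C_2\cdot A=3$, and $C_2\cdot B=9$, and the prescribed singular point has order $r=3$ (as $D_2^2=C_2^2-r^2=-3$). By Lemma \ref{fatto}, $\pi^*C_2=E_1+E_2+E_3$ is a sum of three smooth elliptic curves on which $G=\zz/3\times\zz/3$ acts transitively with a common stabilizer of order $3$, the stabilizers all coinciding because $G$ is abelian.

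Next I would pin down the geometry of the $E_i$. Pulling $C_2\cdot A=3$ and $C_2\cdot B=9$ back through the degree-$9$ cover shows that each $E_i$ is a $1$-section of the projection to $E_\rho$ and a $3$-section of the projection to $E_\tau$; hence each is a graph $E_i=\{z=\beta_i w+b_i\}$ with $\beta_i\colon E_\rho\to E_\tau$ an isogeny of degree $3$. A direct computation with $\varphi_1(w,z)=(\rho w,z+\frac{\gamma}{3})$ shows that $\varphi_1$ cyclically permutes the $E_i$ and multiplies the slope of a graph by $\rho^{-1}$, so after relabeling $\beta_{i+1}=\rho^2\beta_i$. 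In particular, no element of $G$ with a nontrivial rotation part can fix a graph, which forces the common stabilizer to be $\langle\varphi_2\rangle$. Imposing $\varphi_2(E_i)=E_i$ for $\varphi_2(w,z)=(w+s,z+t_2)$ with $s=k\frac{1-\rho}{3}$ then gives the key relation $t_2=\beta_i(s)$.

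The heart of the argument, which I expect to be the main obstacle, is translating ``$C_2$ has a unique singular point, of order $3$'' into an algebraic condition on $\beta_1$. Since the $E_i$ are smooth, every singularity of $C_2$ arises from an intersection of two of them, and a point lying on only two of the $E_i$ would produce a node on $C_2$; as $C_2$ carries only its single triple point, every one of the $E_1\cdot E_2=9$ points of $E_1\cap E_2$ must also lie on $E_3$. Writing this out, $E_1\cap E_2\subseteq E_3$ forces $\ker(\beta_1-\beta_2)=\ker(\beta_1-\beta_3)$, and using $\beta_2=\rho^2\beta_1$, $\beta_3=\rho\beta_1$ this is equivalent to $\ker\beta_1$ being stable under multiplication by $\rho$. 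The unique $\rho$-invariant subgroup of order $3$ in $E_\rho$ is $\ker(1-\rho)$, so $\ker\beta_1=\ker(1-\rho)$.

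Finally, I would note that $(1-\rho)s=k\frac{(1-\rho)^2}{3}=-k\rho\equiv 0$ in $E_\rho$, so $s\in\ker(1-\rho)=\ker\beta_1$ and therefore $t_2=\beta_1(s)=0$. But in a $\zz/3\times\zz/3$ bielliptic surface $G$ embeds into the translations of $E_\tau$ by Theorem \ref{bagnera}, so $t_2$ is a nonzero $3$-torsion point. This contradiction shows that no irreducible curve in the class $A+\frac{B}{3}$ with a single ordinary triple point exists, and hence this case cannot occur.
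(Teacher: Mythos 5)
Your proof is correct, but it takes a genuinely different and more unified route than the paper's. The paper splits into two sub-cases according to the isomorphism class of the degree-three quotient $E_\tau$ of $E_\rho$ (namely $E_{\rho/(1-\rho)} \cong E_\rho$ versus $E_{\rho/3}$) and disposes of each by enumerating the admissible slopes $\alpha_i$ (roots of unity) and translation parts, deriving explicit congruence contradictions --- in one sub-case concluding $\gamma' \in \zz[1,\rho]$ and in the other, after writing out the nine intersection points, concluding $\gamma = 0$. You instead extract a single structural condition from the requirement that $C_2$ have a unique triple point: since $E_1 \cap E_2$ and $E_1 \cap E_3$ are cosets of $\ker(\beta_1 - \beta_2) = \rho \cdot \ker(\beta_1-\beta_3)$ and $\ker(\beta_1 - \beta_3) = (1-\rho)^{-1}(\ker \beta_1)$, their coincidence is equivalent to $\rho$-stability of $\ker\beta_1$, forcing $\ker\beta_1 = \ker(1-\rho) \ni s$, whence $t_2 = \beta_1(s) = 0$; this simultaneously kills the $E_{\rho/3}$ case (no degree-three isogeny from $E_\rho$ with $\rho$-stable kernel lands there) and the $E_\rho$ case (where the kernel condition is automatic but $t_2 = 0$ contradicts the Bagnera--de Franchis requirement that $G$ embed in $E_\tau$ as translations). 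I checked the two pivotal identities --- $(1-\rho^2) = -\rho^2(1-\rho)$ giving $\ker(\beta_1-\beta_2) = \rho\cdot\ker(\beta_1-\beta_3)$, and $(1-\rho)^2 = -3\rho$ giving $s \in \ker(1-\rho)$ --- and both hold. Two points merit a sentence of justification in a written version: that a nonempty coset determines its underlying subgroup (so equality of the intersection loci really does give equality of kernels), and that the contradiction at $t_2 = 0$ is with the Bagnera--de Franchis normal form rather than with freeness of the action (translation by $(s,0)$ with $s \neq 0$ is still free, but the quotient would then not be a $\zz/3\zz \times \zz/3\zz$ bielliptic surface); you do cite Theorem \ref{bagnera} for the latter, which is the right source. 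What the paper's approach buys is complete explicitness with only elementary congruence checks; what yours buys is brevity, the elimination of the case split, and an identification of the actual obstruction, namely that the translation part of $\varphi_2$ on the first factor is forced into the kernel of the isogeny cutting out the components.
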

\begin{proof}
In this case $C_{2}\cdot A=3$ and $C_{2}\cdot B=9$. Then $\pi^*(C_2)$ is numerically equivalent to $3 E_\rho + 9 E_\tau$, and Lemma \ref{fatto} implies that it has three irreducible components, each of which is a smooth elliptic curve. Let $E_{1}$, $E_{2}$ and $E_{3}$ denote the three elliptic curves. We have a one-to-one map from $E_{i}$ to $E_{\rho}$ and a three-to-one map from $E_{i}$ to $E_{\tau}$ for each $i=1, 2, 3$.

There are two cases to consider, associated with the two isomorphism classes of degree $3$ quotients of $E_\rho$:
\[
E_\rho \times E_{\rho / (1-\rho)} \quad E_\rho \times E_{\rho/3},
\]
where $E_{\rho / (1-\rho)}$ is the quotient of $\cc$ by $\frac{1}{1-\rho}\zz[1, \rho]$ and $E_{\rho/3}$ denotes the quotient of $\cc$ by $\zz[1, \rho/3]$. In the first case, {notice} that $\zz[3, 1-\rho] = (1-\rho)\zz[1, \rho]$ is an index $3$ subring of $\zz[1, \rho]$, and $E_{\rho}\simeq E_{1-\rho} = \cc / \zz[3,1-\rho]$ under its self-isogeny of degree $3$. In order to simplify the computation we can replace $E_\rho$ with $E_{1-\rho}$, and the first case becomes $E_{1-\rho} \times E_\rho$. We first rule out that situation.

\begin{claim}
The case $E_{1 - \rho} \times E_\rho$ cannot occur.
\end{claim}
\begin{proof}
We work with coordinates $(w, z)$. Complex multiplication by $\rho$ on the curve $E_{1-\rho}$ has fixed points $0, 1, 2$. Therefore, the automorphisms of $E_{1-\rho}\times E_\rho$ of interest for bielliptic quotients are:
\[
\varphi_{1}(w, z)= \left(\rho w, z+\frac{\gamma}{3}\right), \quad \varphi_{2}(w, z)=\left(w+k, z+\frac{\gamma'}{3}\right)
\]
for some $\gamma, \gamma'\in\zz[1, \rho]$ and $k= 1$ or $2$.

Notice that no $E_i$ can be numerically equivalent to the second factor of the product, since the $E_i$ must {all} intersect in a point and the $(\zz / 3 \zz \times \zz / 3 \zz)$-orbit the second factor is a collection of disjoint curves. Since each $E_i$ is isomorphic to the first factor, up to translation in the $z$-direction we can assume
\[
E_{1}=(w, \alpha_{1}w), \quad E_{2}=(w, \alpha_{2}w+a_{2}), \quad \quad E_{3}=(w, \alpha_{3}w+a_{3})
\]
where the $\alpha_{i}$'s and $a_{i}$'s are, as of yet, unknown. To be well-defined, we only need $\alpha_i \in \frac{1}{1 - \rho} \zz[1, \rho]$, but since each $E_i$ has intersection number $3$ with the curve $(w, 0)$, we see that $\alpha_i$ is a unit of $\zz[1, \rho]$, i.e., each $\alpha_i$ is a power of $\zeta = e^{\pi i / 3}$.

We now claim that the $\zz/3\zz$ isotropy group of each $E_i$ is generated by $\varphi_{2}$. To prove this, by Lemma \ref{fatto}, the isotropy group of each $E_i$ in $\zz / 3 \zz \times \zz / 3 \zz$ is isomorphic to $\zz / 3 \zz$. Since $\zz / 3 \zz \times \zz / 3 \zz$ acts transitively on the $E_i$, the stabilizers of each $E_i$ are all the same subgroup. We must rule out $\varphi_1$, $\varphi_1 \varphi_2$, and $\varphi_1 \varphi_2^2$ from being in the isotropy group of $E_i$, and it suffices to focus on $E_1$.

If $\varphi_1(E_1) = E_1$, for every $w \in E_1$ we would have some $w' \in E_1$ such that
\[
\varphi_{1}(w, \alpha_{1}w)=\left(\rho w, \alpha_{1}w+\frac{\gamma}{3}\right)=(w', \alpha_{1}w')
\]
which then implies that
\begin{align*}
\rho w - w' &\equiv 0 \ \mod{\zz[3, 1-\rho]} \\
\alpha_{1}(w-w') &\equiv \frac{\gamma}{3} \mod{\zz[1, \rho]}
\end{align*}
for all $w\in E_{1-\rho}$, and hence
\[
\alpha_{1}(\rho - 1)w \equiv \frac{\gamma}{3} \mod{\zz[1, \rho]}
\]
for all $w$. This is clearly impossible for transcendental $w$.

Similarly, one can show that the isotropy group cannot be generated by $\varphi_{1} \varphi_{2}$. In fact, the identity $\varphi_{1}(\varphi_{2}(E_{1}))=E_{1}$ gives
\[
\alpha_{1}(\rho-1)w=-\alpha_{1}k+\frac{\gamma}{3}+\frac{\gamma'}{3} \mod{\zz[1, \rho]}
\]
for all $w\in E_{1-\rho}$. This again cannot hold for all $w$. The same argument rules out $\varphi_1 \varphi_2^2$ after replacing $k$ with $2 k$ and $\frac{\gamma'}{3}$ with $2 \frac{\gamma'}{3}$. Thus the isotropy group of each $E_i$ must be generated by $\varphi_2$.

Consequently, $\varphi_1$ acts transitively on the $E_i$, so up to renumbering we can assume
\[
\varphi_{1}(E_{1})=E_{2}, \quad\varphi_{1}(E_{2})=E_{3}, \quad\varphi_{1}(E_{3})=E_{1}.
\]
From $\varphi_1(E_1) = E_2$, we see that for all $w \in \cc$, there is a $w' \in \cc$ such that
\[
\left(\rho w, \alpha_1 w + \frac{\gamma}{3} \right) = (w', \alpha_2 w' + a_2).
\]
In other words, $\rho w - w' \in \zz[3, 1-\rho]$ and
\[
\alpha_1 w + \frac{\gamma}{3} - \alpha_2 w' - a_2 \in \zz[1, \rho].
\]
Combining these two congruences gives
\[
(\alpha_1 - \alpha_2 \rho)w + \frac{\gamma}{3} - a_2 \in \zz[1, \rho].
\]
Since this holds for all $w$, we must have $(\alpha_{2}\rho-\alpha_{1}) \equiv 0 \mod{\zz[1, \rho]}$. It then follows that $\frac{\gamma}{3} - a_2 \in \zz[1, \rho]$. Analogous arguments show that
\[
\alpha_{3}\rho-\alpha_{2},\ 2\frac{\gamma}{3} - a_3,\ \alpha_{1}\rho-\alpha_{3} \in \zz[1, \rho].
\]

Enumerating all the possibilities for the $\alpha_i$, we see that there is always some $\alpha_i$ that is either $\pm 1$. First, assume $\alpha_i = 1$. From $\varphi_2(E_i) = E_i$, we then see that
\[
\varphi_{2}(w, w + a_i)= \left(w+k, w+a_i + \frac{\gamma'}{3} \right)
\]
(with $a_1 = 0$) for all $w \in \cc$, where the first coordinate is taken modulo $\zz[3, 1-\rho]$ and the second is modulo $\zz[1, \rho]$. If this equals $(w', w' + a_i)$, then $w \equiv w' \mod{\zz[1, \rho]}$ from $w + k \equiv w' \mod{\zz[3, 1-\rho]}$ and $k = 1$ or $2$. Combining this with
\[
w + a_i + \frac{\gamma'}{3} - w' - a_i \in \zz[1, \rho]
\]
allows one to conclude that $\frac{\gamma'}{3} \in \zz[1, \rho]$, which is a contradiction. Taking $\alpha_i=-1$ leads to the exact same contradiction. This rules out the case $E_{1 - \rho} \times E_\rho$.
\end{proof}

\begin{claim}
The case $E_\rho \times E_{\rho / 3}$ also cannot occur.
\end{claim}
\begin{proof}
Consider the automorphisms
\[
\varphi_{1}(w, z)= \left(\rho w, z+\frac{\gamma}{3} \right), \quad \varphi_{2}(w, z)= \left(w+k \frac{(1-\rho)}{3}, z+\frac{\gamma'}{3} \right),
\]
that define our bielliptic quotient, where $\gamma, \gamma'\in \zz[1, \rho/3]$ and $k = 1$ or $2$. Since $E_i$ is isomorphic to the first factor of the product, up to translation in the $z$-direction we can write
\[
E_{1}=(w, \alpha_{1}w), \quad E_{2}=(w, \alpha_{2}w+a_{2}), \quad \quad E_{3}=(w, \alpha_{3}w+a_{3}),
\]
where the $\alpha_{i}$'s and $a_{i}$'s are complex numbers to be determined. First, to give a well-defined elliptic curve, $\alpha_i$ must have the property that $\alpha_i \lambda \in \zz[1, \frac{\rho}{3}]$ for all $\lambda \in \zz[1, \rho]$, which actually implies that $\alpha_i \in \zz[1, \rho]$. Next, since for each $i=1, 2, 3$ we want $E_{i}\cdot (w, 0)=3$ {to obtain the desired configuration on the bielliptic surface}, $\alpha_{i}$ must be a root of unity.

We now claim that the group generated by $\varphi_{1} \varphi_2^r$ cannot be the isotropy group of the $(\zz / 3 \zz \times \zz / 3 \zz)$-action for $r = 0,1,2$. If $\varphi_{1} \varphi_2^r (E_{1})=E_{1}$, then for all $w \in \cc$, there is a $w' \in \cc$ such that
\[
\left(\rho w + r k \frac{(1 - \rho)}{3}, \alpha_1 w + \frac{\gamma}{3} + r \frac{\gamma'}{3} \right) = (w', \alpha_1 w')
\]
on $E_\rho \times E_{\rho / 3}$. This implies that $\rho w - w'$ is congruent to $r k \frac{(1-\rho)}{3}$ modulo $\zz[1, \rho]$, and we can conclude that
\[
\alpha_1(1 - \rho)w + \frac{\gamma}{3} + r \frac{\gamma'}{3} \in \zz[1, \rho/3].
\]
This cannot hold for all $w \in \cc$, so $\varphi_1 \varphi_2^r$ cannot stabilize $E_1$.

Then, up to renumbering we can assume
\[
\varphi_{1}(E_{1})=E_{2}, \quad\varphi_{1}(E_{2})=E_{3}, \quad\varphi_{1}(E_{3})=E_{1}.
\]
This means that, as points on $E_\rho \times E_{\rho / 3}$, we must have
\[
(\alpha_{2}\rho-\alpha_{1})=0,\quad a_{2}=\frac{\gamma}{3},\quad (\alpha_{3}\rho-\alpha_{2})=0,\quad a_{3}=2\frac{\gamma}{3},\quad (\alpha_{1}\rho-\alpha_{3})=0.
\]
This implies that $\{\alpha_{1}, \alpha_{2}, \alpha_{3}\}$ is of the form $\{u, u \rho^{2}, u \rho\}$ for some (possibly not primitive) $6^{th}$ root of unity $u$. Therefore, up to the automorphism of $E_\rho \times E_{\rho / 3}$ generated by complex multiplication by $u$ on $E_\rho$, we can assume that $u = 1$.

Since $\varphi_{2}(E_{1})=E_{1}$ and $E_1$ is the curve $(w,w)$, we see that
\[
k \frac{1-\rho}{3}=\frac{\gamma'}{3} \mod{\zz[1, \rho/3]}.
\]
Since $\frac{1-\rho}{3}=\frac{1}{3}\mod{\zz[1, \rho/3]}$, we obtain $\gamma'=k$, which is consistent with $\varphi_{2}(E_{i})=E_{i}$ for all $i=1, 2, 3$.

In conclusion, we have the configuration
\[
E_{1}=(w, w), \quad E_{2}=\left(w, \rho^{2}w+\frac{\gamma}{3}\right), \quad E_{3}=\left(w, \rho w+2\frac{\gamma}{3}\right)
\]
for some $\gamma \in \zz[1, \rho/3]$. Recall that we must have
\[
E_1 \cap E_2 = E_1 \cap E_3 = E_2 \cap E_3.
\]
For a point $(w,w)$ on $E_1 \cap E_2$, we have
\[
(1 - \rho^2) w - \frac{\gamma}{3} \in \zz[1, \rho/3],
\]
and for a point $(w,w)$ on $E_1 \cap E_3$ we similarly have
\[
(1 - \rho) w - 2 \frac{\gamma}{3} \in \zz[1, \rho/3].
\]

Consider the point $w = \frac{\gamma}{3(1-\rho^2)}$ on $E_1 \cap E_2$. For this point to lie on $E_1 \cap E_3$, we have
\[
(1-\rho) \left(\frac{\gamma}{3(1-\rho^2)}\right) - 2 \frac{\gamma}{3} = -(\rho + 2)\frac{\gamma}{3} \in \zz[1,\rho/3].
\]
Write $\gamma = a + b \frac{\rho}{3}$ for $a,b \in \{0, 1, 2\}$, which we can do because we only care about $\frac{\gamma}{3}$ as a $3$-torsion point on $E_{\rho/3}$. Then
\[
(-\rho - 2) \frac{\gamma}{3} = -\frac{6 a - b}{9} - \frac{3a + b}{9} \rho \in \zz[1, \rho/3].
\]
However, then $6 a - b$ is divisible by $9$ for $a,b \in \{0,1,2\}$. This is only possible if $a = b = 0$, which implies that we can take $\gamma = 0$ in the definition of $\varphi_1$. However, this is impossible, since $\varphi_1$ then cannot act freely on $E_\rho \times E_{\rho / 3}$. This contradiction proves the claim.
\end{proof}
This rules out all possibilities where $C_2$ is numerically equivalent to $A + \frac{1}{3} B$, which proves the proposition.
\end{proof}

We are then left with the case $C_2=\frac{1}{3}A+B$. Let $E_{1}$, $E_{2}$, and $E_{3}$ denote the three smooth elliptic curves in $\pi^{-1}(C_2)$. Since $C_2\cdot A=9$, we have that for each $i=1, 2, 3$ there is a three-to-one map from $E_{i}$ to $E_{\rho}$. Next, since $C_2\cdot B=3$ we have a one-to-one map from $E_{i}$ to $E_{\tau}$ for each $i=1, 2, 3$. We therefore conclude that $Y$ is a quotient of $E_{\rho}\times E_{\tau}$, where $E_{\tau}$ is a degree three cover of $E_{\rho}$.

As in previous cases, up to translation in the $w$-direction we can write
\[
E_{1}=(\alpha_{1}z, z), \quad E_{2}=(\alpha_{2}z+a_{2}, z), \quad E_{3}=(\alpha_{3}z+a_{3}, z).
\]
Here, each $E_{i}$ is a $1$-section of the map $E_{\rho}\times E_{\tau}\rightarrow E_{\tau}$ and $E_{i}\cdot E_{\tau}=3$, and it follows that each $\alpha_i$ is a power of $\zeta = e^{\pi i / 3}$.

Up to isomorphism, the degree three covers of $E_{\rho}$ are associated with the lattices $\zz[3, 1-\rho]$ and $\zz[3, \rho]$, {as one can easily see by enumerating the index $3$ subgroups of $\zz[1, \rho]$}. For simplicity, we let $E_{\tau}$ denote one of these degree three covers of $E_{\rho}$. The $\zz / 3 \zz\times \zz / 3 \zz$ automorphism group of $E_{\rho}\times E_{\tau}$ is then generated by the following commuting automorphisms of order three:
\[
\varphi_{1}(w, z)=\left(\rho w, z+\frac{\gamma}{3}\right), \quad \varphi_{2}(w, z)=\left(w+ k \frac{(1-\rho)}{3}, z+\frac{\gamma'}{3}\right)
\]
where $\gamma, \gamma'\in\zz[3, \rho]$ or $\gamma, \gamma'\in\zz[3, 1-\rho]$ and $k$ is an integer.

Next, the $\zz / 3 \zz\times \zz / 3 \zz$ group of automorphism must act transitively on the three elliptic curves $E_{1}, E_{2}, E_{3}$ with isotropy group $\zz / 3 \zz$. We claim that the isotropy group must be generated by $\varphi_{2}$. If not, assume that $\varphi_{2}(E_{1})=E_i$ for some $i \neq 1$. This implies that for all $w \in \cc$, there is a $w' \in \cc$ such that
\begin{align*}
w' &\equiv w + \frac{\gamma'}{3} \mod{\zz[1, \rho]} \\
\alpha_i w' + a_i &\equiv \alpha_1 w + k \frac{(1-\rho)}{3} \quad \mod{\zz[1, \rho]}.
\end{align*}
Combining these gives
\[
(\alpha_1 - \alpha_i) w - \alpha_i \frac{\gamma'}{3} + k \frac{(1-\rho)}{3} - a_i \in \zz[1, \rho]
\]
for all $w \in \cc$. Taking a transcendental $w$, this is impossible unless $\alpha_i = \alpha_1$, which is a contradiction. Therefore $\varphi_2$ is contained in the isotropy group of $E_1$, and hence generates the isotropy group of every $E_i$.

Now, up to renumbering we can assume $\varphi_1(E_i) = E_{i + 1}$, where the index $i$ is considered modulo $3$. We then have that
\[
\rho\alpha_{1}-\alpha_{2},\quad \rho\alpha_{2}-\alpha_{3},\quad \rho\alpha_{3}-\alpha_{1} \in \zz[1, \rho].
\]
As in previous cases, this implies that $\{\alpha_{1}, \alpha_{2}, \alpha_{3}\}$ is of the form $\{u, u \rho, u \rho^{2}\}$ for some $6^{th}$ root of unity $u$, and up to an automorphism of the Abelian surface, we can assume $u = 1$.

Then, $\varphi_{2}(E_{1})=E_{1}$ implies that $\frac{\gamma'}{3} \equiv k \frac{1-\rho}{3}$ modulo $\zz[1, \rho]$. Assume that $\gamma'\in \zz[3, \rho]$. We then have $\gamma'=3a+b\rho$ with $a, b\in\zz$, which implies that $k$ is $3$. However, $k$ must be congruent to $1$ or $2$ modulo $3$, since the $(\zz/3\zz \times \zz/3\zz)$-action {by definition has} nontrivial translation part on the first factor, so this is a contradiction.

We therefore conclude that $E_{\tau} = E_{1-\rho}$ is associated with the lattice $\zz[3, 1-\rho]$ and that $\gamma'=k(1-\rho)$ for $k$ an integer, since
\[
k(1 - \rho) - \gamma' \in 3 \zz[1, \rho] \subset \zz[3, 1-\rho].
\]
From the fact that $\varphi_{1}(E_{1})=E_{2}$ we obtain that $a_{2}=-\rho \frac{\gamma}{3}$. Similarly, since $\varphi_{1}(E_{2})=E_{3}$, we have $a_{3}=-2 \rho \frac{\gamma}{3}$. In conclusion, we have that the curves $E_{i}$ are uniquely determined by the equations
\[
E_{1}=(z, z), \quad E_{2}=\left(\rho z - \rho \frac{\gamma}{3}, z\right), \quad E_{3}=\left(\rho^{2}z - 2 \rho \frac{\gamma}{3}, z \right).
\]

Suppose first that $\gamma = 3 {\in \zz[3, 1-\rho]}$. We then have
\[
E_{1}=(z, z), \quad E_{2}=(\rho z, z), \quad E_{3}=(\rho^{2}z, z).
\]
and see that $E_{1}\cap E_{2}=E_{1}\cap E_{3}=E_{2}\cap E_{3}$ and $E_{1} \cap E_{2}$ equals
\begin{align*}
\Big\{&(0, 0), (0, 1), (0, 2), \\
&\left(\frac{1-\rho}{3}, \frac{1-\rho}{3}\right), \left(\frac{1-\rho}{3}, 1+\frac{1-\rho}{3}\right), \left(\frac{1-\rho}{3}, 2+\frac{1-\rho}{3}\right), \\
&\left(2 \big(\frac{1-\rho}{3}\big), 2 \big(\frac{1-\rho}{3}\big)\right), \left(2 \big(\frac{1-\rho}{3}\big), 1+2 \big(\frac{1-\rho}{3}\big)\right), \left(2 \big(\frac{1-\rho}{3}\big), 2+2 \big(\frac{1-\rho}{3}\big) \right) \Big\}.
\end{align*}
The $\zz/3\zz\times\zz/3\zz$ action acts transitively on these nine points, so we obtain a curve with a unique singular point of {order} three in the bielliptic quotient. In other words, the choice $\gamma = 3$ determines a bielliptic surface $Y$ with a curve $C_2$ numerically equivalent to $\frac{1}{3}A+B$ with a unique singular point of {order} $3$.

\begin{proposition}\label{FINALLYDONE}
Any other choice of $\gamma \in \zz[3, 1-\rho]$ either gives an isomorphic configuration as $\gamma = 3$ or gives a configuration that descends to a curve in the quotient with three singular points (and hence cannot determine a smooth toroidal compactification of Euler number one).
\end{proposition}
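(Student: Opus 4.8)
The plan is to reduce the statement to a count of orbits of the $\zz / 3 \zz\times \zz / 3 \zz$-action on the singular points upstairs, and then to identify the surviving configurations with the one coming from $\gamma = 3$.

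First I would record the only freedom that remains: $\frac{\gamma}{3}$ is a \emph{nonzero} $3$-torsion point of $E_{1-\rho}$ (nonzero so that $\varphi_{1}$ has order three), so there are eight cases to consider. Using that $\frac{\gamma}{3}$ is $3$-torsion one checks that $(1-\rho)\frac{\gamma}{3}\in\zz[1, \rho]$, and from this that the three curves
\[
E_{1}=(z, z), \quad E_{2}=\left(\rho z - \rho \frac{\gamma}{3}, z\right), \quad E_{3}=\left(\rho^{2}z - 2 \rho \frac{\gamma}{3}, z \right)
\]
meet pairwise in one and the same set of points, so that $E_{1}\cap E_{2}=E_{1}\cap E_{3}=E_{2}\cap E_{3}$ consists of nine triple points whose $z$-coordinates form a coset of $E_{1-\rho}[3]$. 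The key observation is then that $G=\langle\varphi_{1}, \varphi_{2}\rangle$ permutes these nine points, and that on the level of $z$-coordinates $\varphi_{1}$ acts by translation by $\frac{\gamma}{3}$ and $\varphi_{2}$ by translation by $k\frac{1-\rho}{3}$.

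Granting this, the number of singular points of the image curve $C_{2}$ equals the number of $G$-orbits on the nine triple points, namely $9/|\langle\frac{\gamma}{3}, \frac{1-\rho}{3}\rangle|$ (here $k$ is prime to $3$, so $k\frac{1-\rho}{3}$ and $\frac{1-\rho}{3}$ generate the same subgroup). Writing $\frac{\gamma}{3}=\frac{1-\rho}{3}(c+d\rho)$ with $c, d\in\{0, 1, 2\}$ and using $\frac{1-\rho}{3}$, $\frac{\rho(1-\rho)}{3}$ as a basis of $E_{1-\rho}[3]\cong(\zz / 3 \zz)^{2}$, this subgroup is everything when $d\neq 0$ and has order three when $d=0$. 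Thus the six choices with $d\neq 0$ (one of which is $\gamma=3$, corresponding to $(c, d)=(2, 1)$) produce a unique singular point, while the two nonzero choices with $d=0$, that is $\frac{\gamma}{3}\in\langle\frac{1-\rho}{3}\rangle$, produce three singular points and are therefore excluded. Equivalently, when $d=0$ the group $G$ contains a nontrivial element acting as a rotation on $E_{\rho}$ and trivially on $E_{1-\rho}$, whose fixed locus is a nonempty union of elliptic curves; so $G$ does not act freely and the quotient is not a bielliptic surface.

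It remains to show that the six configurations with $d\neq 0$ are all isomorphic to the one coming from $\gamma=3$. For this I would exhibit automorphisms of $E_{\rho}\times E_{1-\rho}$ that normalize $G$ and permute the six admissible values of $\frac{\gamma}{3}$ transitively, in the spirit of Lemma \ref{independence}: complex multiplication by units of $\zz[1, \rho]$ on the two factors together with the involution $(w, z)\mapsto(-w, -z)$. All of these preserve the line $\langle\frac{1-\rho}{3}\rangle$ determined by the translation part of $\varphi_{2}$, and on the complement of that line, which is exactly the six good values, they act as the full stabilizer of the line and hence transitively. Each such automorphism descends to an isomorphism of the bielliptic quotients carrying $C_{2}$ to $C_{2}$, which proves the proposition. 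This last step is the main obstacle: one must check that enough normalizing automorphisms genuinely exist and that they conjugate the quotient data correctly, so that the six good choices really do collapse to a single isomorphism class, as they must, since an extra class would yield a sixth compactification and contradict Theorem \ref{primo}.
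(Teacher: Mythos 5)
Your orbit-counting argument for the one-versus-three singular point dichotomy is correct, and it is cleaner than what the paper does: the paper verifies by explicit coordinate computations that $\gamma = 3$ yields one singular point and that $\gamma = 1-\rho$ yields three, whereas you observe directly that the nine triple points are parametrized by a coset of $E_{1-\rho}[3]$ on which $G$ acts by translations by $\frac{\gamma}{3}$ and $k\frac{1-\rho}{3}$, so the count is $9/\bigl|\langle\frac{\gamma}{3}, \frac{1-\rho}{3}\rangle\bigr|$. Your alternative exclusion of the $d=0$ cases (some $\varphi_1\varphi_2^{-m}$ acts trivially on $E_{1-\rho}$ and as a rotation on $E_\rho$, so $G$ does not act freely) is also valid and arguably preferable to the paper's.

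The gap is in the identification of the six good values into one isomorphism class. The group you propose consists of diagonal complex multiplications by units of $\zz[1,\rho]$ (your involution $(w,z)\mapsto(-w,-z)$ is just the unit $-1$), and it does not do what you claim. First, multiplication by $\rho$ does \emph{not} preserve the line $\langle\frac{1-\rho}{3}\rangle$ inside $E_{1-\rho}[3]$: having $\rho\cdot\frac{1-\rho}{3} \equiv m\frac{1-\rho}{3}$ modulo $\zz[3,1-\rho] = (1-\rho)\zz[1,\rho]$ would force $3 \mid (\rho - m)$ in $\zz[1,\rho]$, which is impossible since $N(\rho - m) = m^2+m+1$ is never divisible by $9$. (Multiplication by $\rho$ does preserve the corresponding line in $E_\rho[3]$, but the parameter $\frac{\gamma}{3}$ lives in $E_{1-\rho}[3]$.) Second, and fatally for transitivity, multiplication by $\rho$ \emph{fixes} the class $\frac{\gamma}{3}=1$, because $\rho - 1 = -(1-\rho) \in \zz[3,1-\rho]$; hence the orbit of $\gamma = 3$ under your order-six group is only $\{1,-1\}$, of cardinality two, not six. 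Four of the good values remain unaccounted for, and a priori they could give non-isomorphic pairs $(Y, C_2)$ and hence extra compactifications. The missing ingredient is exactly what the paper uses: translations by suitable torsion points, namely $\psi_2(w,z) = \left(w+\frac{2}{3}, z+\frac{2}{3}\right)$, which together with $\psi_1 = -1$ generate an orbit of size six through $\gamma = 3$. Finally, your fallback appeal to Theorem \ref{primo} to force the collapse of the six classes is circular, since this proposition is an ingredient in the proof of that theorem.
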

\begin{proof}
Define automorphisms
\[
\psi_1(w, z) = (-w, -z), \quad \psi_2(w, z) = \left(w + \frac{2}{3}, z + \frac{2}{3} \right)
\]
of $E_\rho \times E_{1 - \rho}$. Suppose that $\frac{\gamma}{3}$ a $3$-torsion point on $E_{1 - \rho}$, and consider the curves
\[
E_1 = (z, z) \quad E_2(\gamma) = \left(\rho z + \rho \frac{\gamma}{3}\right) \quad E_3(\gamma) = \left(\rho^2 + 2 \rho \frac{\gamma}{3}\right).
\]
A direct calculation shows that:
\begin{align*}
\psi_i(E_1) &= E_1 \quad i = 1,2\\
\psi_1(E_2(\gamma)) &= E_2(-\gamma) \\
\psi_2(E_2(\gamma)) &= E_2(\gamma + 2(1 - \rho)) \\
\psi_2^2(E_2(\gamma)) &= E_2(\gamma + (1 - \rho)) \\
\psi_1(E_3(\gamma)) &= E_3(-\gamma) \\
\psi_2(E_3(\gamma)) &= E_3(\gamma + 2(1 - \rho)) \\
\psi_2^2(E_3(\gamma)) &= E_3(\gamma + (1 - \rho))
\end{align*}
These maps descend to isomorphisms of the associated bielliptic quotients, and the orbit of $\gamma = 3$ under this action has cardinality six. In particular, six of the eight nontrivial $3$-torsion points on $E_{1 - \rho}$ all determine the same pair $(Y, C_2)$.

The remaining points not in the orbit of $\gamma = 3$ under the group generated by $\psi_1$ and $\psi_2$ are $\gamma = (1-\rho)$ and $2(1-\rho)$. We now show that these cases cannot give rise to a smooth toroidal compactification of Euler number one.

For $\gamma=(1-\rho)$, we have
\[
E_{1}=(z, z), \quad E_{2}=\left(\rho z-\frac{(1-\rho)}{3}, z\right), \quad E_{3}=\left(\rho^{2}z-2\frac{(1-\rho)}{3}, z \right).
\]
and see that $E_{1}\cap E_{2}=E_{1}\cap E_{3}=E_{2}\cap E_{3}$ and $E_{1} \cap E_{2}$ equals
\begin{align*}
\Big\{&\left(\frac{2}{3}, \frac{2}{3}\right), \left(\frac{2}{3}, \frac{2}{3}+1\right), \left(\frac{2}{3}, \frac{2}{3}+2\right), \\
&\left(\frac{2}{3}+\frac{1-\rho}{3}, \frac{2}{3}+\frac{1-\rho}{3}\right), \left(\frac{2}{3}+\frac{1-\rho}{3}, \frac{2}{3}+1+\frac{1-\rho}{3}\right), \\
&\left(\frac{2}{3}+\frac{1-\rho}{3}, \frac{2}{3}+2+\frac{1-\rho}{3}\right), \left(\frac{2}{3}+2 \big(\frac{1-\rho}{3}\big), \frac{2}{3}+2 \big(\frac{1-\rho}{3}\big)\right), \\
&\left(\frac{2}{3}+2 \big(\frac{1-\rho}{3}\big), \frac{2}{3}+1+2 \big(\frac{1-\rho}{3}\big)\right), \left(\frac{2}{3}+2 \big(\frac{1-\rho}{3}\big), \frac{2}{3}+2+2 \big(\frac{1-\rho}{3}\big) \right) \Big\}.
\end{align*}
Next, observe that the orbit of the point $(\frac{2}{3}, \frac{2}{3})$ under the action of the order three isomorphism $\varphi_{1}$ is 
\[
\left\{(\frac{2}{3}, \frac{2}{3}), \left(\frac{2}{3}+\frac{1-\rho}{3}, \frac{2}{3}+\frac{1-\rho}{3}\right), \left(\frac{2}{3}+2 \big(\frac{1-\rho}{3}\big), \frac{2}{3}+2 \big(\frac{1-\rho}{3}\big)\right)\right\}.
\]
Now,
\[
\varphi_{2}(w, z)= \left(w+k\frac{(1-\rho)}{3}, z+k\frac{(1-\rho)}{3}\right),
\]
so that the orbit of of the point $(\frac{2}{3}, \frac{2}{3})$ under this automorphism is the same as above. Therefore, the curve $C$ determined by the images of the $E_{i}$'s in the bielliptic quotient determined by $\varphi_1$ and $\varphi_2$ has three {order} three singular points. In particular, it does not satisfy the criteria necessary to determine a smooth toroidal compactification of Euler number one. {Indeed, the proper transform of this curve under the blowup at one point will remain singular, and hence cannot be one of the smooth elliptic curves in a compactification divisor.}

The case $\gamma=2(1-\rho)$ is isomorphic to the first under $\psi_1$, and hence also cannot occur. This rules out $\gamma = k(1 - \rho)$, $k = 1,2$ from consideration, and completes the proof of the proposition.
\end{proof}

Thus the bielliptic surface $Y$ and the curve $C_2$ are therefore uniquely determined. Next, we have to find smooth elliptic curves $E$ in $Y$ intersecting $C_2$ in its unique singular point such that $C_2\cdot E=3$.

First, consider the unique fiber $C_1$ of the Albanese map, which is numerically equivalent to $B$, passing through the singular point of $C_2$. Since $C_2\cdot B=3$, the intersection is transverse. Thus, consider the pair $(Y, C)$, where $C = C_1 + C_2$, and let $X$ be the blowup of $Y$ at the singular point $p$ of $C_2$. Let $D_1$ and $D_2$ be the proper transforms of $D_1$ and $D_2$, respectively, and set $D = D_1 + D_2$. Then $(X, D)$ saturates the logarithmic Bogomolov--Miyaoka--Yau inequality, and hence defines a smooth toroidal compactification. 

Next, notice that the singular point $p\in C_2$ corresponds to the point $(0, 0)\in E_{\rho}\times E_{\tau}$. Therefore, the fiber $C_1'$ of $\pi_{2}$ passing through the point $p$ must be a multiple fiber, and $C_1'$ is numerically equivalent to $\frac{A}{3}$. Since $C_2\cdot \frac{A}{3}=3$, the intersection of $C_1'$ with $C_2$ is transverse. Consequently, consider the pair $(Y, C)$ with $C = C_1' + C_2$, let $X$ be the blowup of $Y$ at the singular point $p$ of $C_2$, and let $D_1', D_2$ be the proper transforms of $C_1'$ and $C_2$, respectively. Again, $(X, D')$, where $D' = C_1' + C_2$, saturates the logarithmic Bogomolov--Miyaoka--Yau inequality.

The exact same argument as in \S \ref{zeta3} shows that $(X, D)$ and $(X, D')$ are the unique smooth toroidal compactifications arising from a $\zz / 3 \zz \times \zz / 3 \zz$ bielliptic surface. Indeed, we again see that such a curve $C_1$ numerically equivalent to $\frac{k_1}{3} A + \frac{k_2}{3} B$ has self-intersection $2 k_1 k_2 = 0$, so $C_1$ is a multiple of $\frac{1}{3}A$ or $\frac{1}{3}B$. Then $C_1 \cdot C_2 = 3$ leaves us with only the two possibilities considered above.

\subsection{Discussion of the Fourth and Fifth Examples}\label{fourthfifth}

Let $(X, D)$ and $(X, D')$ be the toroidal compactifications found in \S \ref{z3z3}. We want to show that these compactifications are associated with two distinct complex hyperbolic surfaces. Assume this is not the case. There exists an automorphism $\Psi: X\rightarrow X$ sending $D_1' + D_2$ to $D_1 + D_2$. The first claim is that we necessarily must have $\Psi(D_2)=D_2$ and $\Psi(D_1')=D_1$. Observe that $X$ is the blowup at a single point of a bielliptic surface. Thus, inside $X$ there is a unique rational curve that is the exceptional divisor say $E$. This implies that $\Psi(E)=E$. Now $E\cdot D_2=3$ while $E\cdot D_1=E\cdot D_1'=1$. The claim then follows.

Let $Y$ denote the bielliptic surface obtained by contracting the exceptional divisor $E$. Also, let $C_1$, $C_1'$, and $C_2$ denote the blow down transform of the curves $D_1$, $D_1'$, and $C_2$. Next, let $\psi: Y\rightarrow Y$ denote the automorphism on $Y$ induced by $\Psi$ on $X$. Observe that $\psi(C_2)=C_2$ and $\psi(C_1')=C_1$.

\begin{fact}\label{fact1}
There exists no such automorphism of $X$.
\end{fact}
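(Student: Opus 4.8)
The plan is to argue as in Fact \ref{fact}, by passing to the automorphism $\psi_{*}$ induced on $\mathrm{Num}(Y)$ and producing a contradiction with the requirement that $\psi_{*}$ preserve the \emph{integral} lattice $\mathrm{Num}(Y)$. By the discussion preceding the statement, any automorphism $\Psi$ of $X$ carrying $D_{1}'+D_{2}$ to $D_{1}+D_{2}$ must fix the exceptional divisor $E$ and hence descends to an automorphism $\psi$ of $Y$ satisfying $\psi(C_{2})=C_{2}$ and $\psi(C_{1}')=C_{1}$. Recall from \S\ref{z3z3} that $C_{1}$ is numerically equivalent to $B$, that $C_{1}'$ is numerically equivalent to $\frac{1}{3}A$, and that $\left\{\frac{1}{3}A,\frac{1}{3}B\right\}$ is a $\zz$-basis of $\mathrm{Num}(Y)$.

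The key computation is then immediate. Since $\psi(C_{1}')=C_{1}$, the induced map satisfies $\psi_{*}\!\left(\frac{1}{3}A\right)=B=3\cdot\frac{1}{3}B$. But $\frac{1}{3}A$ is a \emph{primitive} vector of $\mathrm{Num}(Y)$, being a basis element, whereas $B$ is divisible by $3$ in $\mathrm{Num}(Y)$. A lattice automorphism preserves the divisibility index of every vector, so it cannot send a primitive class to a $3$-divisible one, and no such $\psi$ --- hence no such $\Psi$ --- can exist. One may equivalently invoke $\psi(C_{2})=C_{2}$: writing $C_{2}\sim\frac{1}{3}A+B$ and using linearity forces $\psi_{*}(B)=\frac{1}{3}A$, so that $\psi_{*}\!\left(\frac{1}{3}B\right)=\frac{1}{9}A=\frac{1}{3}\!\left(\frac{1}{3}A\right)\notin\mathrm{Num}(Y)$, reproducing exactly the obstruction of Fact \ref{fact}.

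The point deserving attention is \emph{why} the argument must be phrased in this form, since it is precisely here that the $\zz/3\zz\times\zz/3\zz$ case diverges from the $\zz/3\zz$ case. In Fact \ref{fact} the conclusion was instantaneous because $\psi_{*}\!\left(\frac{1}{3}A\right)=\frac{1}{3}B$ literally fell outside the lattice, whose basis was $\left\{\frac{1}{3}A,B\right\}$. Now both $\frac{1}{3}A$ and $\frac{1}{3}B$ belong to $\mathrm{Num}(Y)$, so the crude ``lands outside the lattice'' obstruction evaporates and one must instead distinguish $C_{1}$ and $C_{1}'$ by the finer invariant of divisibility: the multiple fiber $\frac{1}{3}A$ is primitive while the Albanese fiber $B$ is three times a primitive class. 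The only step requiring genuine care is confirming that $\psi_{*}$ respects the integral structure of $\mathrm{Num}(Y)$ and not merely its rationalization, which holds because $\psi$ is a biholomorphism of $Y$ and therefore acts on $\mathrm{Num}(Y)$ as a genuine lattice automorphism.
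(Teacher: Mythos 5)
Your proposal is correct and follows essentially the same route as the paper: the paper derives $\psi_{*}(B)=\frac{1}{3}A$ from $\psi(C_2)=C_2$ and $\psi(C_1')=C_1$ and concludes $\psi_{*}\bigl(\frac{1}{3}B\bigr)=\frac{1}{9}A\notin\mathrm{Num}(Y)$, which is exactly your second formulation. Your first formulation via preservation of primitivity (so $\frac{1}{3}A$ cannot map to the $3$-divisible class $B$) is a clean equivalent repackaging of the same obstruction.
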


\begin{proof}
Recall that $C_1'$ is numerically equivalent to $\frac{A}{3}$ and $C_1$ is numerically equivalent to $B$. Let $\psi_{*}: \mathrm{Num}(Y)\rightarrow \mathrm{Num}(Y)$ be the induced automorphism. Since $\psi(C_{2})=C_{2}$ and $\psi(C_{1}')=C_{1}$, we have $\psi_{*}(B)=\frac{A}{3}$ and then
\[
\psi_{*}\big( \frac{1}{3} B \big)= \frac{1}{9} A \notin \mathrm{Num}(Y),
\]
which is a contradiction.
\end{proof}

\section{Recap of the Five Examples}\label{recap}

We now give a concise recap of the five examples constructed above. Recall that $\rho = e^{2 \pi i / 3}$, $\zeta = e^{\pi i / 3}$, and $E_\rho$ is the elliptic curve $\cc / \zz[1, \rho]$.

\subsection{Example 1}

Consider $E_\rho \times E_\rho$ with coordinates $(w, z)$, and consider the curves $C_1^{(1)}, \dots, C_4^{(1)}$ on $Y_1$ defined by
\[
w=0,\quad z=0,\quad w=z, \quad w=\zeta z.
\]
Then $C_1^{(1)} \cap \cdots \cap C_4^{(1)} = \{(0, 0)\}$. Let $X_1$ be the blowup of $E_\rho \times E_\rho$ at $(0,0)$, $D_i^{(1)}$ be the proper transform of $C_i^{(1)}$ to $X_1$, and $D_1 = \sum D_i^{(1)}$. Our first example (originally due to Hirzebruch \cite{Hir84}), is the pair $(X_1, D_1)$.

\subsection{Example 2}

Let $Y_2$ be bielliptic quotient of $ E_\rho \times E_\rho$ defined by the automorphism
\[
\varphi(w, z) = \left( \rho w, z + \frac{(1 - \rho)}{3} \right)
\]
{of order $3$}. Let $C_1^{(2)}$ be the image on $Y_2$ of the curve $w = 0$ on $E_\rho \times E_\rho$, i.e., a fiber of the Albanese fibration of $Y_2$, and $C_2^{(2)}$ be the curve on $Y_2$ defined by the images of the curves 
\[
E_1 = (z, z), \quad E_2 = \left(\rho z-\frac{(1-\rho)}{3}, z\right), \quad E_3 = \left(\rho^2 z-2\frac{(1-\rho)}{3}, z\right)
\]
on $ E_\rho \times E_\rho$. Then $C_1^{(2)} \cap C_2^{(2)}$ is the image on $Y_2$ of the origin in $E_\rho \times E_\rho$. This point is a singular point on $C_2^{(2)}$ of {order} $3$ and is the unique singular point on that curve. Let $X_2$ be the blowup of $Y_2$ at this point. For $i=1, 2$, let $D_i^{(2)}$ be the proper transform of $C_i^{(2)}$ in $X_2$, and define $D_2 = D_1^{(2)} + D_2^{(2)}$. The pair $(X_2, D_2)$ is our second example.

\subsection{Example 3}

Let $Y_3 = Y_2$, $X_3 = X_2$, and $C_2^{(3)} = C_2^{(2)}$ be as in the second example. Consider the fibration $X_3 \to \pp^1$ associated with the first coordinate projection of $E_\rho \times E_\rho$ and let $C_1^{(3)}$ be the fiber passing through the unique singular point of $C_2^{(3)}$. Note that $C_{1}^{(3)}$ is not a multiple fiber of the fibration $X_{3}\rightarrow \pp^{1}$. For $i=1, 2$, let $D_i^{(3)}$ be the proper transform of $C_i^{(3)}$ in $X_3$, and $D_3 = D_1^{(3)} + D_2^{(3)}$. Then $(X_3, D_3)$ is our third example.

\subsection{Example 4}

Let $E_{1-\rho}$ be the quotient of $\cc$ by $\zz[3, 1-\rho] = (1-\rho) \zz[1, \rho]$, consider $E_\rho \times E_{1-\rho}$, and let $Y_4$ be the bielliptic quotient defined by the automorphisms
\[
\varphi_1(w, z) = (\rho w, z + 1), \quad \varphi_2(w, z) = \left(w + \frac{(1-\rho)}{3}, z + \frac{(1-\rho)}{3} \right),
\]
{which have order $3$ and generate an abelian group of order $9$.} Suppose that $C_2^{(4)}$ is the curve on $Y_4$ defined by the images of the curves
\[
E_1 = (z, z), \quad E_2 = (\rho z, z), \quad E_3 = (\rho^2 z, z)
\]
on $E_\rho \times E_{1-\rho}$, and $C_1^{(4)}$ is the fiber of the Albanese map of $Y_{4}$ passing through the unique singular point $p$ of $C_2^{(4)}$. Then, let $X_4$ be the blow up of $Y_{4}$ at the point $p$. For $i=1, 2$, let $D_i^{(4)}$ be the proper transform of $C_i^{(4)}$ in $X_4$. Finally, let $D_4 = D_1^{(4)} + D_2^{(4)}$. The fourth example is the pair $(X_4, D_4)$.

\subsection{Example 5}

We take $Y_5 = Y_4$, $C_2^{(5)} = C_2^{(4)}$, and $X_5 = X_4$. Let $Y_{5}\rightarrow \pp^{1}$ be the fibration associated with the first coordinate projection of $E_{\rho}\times E_{1-\rho}$. Let $C_1^{(5)}$ be the fiber of such a fibration passing through the unique singular point of $C_{2}^{(5)}$. More precisely, $C_{1}^{(5)}$ is the support of a multiple fiber of the fibration $X_{5}\rightarrow \pp^{1}$. For $i=1, 2$, let $D_i^{(5)}$ be the proper transform of $C_i^{(5)}$ in $X_5$. Finally, define $D_5 = D_1^{(5)} + D_2^{(5)}$. Our fifth and final example is the pair $(X_5, D_5)$.

\begin{lemma}\label{alldistinct}
The above examples are mutually distinct.
\end{lemma}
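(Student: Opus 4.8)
The plan is to distinguish the five complex hyperbolic surfaces $M_i = X_i \smallsetminus D_i$ pairwise. The governing principle is that a biholomorphism $M_i \to M_j$ between finite-volume complex hyperbolic surfaces is an isometry for the intrinsic complex hyperbolic metric, hence is induced by an element of $\PU(2,1)$ conjugating the two lattices, and therefore extends to an isomorphism of the canonical smooth toroidal compactifications $(X_i, D_i) \to (X_j, D_j)$. Under this correspondence, any invariant of the pair $(X_i, D_i)$ becomes an invariant of $M_i$, and it suffices to rule out isomorphisms of pairs. I would organize the ten pairwise comparisons into three groups.

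First I would separate Example 1 from the rest by counting cusps. The number of cusps of $M_i$ equals the number of connected components of $D_i$, since the $D_i$ are disjoint smooth elliptic curves and each corresponds to one end of $M_i$; this is a topological invariant. Example 1 has $D_1 = \sum_{k=1}^{4} D_k^{(1)}$ with four components, whereas $D_2, \dots, D_5$ each have exactly two components. Hence $M_1$ is distinct from $M_2, \dots, M_5$, disposing of the four pairs involving Example 1.

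Next I would separate the two $\zz / 3 \zz$ examples $\{M_2, M_3\}$ from the two $\zz / 3 \zz \times \zz / 3 \zz$ examples $\{M_4, M_5\}$ by showing the underlying surfaces are non-isomorphic. The key point is that a bielliptic surface contains no rational curves, because it is finitely covered by an Abelian surface and an Abelian surface carries no rational curves; consequently the blowup $X_i$ has a \emph{unique} $(-1)$-curve, namely the exceptional divisor $E_i$. Thus any isomorphism $X_2 = X_3 \to X_4 = X_5$ must carry $E_2$ to $E_4$ and therefore descends to an isomorphism of the minimal models $Y_2 \to Y_4$. But $Y_2$ is bielliptic of type $\zz / 3 \zz$ while $Y_4$ is of type $\zz / 3 \zz \times \zz / 3 \zz$, and surfaces of distinct Bagnera--de Franchis type are non-isomorphic (one may distinguish them, for instance, by the multiple fibers of the fibration over $\pp^1$ coming from Theorem \ref{bagnera}, or by their fundamental groups). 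This rules out the four pairs $(M_2, M_4)$, $(M_2, M_5)$, $(M_3, M_4)$, $(M_3, M_5)$.

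Finally, the two remaining pairs lie within a single underlying surface and are exactly the content of \S \ref{secondthird} and \S \ref{fourthfifth}: Fact \ref{fact} shows there is no automorphism of $X_2 = X_3$ carrying $D_3$ to $D_2$, so $M_2 \not\cong M_3$, and Fact \ref{fact1} gives the analogous statement for $X_4 = X_5$, so $M_4 \not\cong M_5$. Together with the first two steps this settles all ten pairs. The main obstacle is the middle step, and specifically the reduction on which it rests: one must justify that an isomorphism of the blown-up surfaces descends to their minimal models, and the essential ingredient is precisely the absence of rational curves on bielliptic surfaces, which forces the exceptional divisor to be the unique $(-1)$-curve and thus to be preserved.
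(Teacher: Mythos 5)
Your proof is correct, and it reaches the same two ``hard'' pairs as the paper does --- namely $(X_2,D_2)$ vs.\ $(X_3,D_3)$ and $(X_4,D_4)$ vs.\ $(X_5,D_5)$, which both you and the paper dispose of by citing Fact \ref{fact} and Fact \ref{fact1} --- but your route to separating the three groups $\{1\}$, $\{2,3\}$, $\{4,5\}$ is genuinely different. The paper does this in one stroke by computing $H_1$ of the compactifications: since blowing up does not change $\pi_1$, one gets $H_1(X_1)=\zz^4$, $H_1(X_2)=H_1(X_3)=\zz^2\oplus\zz/3\zz$, and $H_1(X_4)=H_1(X_5)=\zz^2$ (the bielliptic computations being quoted from \cite{Ser90}), and these three groups are pairwise non-isomorphic. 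You instead split the task: you separate Example 1 from the rest by counting cusps (four boundary components versus two), and you separate the $\zz/3\zz$ examples from the $(\zz/3\zz)^2$ examples by arguing that the exceptional divisor is the unique $(-1)$-curve on the blowup of a bielliptic surface (no rational curves on the minimal model), so any isomorphism descends to the minimal models, which have distinct Bagnera--de Franchis type. Your descent argument is sound and is in fact the same mechanism the paper uses inside \S \ref{secondthird} and \S \ref{fourthfifty} is not a reference --- inside \S \ref{secondthird} and \S \ref{fourthfifth} --- to force $\Psi(E)=E$; and your opening observation that a biholomorphism of the open hyperbolic manifolds extends to an isomorphism of the compactified pairs is the (implicit) justification for the paper's reduction to automorphisms of $X$. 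The paper's $H_1$ computation is shorter and handles all inter-group comparisons uniformly; your version avoids quoting Serrano's homology computation and makes the geometric reason for rigidity of the exceptional divisor explicit. Note also that at the final step one should distinguish the minimal models anyway (e.g.\ by type or by $H_1$), so the two arguments ultimately lean on the same classification data.
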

\begin{proof}
We already know from \S \ref{secondthird} that the second and third are distinct, and from \S \ref{fourthfifth} that the fourth and fifth are distinct. For the remaining distinctions, it suffices to compute the first homology groups of the compactifications. Recall that the blow up operation leaves the fundamental group, and hence the first homology group, unchanged. The first example is the blow up at one point of an Abelian surface $Y=E_{\rho}\times E_{\rho}$, so $H_{1}(Y, \zz)=\zz^{4}$. The second and third examples are the blow up of a $\zz/3\zz$ bielliptic surface $Y_{2}$, where $H_{1}(Y_{2}, \zz)=\zz^{2}\oplus\zz/3\zz$ (\cite{Ser90} p.\ 531). On the other hand, the fourth and fifth examples are the blow up of a $\zz/3\zz\times \zz/3\zz$ bielliptic surface $Y_{4}$. In this case, one can compute that $H_{1}(Y_{4}, \zz)=\zz^{2}$ (\cite{Ser90} p.\ 531). We then have that all five examples are distinct.
\end{proof}

\begin{remark}
It is interesting to note that, although the complex hyperbolic surfaces with cusps identified in Example 2 and 3 (or Example 4 and 5) are not isomorphic, they nevertheless have biholomorphic smooth toroidal compactifications. We have recently constructed arbitrarily large families of distinct ball quotients with biholomorphic smooth toroidal compactifications. For more details we refer to \cite{DS15}.
\end{remark}

\section{Proof of Theorem \ref{primo}}\label{thmproof}

We showed above that there are exactly five complex hyperbolic $2$-manifolds of Euler number one that admit a smooth toroidal compactification. It remains to show that these five manifolds are commensurable, i.e., that they all share a common finite-sheeted covering. Since Hirzebruch's ball quotient is arithmetic \cite{Holzapfel}, arithmeticity of the other four examples follows immediately.

Let $\rho = e^{2 \pi i / 3}$, $k = \qq(\rho)$, and $\mathcal{O}_k = \zz[1, \rho]$ be its ring of integers. Holzapfel showed that Hirzebruch's example $(X_0, D_0)$ has fundamental group
\[
\Gamma_0 = \pi_1(X_0 \smallsetminus D_0)
\]
a subgroup of index $72$ in the \emph{Picard modular group} $\Gamma = \PU(2, 1; \mathcal{O}_k)$ associated with a hermitian form on $k^3$ of signature $(2,1)$. Considering the volume of the Picard modular orbifold $\mathcal{H}^2 / \Gamma$, it follows that any subgroup $\Gamma' \subset \Gamma$ of index $72$ determines a quotient of $\mathcal{H}^2$ with Euler--Poincar\'e characteristic one. Consequently, if such a $\Gamma'$ is torsion-free and every parabolic element of $\Gamma'$ is rotation-free, then it defines a complex hyperbolic manifold $\mathcal{H}^2 / \Gamma'$ that admits a smooth toroidal compactification. Since we classified all such complex hyperbolic manifolds above, any $\Gamma'$ with these properties defines one of our five smooth toroidal compactifications.

In \cite{Stover}, the appendix contains eight {nonisomorphic} torsion-free subgroups of the Picard modular group $\Gamma$ of index $72$. In particular, the associated quotients of $\mathcal{H}^2$ are distinct, smooth, and have Euler number one. If we show that exactly five of these subgroups have rotation-free parabolic elements, then these lattices must determine the five smooth toroidal compactifications described in this paper. In particular, the complex hyperbolic manifolds associated with these five surfaces must be commensurable and arithmetic, which proves Theorem \ref{primo}.

The strategy of proof is computational, using the presentation for $\Gamma$ given by Falbel and Parker \cite{FP}. Falbel and Parker showed that $\Gamma$ has a presentation on generators $R$, $P$, and $Q$. Representatives in $\mathrm{GL}_3(\zz[1, \rho])$ for $R, P, Q \in \PU(2, 1)$ are given by:
\begin{align*}
R &= \begin{pmatrix} 0 & 0 & 1 \\ 0 & -1 & 0 \\ 1 & 0 & 0 \end{pmatrix} \\
P &= \begin{pmatrix} 1 & 1 & \rho \\ 0 & \rho & -\rho \\ 0 & 0 & 1 \end{pmatrix} \\
Q &= \begin{pmatrix} 1 & 1 & \rho \\ 0 & -1 & 1 \\ 0 & 0 & 1 \end{pmatrix}
\end{align*}
Moreover, $\mathcal{H}^2 / \Gamma$ has one cusp, and the unique conjugacy class of parabolic subgroups associated with the cusp is represented by $\Delta = \langle P, Q \rangle$, and $\Delta$ fits into an exact sequence
\[
1 \to \zz \to \Delta \to \Delta(2,3,6) \to 1,
\]
where $\Delta(2,3,6)$ is the $(2,3,6)$ triangle group, and
\[
\Delta = \langle P, Q\ |\ (P Q^{-1})^6, P^3 Q^{-2} \rangle.
\]

Given a finite index subgroup $\Gamma' \subset \Gamma$, the conjugacy classes of parabolic subgroups of $\Gamma'$ are then represented (perhaps with repetition) by the groups
\[
\Delta_\sigma = \sigma \Delta \sigma^{-1} \cap \Gamma',
\]
where $\sigma$ runs over all coset representatives of $\Gamma'$ in $\Gamma$. To check that a given $\Delta_\sigma$ contains only rotation-free elements, it suffices to check generators for $\Delta_\sigma$. Indeed, to check that a parabolic group is rotation-free, it suffices to check that its generators in an appropriate basis are strictly upper-triangular (i.e., have all $1$s on the diagonal).

Using Magma \cite{Magma}, we enumerated the eight torsion-free lattices in \cite{Stover}, and calculated generators for a representative of each conjugacy class of parabolic subgroups (a Magma routine that describes these lattices and finds the conjugacy classes of parabolic subgroups was written for \cite{Stover}, and is available on the second author's website). See \S \ref{subsec:Parabolics} for more details. Using the explicit matrices given by Falbel and Parker's generators, we see that exactly five of these lattices determine smooth toroidal compactifications, namely the third, fourth, fifth, seventh, and eighth examples from the appendix to \cite{Stover}. These five manifolds must be the five examples described in this paper, and this completes the proof of Theorem \ref{primo}. \qed

\begin{remark}
Considering $1^{st}$ homology groups, it is clear that the third example in \cite{Stover} is Hirzebruch's example, the fourth and seventh arise from the $\zz / 3 \zz$ bielliptic surface, and the fifth and eighth arise from the $\zz / 3 \zz \times \zz / 3 \zz$ bielliptic {surface}.
\end{remark}

{In Figure \ref{fig:Commensurability} we give the commensurability relations between the five manifolds, where $X_1, \dots X_5$ are the manifolds described in \S \ref{recap}. These were computed with the Magma \cite{Magma} code from \cite{Stover}.}

\begin{figure}
\begin{center}
\begin{tikzpicture}
\node (node1) at (0,0) {$X_1$};
\node (node4) at (4,0) {$X_4$};
\node (node5) at (2,0) {$X_5$};
\node (node2) at (-4,0) {$X_2$};
\node (node3) at (-2,0) {$X_3$};
\node (node24) at (-1,-2) {$X_{2,4}$};
\node (node14) at (4,-2) {$X_{1,4} = X_{1,5} = X_{4,5}$};
\node (node12) at (-5,-2) {$X_{1,2}$};
\node (node13) at (-3,-2) {$X_{1,3}$};
\node (node35) at (1,-2) {$X_{3,5}$};
\node (node25) at (3,-4) {$X_{2,5} = X_{3,4}$};
\node (node23) at (-4,-4) {$X_{2,3}$};
\draw [->] (node12) -- (node1);
\draw [->] (node12) -- (node2);
\draw [->] (node13) -- (node1);
\draw [->] (node13) -- (node3);
\draw [->] (node14) -- (node1);
\draw [->] (node14) -- (node4);
\draw [->] (node14) -- (node5);
\draw [->] (node35) -- (node3);
\draw [->] (node35) -- (node5);
\draw [->] (node24) -- (node2);
\draw [->] (node24) -- (node4);
\draw [->] (node23) -- (node12);
\draw [->] (node23) -- (node13);
\draw [->] (node25) -- (node35);
\draw [->] (node25) -- (node14);
\draw [->] (node25) -- (node24);
\end{tikzpicture}
\caption{Commensurability relations between the five examples. Each arrow represents a $3$-fold covering.}\label{fig:Commensurability}
\end{center}
\end{figure}
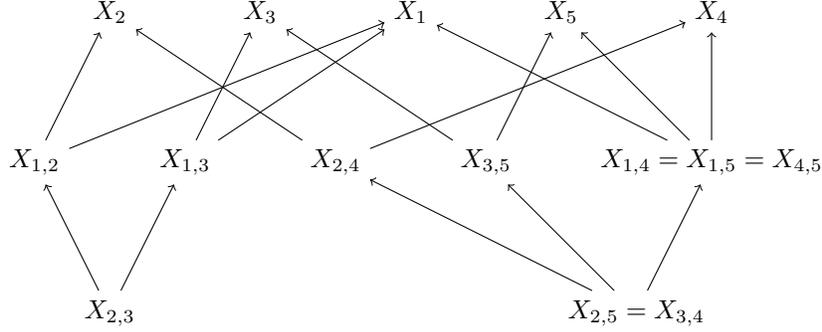

\subsection{More on parabolic subgroups}\label{subsec:Parabolics}

{In this section, we give a few remarks that explicitly connect the cusps of our examples to the group structure of the associated Picard modular group and its unique cusp subgroup.} Retaining the notation from the proof of Theorem \ref{primo}, we consider the following three subgroups of $\Delta$:
\begin{align*}
\Delta_6 &= \langle [Q, P]\, ,\, [P^{-1}, Q] \rangle \\
\Delta_{18, a} &= \langle P^3\, ,\, [Q P Q, P]\, ,\, [Q P^{-1} Q, P] \rangle \\
\Delta_{18, b} &= \langle P^3\, ,\, P Q P Q^{-1} P\, ,\, [Q P Q, P^{-1} Q P] \rangle \\
\Delta_{54} &= \langle [Q P Q, P]\, ,\, P^{-1} (Q P)^2 Q P^{-1} Q^{-1}  \rangle
\end{align*}
Here, $[x, y] = x y x^{-1} y^{-1}$, which we note is the opposite of Magma's notation. The integer part of the subscript denotes the index in $\Delta$, and $\Delta_{18, a}$ is not conjugate to $\Delta_{18, b}$. It is easy to check that each generator is a parabolic with trivial rotational part, which implies that any conjugate in $\PU(2, 1)$ of a $\Delta_i$ is rotation-free.

Also, for each integer $k \ge 1$, define abstract nil $3$-manifold groups
\[
N_k = \left\langle a, b, c\ |\ [a, c]\, ,\, [b, c]\, ,\, [a,b]c^{-k} \right\rangle.
\]
One can check with Magma that
\begin{align*}
\Delta_6 \cong \Delta_{54} &\cong N_1 \\
\Delta_{18, a} \cong \Delta_{18, b} &\cong N_3.
\end{align*}
Suppose that $N_k$ is the maximal parabolic subgroup of a lattice $\Gamma$ in $\PU(2, 1)$, and it is rotation-free. Then $\mathcal{H}^2 / \Gamma$ has a cusp associated with $N_k$, and this cusp can be smoothly compactified by an elliptic curve of self-intersection $-k$ (see \cite[\S 4.2]{HolzBook}). In particular, $\Delta_6$ and $\Delta_{54}$ will be associated with cusps of self-intersection $-1$ and $\Delta_{18, a}, \Delta_{18, b}$ with cusps of self-intersection $-3$.

We now tabulate the conjugacy classes of parabolic subgroups for each of the five examples in this paper. We identify the $\Delta_i$ for which some conjugate of $\Delta_i$ in the Picard modular group appears as a maximal parabolic subgroup of the lattice in $\PU(2, 1)$, but leave it to the reader to calculate the exact conjugates.

\medskip

\begin{center}
\begin{tabular}{| c | c |}
\hline
Compactification & Parabolic subgroups \\
\hline
Hirzebruch's example & \parbox[t]{5cm}{$\Delta_6$ ($3$ distinct conjugacy classes) \\ $\Delta_{54}$ ($1$ conjugacy class)} \\
\hline
\parbox[t]{3cm}{$\zz / 3 \zz$ bielliptic \# 1 \\ (\# 4 in \cite{Stover})} &\parbox[t]{5cm}{$\Delta_{18, a}$ ($1$ conjugacy classes) \\ $\Delta_{54}$ ($1$ conjugacy class)} \\
\hline
\parbox[t]{3cm}{$\zz / 3 \zz$ bielliptic \# 2 \\ (\# 7 in \cite{Stover})} &\parbox[t]{5cm}{$\Delta_{18,a }$ ($1$ conjugacy classes) \\ $\Delta_{54}$ ($1$ conjugacy class)} \\
\hline
\parbox[t]{3.3cm}{$(\zz / 3 \zz)^2$ bielliptic \# 1 \\ (\# 5 in \cite{Stover})} &\parbox[t]{5cm}{$\Delta_{18, b}$ ($1$ conjugacy classes) \\ $\Delta_{54}$ ($1$ conjugacy class)} \\
\hline
\parbox[t]{3.3cm}{$(\zz / 3 \zz)^2$ bielliptic \# 2 \\ (\# 8 in \cite{Stover})} &\parbox[t]{5cm}{$\Delta_{18, b}$ ($1$ conjugacy classes) \\ $\Delta_{54}$ ($1$ conjugacy class)} \\
\hline
\end{tabular}
\end{center}

\medskip

Notice that the sum of the indices is always $72$, since the cusp associated with some $\Delta_i$ is $i$-to-$1$ over the unique cusp of the Picard modular surface and the total covering degree is $72$.


\begin{thebibliography}{ELMNPM}

\bibitem[AMRT10]{Ash} A. Ash, D. Mumford, M. Rapoport, Y.-S. Tai,
Smooth compactifications of locally symmetric varieties. Second
edition. Cambridge Mathematical Library. \textit{Cambridge
University Press, Cambridge}, 2010.

\bibitem[BdF07]{Bd07} G. Bagnera, M. de Franchis, Sur les surfaces hyperelliptiques. \textit{C. R. Acad. Sci.}, \textbf{145} 
(1907), 747--749.

\bibitem[BHPV04]{Bar} W. P. Barth, K. Hulek, C. A. Peters, A. Van de Ven \textit{Compact complex surfaces},
Ergebnisse der Mathematik und ihrer Grenzgebiete. 3. Folge. A Series
of Modern Surveys in Mathematics, 4. Springer-Verlag, Berlin, 2004.

\bibitem[BCP]{BCP} I. Bauer, F. Catanese, R. Pignatelli, Complex surfaces of general type: some recent progress.
\textit{Complex and differential geometry}, Springer Proc. Math. \textbf{8}, Springer, 2011, 1--48.

\bibitem[Bea96]{Bea} A. Beauville, Complex Algebraic Surfaces, Second Edition,
London Mathematical Society Students Texts, 34. \textit{Cambridge University Press, Cambridge}, 1996.

\bibitem[Bel14]{Belo} M. Belolipetsky, Hyperbolic orbifolds of small volume. International Congress of Mathematicians. Vol. II, 837--851, Kyung Moon Sa, 2014.

%\bibitem[BB66]{Borel1} W. L. Baily, A. Borel, Compactifications of arithmetic quotients of bounded symmetric domains,
%\textit{Ann. of Math.}, \textbf{84} (1966), no.2, 442-528.

\bibitem[BJ06]{Borel2} A. Borel, L. Ji, Compactifications of locally symmetric spaces. Mathematics: Theory \& Applications,
\textit{Birk\"auser Boston, Inc. Boston, MA}, 2006.

\bibitem[CS10]{Ste} D. I. Cartwright, T. Steger, Enumeration of the $50$ fake projective planes.
\textit{C. R. Math. Acad. Sci. Paris}, \textbf{258} (2010), no. 8,
2708--2713.

\bibitem[Deb99]{Deb} O. Debarre, \textit{Tores et vari\'et\'es ab\'eliennes complexes},
Cours Sp\'ecializ\'es, 6. Soci\'et\'e Math\'ematique de France,
Paris; EDP Sciences, Les Ulis, 1999.

\bibitem[DM93]{Deligne} P. Deligne, G. Mostow, Commensurabilities among lattices in $PU(1,n)$, Annals of Mathematics Studies 132, \textit{Princeton University Press, Princeton, NJ}, 1993.

\bibitem[DiC15]{DiCerbo2} G. Di Cerbo, L. F. Di Cerbo, Effective results for complex hyperbolic manifolds. \textit{J. London Math. Soc.}, \textbf{91} (2015), no. 1, 89--104.

\bibitem[DiC12]{Luca1} L. F. Di Cerbo, Finite-volume complex-hyperbolic surfaces, their toroidal compactifications, and geometric applications.
\textit{Pacific J. Math.}, \textbf{255} (2012), no. 2, 305--315.

\bibitem[DiC13]{DiC13} L. F. Di Cerbo, On the classification of toroidal compactifications with $3\overline{c}_{2}=\overline{c}^{2}_{1}$ and $\overline{c}_{2}=1$. arXiv:1309.5516v3[math.AG].

%\bibitem[DD15b]{Mumford+} G. Di Cerbo, L. F. Di Cerbo, On the canonical divisor of smooth toroidal compactifications %of complex hyperbolic manifolds. \textit{arXiv:1502.06258v4} [mathAG], \textit{Math. Res. Lett.} (to appear).

\bibitem[DS16]{DS15} L. F. Di Cerbo, M. Stover, Multiple realizations of varieties as ball quotient compactifications.
\textit{Michigan Math. J.} \textbf{65} (2016), no. 2, 441--447.

\bibitem[DS17]{DS16} L. F. Di Cerbo, M. Stover, Bielliptic ball quotient compactifications and lattices in $\mathrm{PU}(2,1)$ with finitely generated commutator subgroup.
\textit{Ann. Inst. Fourier}, \textbf{67} (2017), no. 1, 315--328.

\bibitem[FP06]{FP} E. Falbel and J. Parker, The geometry of the Eisenstein--Picard modular group.
\textit{Duke Math. J.}, \textbf{131} (2006), no. 2, 249--289.

\bibitem[Fri98]{Friedman} R. Friedman, \textit{Algebraic Surfaces and Holomorphic Vector Bundles},
Univesitext. Springer-Verlag, New York 1998.

\bibitem[GMM]{GMM} D. Gabai, R. Meyerhoff, P. Milley, Minimum volume cusped hyperbolic three-manifolds.
\textit{J. Amer. Math. Soc.} \textbf{22} (2009), no. 4, 1157--1215.

\bibitem[Gro82]{Gro1} M. Gromov, Volume and bounded cohomology.
\textit{Publ. Math. Inst. Hautes E\'tudes Sci.}, \textbf{56} (1982),
5--99.

\bibitem[Har77]{Har} R. Hartshorne, \textit{Algebraic Geometry}, Springer-Verlag, New York 1977, Graduate Texts In Mathematics, No. 52.

\bibitem[Hir84]{Hir84} F. Hirzebruch, Chern numbers of algebraic surfaces: an example. \textit{Math. Ann.}, \textbf{266} 
(1984), 351--356.

\bibitem[Hol86]{Holzapfel} R. P. Holzapfel, Chern numbers of algebraic surfaces--Hirzebruch's examples are Picard modular surfaces.
\textit{Math. Nachr.}, \textbf{126} (1986), 255--273.

\bibitem[Hol98]{HolzBook} R. P. Holzapfel, Ball and surface arithmetics. Aspects of Mathematics, \textbf{29}. Friedr. Vieweg \& Sohn, 1998.

\bibitem[Hol04]{Holz04} R. P. Holzapfel, Complex hyperbolic surfaces of Abelian type.
\textit{Serdica Math. J.} \textbf{30} (2004), no. 2-3, 207--238.

\bibitem[Hum98]{Hummel} C. Hummel, Rank one lattices whose parabolic isometries have no rotational part.
\textit{Proc. Am. Math. Soc.}, \textbf{126} (1998), 2453--2458.

\bibitem[Kaw78]{Kaw} Y. Kawamata, On deformations of compactifiable complex manifolds.
\textit{Math. Ann.}, \textbf{235} (1978), no. 3, 247--265.

\bibitem[Kli03]{Kli} B. Klingler, Sur la rigidit\'e de certains groupes foundamentaux, l'arithm\'eticit\'e des r\'eseaux hyperboliques
complexes, et le ``faux plans projectifs''. \textit{Invent. Math.},
\textbf{153} (2003), no. 1, 105--143.

\bibitem[Mag]{Magma} W. Bosma, J. Cannon, C. Playoust, The {M}agma algebra system. {I}. {T}he user language. \textit{J. Symbolic Comput.}, Computational algebra and number theory (London, 1993), \textbf{24} (1997), no. 3-4, 235--265.

%\bibitem[Mar84]{Margulis} G. Margulis, Arithmeticity of the irreducible lattices in semisimple groups of rank greater than 1.
%\textit{Invent. Math.} \textbf{76} (1984), 93-120.

\bibitem[Miy77]{Miyaoka} Y. Miyaoka, On the Chern numbers of surfaces of general type.
\textit{Invent. Math.}, \textbf{42} (1977), 225--237.

\bibitem[Mok12]{Mok} N. Mok, Projective algebraicity of minimal compactifications of complex-hyperbolic space forms of finite-volume.
Perspective in analysis, geometry, and topology, 331-354,
\emph{Prog. Math.,} \textbf{296}, \emph{Birkh\"auser/Springer, New
York}, 2012.

\bibitem[Mo08]{Momot} A. Momot, Irregular ball-quotient surfaces with non-positive Kodaira dimension.
\textit{Math. Res. Lett.} \textbf{15} (2008), no. 6, 1187--1195.

\bibitem[Mum77]{Mumford} D. Mumford, Hirzebruch's Proportionality Theorem in the Non-Compact Case.
\textit{Invent. Math.}, \textbf{42} (1977), 239--272.

\bibitem[Mum79]{Mum} D. Mumford, An Algebraic surface with $K$ ample, $(K^{2})=9$, $p_{g}=q=0$.
\textit{Amer. J. Math.}, \textbf{101} (1979), no.1, 233--244.

%\bibitem[Par98]{Parker} J. R. Parker, On the volume of cusped, complex hyperbolic manifolds and orbifolds,
%\textit{Duke Math. J.} \textbf{94} (1998), 433-464.

\bibitem[PY07]{Pra} G. Prasad, S.-K. Yeung, Fake projective planes.
\textit{Invent. Math.}, \textbf{168} (2007), no. 2, 321--370.

%\bibitem[R\'em04]{Rem} R. R\'emy, Covolume des groupes
%$S$-arithm\'etiques et faux plans projectifs [d'apr\`es Mumford,
%Prasad, Klingler, Yeung, Prasad-Yeung]. \textit{S\'eminare
%Bourbaki}, 60\`eme ann\'ee, 2007-2008, no. 984.

\bibitem[Sak80]{Sakai} F. Sakai, Semistable curves on algebraic surfaces and logarithmic pluricanonical maps.
\textit{Math. Ann.}, \textbf{254} (1980), no. 2, 89--120.

\bibitem[Ser90]{Ser90} F. Serrano, Divisors on Bielliptic Surfaces and Embeddings in $\pp^{4}$. \textit{Math. Z.}, \textbf{203} 
(1990), 527--533.

%\bibitem[SY82]{Siu} Y. T. Siu, S. T. Yau, Compactification of negatively curved complete K\"ahler manifolds of finite volume,
%\textit{Seminars in Differential Geometry}, pp. 363-380, Ann. of
%Math. Stud., Vol. 102, \textit{Princeton Univ. Press, Princeton, N.
%J.}, 1982.

\bibitem[Sto11]{Stover} M. Stover, Volumes of Picard modular surfaces.
\textit{Proc. Amer. Math. Soc.}, \textbf{139} (2011), no. 9,
3045--3056.

\bibitem[TY87]{TianY} G. Tian, S.-T. Yau, Existence of K\"ahler-Einstein metrics on complete K\"ahler manifolds
and their applications to algebraic geometry. \textit{Mathematical
aspects of string theory (San Diego, Calif., 1986)}, 574-628, Adv.
Ser. Math. Phys., 1, \emph{World Sci. Publishing, Singapore}, 1987.

\bibitem[Urz]{Urzua} G. Urz{\'u}a, Arrangements of curves and algebraic surfaces.
\textit{J. Algebraic Geom.}, \textbf{19} (2010), no. 2, 335--365.

\bibitem[Wan72]{Wan} H. C. Wang, \textit{Topics on totally discontinuous groups}, Symmetric Spaces, 459-487,
edited by W. B. Boothby and G. L. Weiss, Pure and Appl. Math. 8,
Marcel Dekker, New York, 1972.

\bibitem[Yau78]{Yau} S.-T. Yau, On the Ricci curvature of a compact K\"ahler manifold and the complex Monge-Amp\`ere equation.
\textit{Comm. Pure Appl. Math.}, \textbf{31} (1978), 339--411.

\bibitem[Yeu04]{Yeu} S.-K. Yeung, Integrality and arithmeticity of co-compact lattice corresponding
to certain complex two-ball quotients of Picard number one.
\textit{Asian J. Math.}, \textbf{8} (2004), no. 21, 107--129.

%\bibitem[Yeu13]{Yeung} S.-K. Yeung, Classification of surfaces of general type with Euler number 3.
%\textit{J. Reine Angew. Math.}, \textbf{679} (2013), 1--22.

\end{thebibliography}
\end{document}